\documentclass[11pt]{article}
\usepackage[T1]{fontenc}
\usepackage{lmodern}
\usepackage{amsmath,amssymb,amsthm,mathrsfs}
\usepackage[margin=0.94in]{geometry}
\usepackage[colorlinks=true,linkcolor=blue,citecolor=blue,urlcolor=blue]{hyperref}
\newtheorem{theorem}{Theorem}[section]
\newtheorem{proposition}[theorem]{Proposition}
\newtheorem{lemma}[theorem]{Lemma}
\newtheorem{corollary}[theorem]{Corollary}
\theoremstyle{definition}
\newtheorem{definition}[theorem]{Definition}
\theoremstyle{remark}
\newtheorem{remark}[theorem]{Remark}
\newcommand{\OC}{\mathcal O_C}
\newcommand{\OEv}{\mathcal O_E}
\newcommand{\Spa}{\operatorname{Spa}}

\newcommand{\Sh}{\operatorname{Sh}}

\newcommand{\an}{\mathrm{an}}
\newcommand{\HT}{\mathrm{HT}}
\newcommand{\Fl}{\mathscr F\!\ell}
\newcommand{\colim}{\mathop{\mathrm{colim}}}
\newcommand{\ilim}{\mathop{\mathrm{lim}}}
\newcommand{\eps}{\varepsilon}

\newcommand{\cS}{\mathscr S}
\newcommand{\cH}{\mathcal H}
\title{PERFECTOIDNESS OF PROPER SHIMURA VARIETIES\texorpdfstring{\\}{ }WITH LIMPID INTEGRAL MODELS}
\author{Ali Partofard}
\date{}
\begin{document}
\maketitle

\begin{abstract}
We prove that any proper Shimura variety admitting a limpid integral model in the sense of Madapusi Pera becomes a perfectoid space at infinite level at $p$. Our method provides a new geometric approach that circumvents the need for a universal abelian scheme, making it applicable beyond Shimura varieties of Hodge type. Inspired by Scholze's proof of the perfectoidness of the Siegel modular variety, we construct analogues of the canonical subgroup and the anticanonical tower within a strict neighborhood of the ordinary locus. To achieve this we utilize the geometry of the universal $G$-aperture over the limpid integral model. Because every Shimura variety admits a limpid integral model for sufficiently large primes, our result implies that all proper Shimura varieties are globally perfectoid at infinite level for big enough primes.
\end{abstract}

\section{Introduction}

Scholze \cite{Scholze2015} proved that the Siegel moduli space of abelian varieties, and more generally any Shimura variety of Hodge type, becomes a perfectoid space at infinite level at $p$. A crucial ingredient in Scholze's strategy is the existence of a universal abelian variety (and its associated $p$-divisible group), which allows for the explicit construction of the Hodge-Tate period map and the anticanonical tower. 

For general Shimura varieties—particularly those not of Hodge type—the absence of a universal abelian scheme poses a significant obstacle to establishing global perfectoidness at infinite level. Despite this obstruction on the global level, there has been major progress regarding the local geometry. Most notably, the recent work of T. He \cite{he2026perfectoidness} proves that the completed local rings (equivalently, the completed stalks) of general Shimura varieties at infinite level are perfectoid spaces.

Recent advances in the construction of integral models, specifically the theory of limpid integral models in the sense of Madapusi Pera ~\cite[Definition 1.2.2]{MY2026}, offer new tools to control the $p$-adic integral geometry of these spaces. The purpose of this article is to prove that the existence of such a limpid integral model provides exactly the right structure to deduce global perfectoidness at infinite level.

\begin{theorem}\label{thm:main-intro}
Let $E/\mathbb Q_p$ be the local reflex field, assumed unramified,
let $K^p$ be neat, and suppose that
$S=\Sh_{K^pK,E}^{\an}$ has a smooth proper limpid integral canonical
model $\cS/\OEv$ which is a scheme. For any complete algebraically closed extension $C/E$,
the full $p$-level diamond
\[
 \ilim_{K'\subset K}(\Sh_{K^pK',C}^{\an})^\diamond
\]
is represented by a perfectoid space over $C$.
\end{theorem}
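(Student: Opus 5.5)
The plan follows Scholze's strategy for the Siegel case, with the universal $p$-divisible group replaced by the universal $G$-aperture on the limpid model $\cS$. Write $\mathcal S_\infty=\ilim_{K'}(\Sh_{K^pK',C}^{\an})^\diamond$. There are four steps: build a perfectoid anticanonical tower over a strict neighbourhood of the ordinary locus; transport it by a Frobenius/Hecke-type correspondence to cover the whole space; deduce that $\mathcal S_\infty$ is perfectoid; and construct the Hodge--Tate period map $\pi_{\HT}:\mathcal S_\infty\to\Fl_{G,\mu}$, which is used in the globalization step. Perfectoidness is étale-local on the source and may be checked on an open cover, so it suffices to cover $\mathcal S_\infty$ by open subsets that are affinoid perfectoid.

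\textbf{Step 1 (canonical subgroup).} First I would use the universal $G$-aperture to define a Hasse-type invariant on the special fibre of $\cS$. Concretely, I would take the section of a power of the Hodge line bundle obtained from the Frobenius structure of the aperture along the $\mu$-filtration. Its non-vanishing locus is the ordinary ($\mu$-ordinary) locus, which is dense because $\cS$ is limpid and smooth. For $\epsilon$ small, let $S(\epsilon)\subset S$ be the strict neighbourhood where $|\mathrm{Ha}|\ge|p|^{\epsilon}$. On $S(\epsilon)$ I would construct a canonical reduction of the $p$-torsion (a parabolic level structure) using the Frobenius of the aperture. The recipe is the one Scholze uses for Lubin--Tate-type $p$-divisible groups: show that Frobenius is close to a nondegenerate map, and lift the ordinary-locus splitting by a Hensel-type or Banach fixed-point argument. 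The canonical subgroup is then the graded piece given by the slope decomposition.

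\textbf{Step 2 (anticanonical tower).} Next I would show that quotienting by an anticanonical level structure improves the Hasse invariant, i.e.\ maps $S(\epsilon)$ into $S(p\epsilon)$, in analogy with Katz--Lubin. Iterating this gives the anticanonical tower $S(\epsilon)_{\Gamma_0(p^\infty)}$. On integral models the transition maps reduce to relative Frobenius modulo $p^{1-\epsilon}$, so the limit is perfectoid; this is the standard criterion, applied via a limpid integral model of $\Gamma_0(p^n)$-level, which I would obtain as a normalization. Passing from $\Gamma_0(p^\infty)$ to full level adds pro-finite-étale extensions. Hence the preimage of $S(\epsilon)$ in $\mathcal S_\infty$ is affinoid perfectoid, using properness to control the boundary of the affinoids.

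\textbf{Steps 3--4 (globalization).} The period map comes from the $p$-adic étale local system attached to the aperture. Via the Hodge--Tate comparison on the proper smooth model, I would construct a $G(\mathbb Q_p)$-equivariant map $\pi_{\HT}:\mathcal S_\infty\to\Fl_{G,\mu}$, defined on diamonds; only the map is needed, not perfectoidness of its source. Step 2 shows that $\pi_{\HT}^{-1}(U)$ is affinoid perfectoid for a small neighbourhood $U$ of a point in the ordinary stratum (the ordinary points of the flag variety). The $G(\mathbb Q_p)$-translates of $U$ cover $\Fl_{G,\mu}$, since the ordinary points are a single open-dense orbit of a parabolic and their translates exhaust the flag variety. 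So $\mathcal S_\infty$ is covered by the affinoid perfectoid opens $\gamma\cdot\pi_{\HT}^{-1}(U)=\pi_{\HT}^{-1}(\gamma U)$. Since perfectoidness is local, $\mathcal S_\infty$ is a perfectoid space over $C$.

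\textbf{Main obstacle.} I expect the hardest part to be Step 1 together with the Frobenius estimate in Step 2, namely extracting a canonical subgroup and the "$\epsilon\mapsto p\epsilon$" improvement directly from the aperture without an abelian scheme. First I would try to prove a deformation-theoretic statement. It would say that over $\OC/p^{1-\epsilon}$, the Frobenius on the aperture of a point in $S(\epsilon)$ factors through the parabolic attached to $\mu$, up to an error of size $p^{\epsilon}$. This factorization would then be used to define the anticanonical isogeny locally and to compare it with the relative Frobenius of $\cS$.
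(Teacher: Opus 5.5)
Your overall architecture matches the paper's: Hasse neighbourhood, a section of the $p$-Hecke correspondence, a Frobenius-type anticanonical tower, full level, then $\pi_\HT$ and $G(\mathbb Q_p)$-translates. However, the two steps that carry the weight are not supplied.

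\textbf{The local analytic step.} In Steps 1--2 everything rests on a conjectural factorization of the aperture Frobenius through the parabolic up to $p^{\epsilon}$. Even granting it, you would still have to control the Hasse invariant at the other end $t(c(x))$ of the Hecke correspondence off the ordinary locus, and you give no mechanism for that. The aperture is only known to transform correctly along the Hecke correspondence on the ordinary tube, via $\varpi\circ\widehat\Phi^{\mathrm{ord}}_K=\widetilde\Phi\circ\varpi$.

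The missing idea is that no computation off the ordinary locus is needed. The paper takes the section of the finite \'etale map $\cH_1\to S$ over the ordinary tube $S(0)$ coming from the canonical Frobenius lift of \cite[Lemma 7.4.10]{MY2026}. It extends this section to some $S(r)$ by a soft idempotent-lifting argument (Lemma~\ref{lem:section}), and gets every estimate by continuity. On the ordinary tube $\Phi^*f-f^q\in p\widehat{A[1/h]}$, because the lift reduces to $q$-Frobenius. Clearing poles (Lemma~\ref{lem:poles}) upgrades this to $\Phi^*f-f^q\in p^{1-Nr}B_r$ on the models $B_r=A_C\langle u_r\rangle/(hu_r-p^r)$. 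This yields $|h(\Phi(x))|=|h(x)|^q$, finite \'etale maps $\Phi:S(r/q)\to S(r)$ of degree $q^n$, and transition maps that are $q$-Frobenius modulo $p^{1/2}$, with $q=p^d$ rather than $p$. Note also that it is the canonical quotient $\Phi$, not an anticanonical one, that sends $S(\epsilon)$ into $S(q\epsilon)$.

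No $\Gamma_0(p^n)$-level normalizations are needed, because the level-$m$ piece is identified with $S(\eps/q^m)$. You do need two things you skip. First, iterating the section only produces chains in the $m$-fold convolution of $\cH_1$, so one must show these are non-backtracking and land in $\cH_m$ (the building argument of Lemma~\ref{lem:straight-chains}). Second, one needs $Y_{m+1}=\rho_m^{-1}(Y_m)$ to reach full level.

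\textbf{The globalization step.} This is asserted rather than proved. Step 2 gives perfectoidness of $\mathcal N_\eps=\operatorname{pr}_K^{-1}(S(\eps)_C^\diamond)$, a neighbourhood defined by the Hasse invariant, and nothing in your proposal relates it to $\pi_\HT$. The claim that Step 2 makes $\pi_\HT^{-1}(U)$ perfectoid requires two further inputs:
\begin{itemize}
\item A comparison showing that a point whose period lies in the ordinary orbit $\Omega=G(\mathbb Q_p)f_0$ has $\mu$-ordinary reduction, including at higher-rank points. The paper proves this in Lemma~\ref{lem:ordinary-fibers}, via Breuil--Kisin--Fargues modules and the isocrystal of the canonical torus-valued ordinary aperture.
\item A compactness argument on the spectral space $|X|$, using spectrality of $\pi_\HT$. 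This turns $\pi_\HT^{-1}(\Omega)\subset\mathcal N_\eps$ into $\pi_\HT^{-1}(U)\subset\mathcal N_\eps$ for an open $U\supset\Omega$ (Lemma~\ref{lem:constructible-neighborhood}).
\end{itemize}

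Your reason for the covering is also wrong. The ordinary periods are not an open dense parabolic orbit; they form the compact orbit $\Omega\cong G(\mathbb Q_p)/P_\lambda(\mathbb Q_p)$, which has empty interior in $\Fl_C$. Translates of $U$ cover $\Fl_C$ for two reasons. The element $\lambda(p)^{-1}$ contracts the opposite big cell at $f_0$ into $U$. Zariski density of $G(\mathbb Q_p)$ then makes rational translates of that cell cover $\Fl_C$ (Lemma~\ref{lem:flag-cover}).
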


Crucially, every Shimura variety admits a limpid integral model for big enough primes \cite{MY2026}. This observation immediately elevates our main theorem into a completely general result for sufficiently large $p$:

\begin{corollary}
\label{cor:large_primes}
For any proper Shimura datum $(G, X)$, there exists a constant $C > 0$ such that for all primes $p > C$, the infinite-level Shimura variety $\mathcal{S}_{K^p, \infty}$ is a perfectoid space. 
\end{corollary}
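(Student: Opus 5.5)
The plan is to deduce Corollary~\ref{cor:large_primes} directly from Theorem~\ref{thm:main-intro} by verifying its hypotheses for all sufficiently large $p$. Fix a proper Shimura datum $(G,X)$ and a neat level $K^p$; we may shrink $K^p$, since perfectoidness descends along the finite étale covers induced by changing $K^p$ (the infinite-level spaces for different neat $K^p$ differ by finite étale maps of perfectoid spaces at the level of connected components). Let $E(G,X)$ be the reflex field. We exclude the finitely many primes ramified in $E(G,X)$, so that every local reflex field $E=E_v$ with $v\mid p$ is unramified over $\mathbb Q_p$. We also exclude the finitely many primes at which $G$ fails to be unramified. For the remaining $p$ we take $K=G(\mathbb Z_p)$ hyperspecial, coming from a reductive model over $\mathbb Z_{(p)}$ (after spreading out).

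The main input is \cite{MY2026}: for $p$ larger than a constant depending on $(G,X)$, the Shimura variety $\Sh_{K^pK}$ with hyperspecial $K$ admits a limpid integral canonical model $\cS$ over $\OEv$ which is smooth. It remains to check properness and representability by a scheme. Since $(G,X)$ is proper, $\Sh_{K^pK}$ is proper over $E$; equivalently $G^{\mathrm{ad}}$ is $\mathbb Q$-anisotropic. Properness of $\cS$ would follow from the valuative criterion. For this I would use the extension property of canonical models together with the absence of boundary: after excluding finitely many more primes, unipotent elements of $G(\mathbb Q)$ are trivial modulo the center, so no degeneration can occur. This is the analogue of Madapusi Pera's argument in the Hodge-type case. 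That $\cS$ is a scheme rather than an algebraic space follows from neatness of $K^p$ and quasi-projectivity of the generic fibre, plus the construction in \cite{MY2026}. Alternatively, a proper algebraic space over a DVR with projective generic fibre need not be a scheme, so I would rather appeal directly to the statement in \cite{MY2026}, possibly after enlarging $p$.

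I expect the main obstacle to be properness of $\cS$ in the non-Hodge-type case. Here I would first try the criterion that the reduction of a proper Shimura variety has no cusps. Concretely, I would extend points valued in the fraction field of a DVR using the $G$-aperture and local models. In that framework, non-extension would produce a nontrivial unipotent cocharacter of $G^{\mathrm{ad}}$ defined over $\mathbb Q$, which is impossible by anisotropy.

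Finally, I would apply Theorem~\ref{thm:main-intro} for each $C$, with $C$ equal to the completion of $\overline E$, and conclude that $\mathcal S_{K^p,\infty}=\ilim_{K'}(\Sh_{K^pK',C}^{\an})^\diamond$ is perfectoid. The constant $C$ in the corollary is the maximum of the bounds above, namely ramification, unramifiedness of $G$, the \cite{MY2026} bound, and the properness bound.
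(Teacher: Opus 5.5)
Your overall route is the paper's route. The paper obtains the corollary in one sentence, by combining Theorem~\ref{thm:main-intro} with the existence of limpid integral canonical models at hyperspecial level for all sufficiently large $p$ from \cite{MY2026}. Your handling of the unramifiedness of $E_v$ and the choice of hyperspecial $K$ is fine.

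The step you flag yourself, properness of $\cS$, is where the proposal breaks down. By the pointwise criterion of Definition~\ref{def:integral_canonical_model}, the valuative criterion for $\cS$ amounts to showing that $\mathrm{Et}_{K,p}|_x$ is potentially crystalline for every $x\in\Sh_K(F)$. A point that fails to extend gives you only local data: a non-potentially-crystalline representation of $\mathrm{Gal}(\overline F/F)$ valued in $G^c(\mathbb Z_p)$. There is no $G$-aperture over $\mathcal O_F$ to analyze, since $\varpi$ only sees points that already extend. Nothing in the aperture or local-model formalism turns such local data into a $\mathbb Q$-rational parabolic or a nontrivial unipotent element of $G$.

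In Hodge type this global passage is supplied by the abelian scheme, via semistable reduction and the rational weight filtration, or via the boundary of Madapusi Pera's toroidal compactifications. That is exactly the structure unavailable here. Moreover, $\mathbb Q$-anisotropy of $G^{\mathrm{ad}}$ is independent of $p$ and is equivalent to properness of $\Sh_K$ itself. So it is an input you already have, not a new ingredient controlling the special fibre. ``Excluding finitely many more primes'' contributes nothing, and a ``unipotent cocharacter'' is not a meaningful object.

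The same applies to representability of $\cS$ by a scheme, which, as you concede, does not follow from neatness and quasi-projectivity of the generic fibre. The paper does not establish either hypothesis. Its deduction silently reads ``smooth, proper, and a scheme'' into what \cite{MY2026} provides for large $p$.

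So the correct form of your argument imports properness and scheme-representability of the limpid model from a precise statement, rather than deriving properness from anisotropy. That statement should give a bound depending only on $(G,X)$, uniform in the prime-to-$p$ level; note that $K^p$ changes with $p$, so you cannot fix it before choosing $p$. As written, the properness step is a gap, shared with the paper, that your sketch does not close.
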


Our proof closely follows the strategy established by Scholze for the Siegel modular variety $\mathcal{A}_g$, but we should substitute the role of the universal abelian variety with the \textit{universal $G$-aperture}.

\subsection{Proof Strategy}

\medskip\noindent
\textbf{Canonical Lifts and the $p$-adic Hecke Correspondence.} 
Our starting point is a recent result of Madapusi and Youcis, who demonstrated the existence of a canonical lift of the Frobenius isogeny over the $\mu$-ordinary locus within the integral models of Shimura varieties. From a geometric perspective, this canonical Frobenius lift naturally equips us with a section of the $p$-adic Hecke correspondence restricted to the $\mu$-ordinary locus.

\medskip\noindent
\textbf{The Generalized Hesse Invariant and Canonical Subgroups.} 
To move beyond the strict $\mu$-ordinary locus, we utilize the generalized Hesse invariant. By carefully bounding the valuations of this invariant, we are able to analytically extend the aforementioned section of the Hecke correspondence to an open neighborhood of the ordinary locus in the generic fiber. Crucially, this extended section serves as the precise group-theoretic analogue of the classical canonical subgroup for abelian varieties. 

\medskip\noindent
\textbf{The Anticanonical Tower and Local Perfectoidness.} 
Equipped with this generalized canonical subgroup, we proceed to iteratively construct the anticanonical tower over the generic fiber of the Shimura variety. By analyzing the dynamics of the transition maps in this tower, we show that its inverse limit is a perfectoid space. We then invoke the almost purity theorem to transfer this structure back to the standard tower of Shimura varieties. This allows us to deduce that the actual limit of the Shimura variety at level $p^\infty$ is perfectoid over our open neighborhood of the ordinary locus.

\medskip\noindent
\textbf{Global Perfectoidness via the Hecke Action.} 
In the final step, we upgrade this local perfectoidness to the entire generic fiber. We achieve this by exploiting the natural Hecke action on the infinite-level Shimura variety in conjunction with the geometry of the associated flag variety. By translating our perfectoid open neighborhood via Hecke operators and analyzing the image under the Hodge--Tate period map, we sweep out the entire space, thereby establishing that the whole Shimura variety of level $p^\infty$ is globally a perfectoid space.
\section{Review of $G$-Apertures and the Ordinary Locus}

In this section, we review the theory of prismatic $F$-gauges and $G$-apertures developed by Madapusi and Youcis \cite{MY2026}, which serves as our primary replacement for the universal abelian scheme. These objects provide a purely group-theoretic approach to $p$-divisible groups with $G$-structure.

\subsection{Prismatic $F$-gauges and $(G,\mu)$-Apertures}

We begin by recalling the definition of prismatic $F$-gauges. Let $\mathfrak{X}$ be a derived $p$-adic formal scheme, and let $\mathfrak{X}^{\mathrm{syn}}$ denote its syntomification, a derived $p$-adic formal stack canonically associated with $\mathfrak{X}$\cite[Definition 6.1.1]{bhatt2022prismatic}. 

\begin{definition}[Prismatic $F$-gauges]
A vector bundle in prismatic $F$-gauges, or simply a \textbf{vector bundle $F$-gauge} over $\mathfrak{X}$, is a vector bundle over the formal stack $\mathfrak{X}^{\mathrm{syn}}$. 
\end{definition}

To define $G$-apertures, we fix the following setup.
\paragraph{Setup.} Let $G$ be a smooth connected affine group scheme over $\mathbb{Z}_p$, and let $\breve{\mathcal{O}}$ be the ring of integers in a finite unramified extension of $\mathbb{Q}_p$. Let $\mu \colon \mathbb{G}_m \to G_{\breve{\mathcal{O}}}$ be a $1$-bounded cocharacter (i.e., its weights on $\mathrm{Lie}(G)_{\breve{\mathcal{O}}}$ via the adjoint action are bounded above by 1). This $\mu$ defines a canonical $G$-torsor $Q_\mu$ over the classifying stack $B\mathbb{G}_{m, \breve{\mathcal{O}}}$.

\begin{definition}[$(G, \mu)$-apertures]
For a $p$-complete ring $R \in \mathrm{CRing}_{\breve{\mathcal{O}}/}^{p\text{-comp}}$, an \textbf{$n$-truncated $(G, \mu)$-aperture} over $R$ is a $G$-torsor $\mathscr{Q}$ over $R^{\mathrm{syn}} \otimes \mathbb{Z}/p^n\mathbb{Z}$ satisfying the following boundedness condition: For every geometric point $R \to \kappa$ of $\mathrm{Spf}(R)$, the restriction of the pullback $(x_{\mathrm{dR}}^{\mathcal{N}})^*\mathscr{Q}$ over $B\mathbb{G}_m \times \mathrm{Spec}(\kappa)$ is isomorphic to that of $Q_\mu$. 

These organize into a derived $p$-adic formal prestack denoted $\mathrm{BT}^{G,\mu}_n$. We define the stack of $(G,\mu)$-apertures as the inverse limit
\[
\mathrm{BT}^{G,\mu}_\infty = \varprojlim_n \mathrm{BT}^{G,\mu}_n.
\]
\end{definition}

The fundamental representability result for these stacks is the following theorem of Madapusi and Youcis.

\begin{theorem}[Representability, \cite{MY2026} Theorem 3.1.5]
The formal prestack $\mathrm{BT}^{G,\mu}_n$ is represented by a zero-dimensional connected quasi-compact smooth $p$-adic formal Artin stack over $\breve{\mathcal{O}}$ with affine diagonal. Furthermore, the transition maps $\mathrm{BT}^{G,\mu}_{n+1} \to \mathrm{BT}^{G,\mu}_n$ are smooth and surjective.
\end{theorem}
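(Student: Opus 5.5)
The plan is to realize $\mathrm{BT}^{G,\mu}_n$ as a quotient stack $[X_n/\mathcal G_n]$, with $X_n$ a smooth affine formal scheme over $\breve{\mathcal O}$ and $\mathcal G_n$ a smooth affine group acting on it, and to read off every property in the statement from this presentation.

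\textbf{Step 1: local charts.} For the chart we use the explicit description of $n$-truncated $(G,\mu)$-apertures over semiperfectoid (or quasisyntomic) rings via Frobenius-twisted $G$-torsors on the Nygaard filtration. On the quasisyntomic site, an $n$-truncated aperture is the following datum: a $G$-torsor over $W_n$ with a $\mu$-filtration, i.e.\ a reduction to the display group $G_\mu^{\mathrm{disp}}$ of $\mu$-filtered points, together with a Frobenius-semilinear identification. Trivializing the torsor, this datum becomes an element $g\in L^{(n)}G(R)$, taken up to the action $g\mapsto h^{-1}g\,\sigma_\mu(h)$ of $h$ in a display group $L^{(n)}_\mu G$. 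The boundedness condition at geometric points says that the Hodge filtration is of type $\mu$; this is an open and closed condition, and it singles out the connected component of $\mu$.

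\textbf{Step 2: smoothness and dimension zero.} By Step 1 we get $\mathrm{BT}^{G,\mu}_n\simeq [L^{(n)}G/L^{(n)}_\mu G]$ via the $\sigma_\mu$-conjugation action. Both groups are smooth affine over $\breve{\mathcal O}$: they are Weil restrictions along truncated Witt vectors of smooth groups, intersected with filtration conditions. Since $\mu$ is $1$-bounded, the filtration is governed by $\mathrm{Lie}$-weights $\le 1$. This is exactly the condition that makes the quotient a derived stack with no higher cohomology, so it is classical and smooth, and it makes $\dim L^{(n)}G=\dim L^{(n)}_\mu G$. Hence the stack has relative dimension $0$. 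Smoothness should then be double-checked deformation-theoretically: the cotangent complex of $\mathrm{BT}^{G,\mu}_n$ is controlled by the graded pieces $\mathrm{Lie}(G)_{\ge}$, which vanish in degree $\ge 2$ precisely by $1$-boundedness.

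\textbf{Step 3: the remaining properties.} Quasi-compactness is immediate from the presentation. The diagonal is affine because it is a torsor under the stabilizer group schemes inside the affine group $L^{(n)}_\mu G$. Connectedness follows because $L^{(n)}G$ is connected, as $G$ is connected and the Witt kernels are unipotent. For the transition maps $\mathrm{BT}_{n+1}\to\mathrm{BT}_n$, we use the compatible surjections $L^{(n+1)}G\to L^{(n)}G$ and $L^{(n+1)}_\mu G\to L^{(n)}_\mu G$. Their kernels are vector groups built from $\mathrm{Lie}(G)$ and its filtration. The induced map of quotient stacks is then a gerbe or torsor under smooth vector-group data, so it is smooth and surjective.

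\textbf{Main obstacle.} The hard part is Step 1: showing that the syntomification definition agrees with the explicit display-group quotient. The difficulty is twofold. First, one has to prove that $G$-torsors on $R^{\mathrm{syn}}\otimes\mathbb Z/p^n$ satisfying the boundedness condition are étale- or quasisyntomic-locally trivial, in a form compatible with the Nygaard filtration. Second, one has to prove that descent from quasisyntomic covers works for these stacks. To attack it, I would first treat qrsp rings, where $R^{\mathrm{syn}}$ is explicit via the Nygaard-filtered prism. I would then use $1$-boundedness to reduce the torsor data to filtered modules of Tate-twist amplitude $[0,1]$, where the comparison is known from the $\mathrm{GL}_n$ case, i.e.\ Barsotti--Tate groups. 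The general $G$ case would follow by Tannakian reduction.
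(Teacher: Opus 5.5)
The paper gives no proof of this statement; it quotes it from Madapusi--Youcis. Your argument therefore has to stand on its own, and it has a genuine gap in Step~1.

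The identification $\mathrm{BT}^{G,\mu}_n\simeq[L^{(n)}G/L^{(n)}_\mu G]$ with the Witt-vector display quotient is not merely hard; it is false. The two stacks agree on perfect rings, where the prism is $W(R)$ with the $p$-adic Nygaard filtration, and hence on geometric points. But apertures carry infinitesimal automorphisms that truncated Witt vectors do not see. The paper records this itself in the next cited result: $\mathrm{BT}^{G,\mu}_1\otimes k\to G\text{-}\mathrm{zip}_\mu$ is a gerbe for a nontrivial connected finite flat group scheme. For $n=1$ over $k$, your quotient is, up to conventions, exactly the zip stack $[E\backslash G]$, since the level-one display group maps isomorphically onto $E$.

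Here is a concrete instance. Take $G=\mathrm{GL}_2$ with $\mu$ of type $(1,0)$, so that $1$-truncated apertures are truncated Barsotti--Tate groups of height $2$ and dimension $1$. The ordinary point $\mu_p\times\mathbb Z/p$ over $\bar{\mathbb F}_p$ has automorphism group scheme containing $\mathrm{Hom}(\mathbb Z/p,\mu_p)\cong\mu_p$. By contrast, the stabilizer of the ordinary point $1$ in $[E\backslash G]$ is $\{t\in T:\ \phi(t)=t\}=T(\mathbb F_p)$, which is \'etale. So the $\mathrm{GL}_h$ case you propose to reduce to is precisely where the comparison is known to fail: the truncated display functor from truncated Barsotti--Tate groups to truncated displays is smooth, but it is not an equivalence.

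Your fallback does not close the gap. Over a qrsp ring the syntomification is governed by $\Delta_R/p^n$ and its Nygaard filtration, not by $W_n(R)$. Sheafifying the resulting $\sigma_\mu$-conjugation quotient does not give the Witt-vector quotient; the difference is exactly the gerbe above, and Tannakian reduction to $\mathrm{GL}_h$ cannot remove it. Consequently Steps~2 and~3 prove smoothness, dimension zero, quasi-compactness, affine diagonal and the transition-map properties for the display stack, not for $\mathrm{BT}^{G,\mu}_n$.

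What is missing is a way to control this discrepancy, or to bypass the Witt-vector model altogether. Two possible routes:
\begin{itemize}
\item Show that the display map $\mathrm{BT}^{G,\mu}_n\to[L^{(n)}G/L^{(n)}_\mu G]$ is a gerbe banded by a finite flat commutative group scheme $H$. Then smoothness, dimension $0$, quasi-compactness, connectedness and affine diagonal transfer, because $BH$ is smooth of dimension $0$ with affine diagonal.
\item Prove algebraicity directly by a representability theorem, computing the cotangent complex from the weights of $\mu$ on $\mathrm{Lie}(G)$. This is where your $1$-boundedness heuristic genuinely belongs.
\end{itemize}
In either case, smooth surjectivity of $\mathrm{BT}^{G,\mu}_{n+1}\to\mathrm{BT}^{G,\mu}_n$ needs its own argument, either deformation-theoretic or by comparing the bands at levels $n$ and $n+1$. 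It does not follow from the corresponding statement for the display quotients.
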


In characteristic $p$, $1$-truncated apertures are closely related to the theory of $G$-zips. Let $k$ be the residue field of $\breve{\mathcal{O}}$. 

\begin{theorem}[Apertures to $G$-zips, \cite{MY2026} Proposition 3.5.5]
There is a natural smooth surjective map 
\[
\mathrm{BT}^{G,\mu}_1 \otimes_{\breve{\mathcal{O}}} k \to G\text{-}\mathrm{zip}_\mu
\]
of $0$-dimensional Artin stacks over $k$ that is a gerbe for a connected $p$-torsion finite flat commutative group scheme.
\end{theorem}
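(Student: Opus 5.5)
The statement to prove is the last one quoted: there is a natural smooth surjective map $\mathrm{BT}^{G,\mu}_1 \otimes_{\breve{\mathcal{O}}} k \to G\text{-}\mathrm{zip}_\mu$, and it is a gerbe for a connected $p$-torsion finite flat commutative group scheme.

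\textbf{Constructing the map.} I would work over an $\mathbb{F}_p$-algebra $R$ and restrict a $1$-truncated aperture $\mathscr Q$ on $R^{\mathrm{syn}}\otimes\mathbb Z/p$ to two loci.
\begin{itemize}
\item On the de Rham (Hodge-filtered) locus $R^{\mathcal N}_{\mathrm{dR}}$, which maps to $B\mathbb G_m\times\operatorname{Spec} R$ in the reduced situation, the boundedness condition says that the pullback is étale-locally isomorphic to $Q_\mu$. So the graded pieces give a $P_\mu^-$-torsor (Hodge filtration).
\item On the conjugate side, via the Frobenius twist $\sigma$, one gets a $P_\mu^{+,(p)}$-torsor.
\end{itemize}
The gluing of $R^{\mathrm{syn}}$ (identifying the two open loci of the filtered prismatization via $j_{\mathrm{dR}}$ and $j_{\mathrm{HT}}$) supplies the isomorphism of the Levi quotients twisted by Frobenius. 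This is exactly the datum of a $G$-zip of type $\mu$ in the Pink–Wedhorn–Ziegler sense, and the construction is functorial in $R$.

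\textbf{Reducing to a point.} To prove smoothness, surjectivity, and the gerbe property, I would reduce, via smooth-locality and the presentation of $\mathrm{BT}_1^{G,\mu}$ from the Representability theorem, to the following. Étale-locally, trivialize $\mathscr Q$ on the Hodge-filtered stack. Then $\mathrm{BT}_1^{G,\mu}\otimes k$ is a quotient $[G/\mathcal{G}_1]$ by an "display group" $\mathcal{G}_1$ built from $(G,\mu)$ and the Witt-vector-like ring $W_2$-structure of the Nygaard filtration mod $p$. The zip stack is $[G/E_{\mathcal Z}]$, with $E_{\mathcal Z}\subset P^-\times P^{+,(p)}$ the zip group.

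The natural map is then induced by a surjective homomorphism $\mathcal G_1\to E_{\mathcal Z}$ and the identity on $G$. Its kernel $K$ comes from the difference between mod-$p$ Nygaard-filtered data and its associated graded. Concretely, $K$ is the Frobenius kernel of the unipotent radical, $U_\mu^{-}$ twisted by $\alpha_p$/$W[F]$ in weights $\le 1$.

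\textbf{Properties of the kernel.} Because $\mu$ is $1$-bounded, $U^-_\mu$ is commutative (vector group $\mathrm{Lie}$), so $K\simeq \mathrm{Lie}(U^-_\mu)\otimes\alpha_p$-type. It is therefore finite flat, commutative, $p$-torsion, and connected (infinitesimal). Then:
\begin{itemize}
\item the map $[G/\mathcal G_1]\to[G/E_{\mathcal Z}]$ is a $BK$-gerbe;
\item it is surjective since $\mathcal G_1\to E_{\mathcal Z}$ is fppf surjective;
\item it is smooth, since $BK$ of a finite flat group is smooth (it is a quotient of a point by a flat group).
\end{itemize}

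\textbf{Main obstacle.} The hard step is the explicit identification of the kernel of $\mathcal G_1\to E_{\mathcal Z}$, which requires a concrete description of $\mathbb F_p^{\mathrm{syn}}$-type stacks (the Nygaard-filtered $W$-modules mod $p$). I would import the Drinfeld/Bhatt–Lurie description of $R^{\mathrm{syn}}$ for semiperfect, or quasi-syntomic covers, and compute there first.
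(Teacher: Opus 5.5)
The paper gives no argument here; it imports the statement from Madapusi--Youcis. Your proposal therefore has to stand on its own, and its core step does not. Everything after the construction of the map rests on a presentation $\mathrm{BT}^{G,\mu}_1\otimes k\simeq[G/\mathcal G_1]$ with an fppf surjection $\mathcal G_1\to E_{\mathcal Z}$ whose kernel $K$ acts trivially on $G$. You assert this, and flag it as the main obstacle, but never construct it. In the form you state, it cannot exist. Such a presentation would make the band of the gerbe the same group $K$ at every geometric point of $G\text{-}\mathrm{zip}_\mu$, and your identification $K\simeq\mathrm{Lie}(U^-_\mu)\otimes\alpha_p$ would make it unipotent everywhere.

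Test this on $G=\mathrm{GL}_2$ with $\mu$ of type $(1,0)$. There $\mathrm{BT}^{G,\mu}_1\otimes k$ is the stack of truncated Barsotti--Tate groups of level $1$, height $2$, dimension $1$, and the map sends $X$ to its Dieudonn\'e $F$-zip.
\begin{itemize}
\item At the ordinary point $X=\mu_p\times\mathbb Z/p$ one has $\underline{\mathrm{Aut}}(X)=(\mathbb F_p^\times)^2\ltimes\underline{\mathrm{Hom}}(\mathbb Z/p,\mu_p)$. The stabilizer of the open zip orbit is the \'etale group $T(\mathbb F_p)$, so the infinitesimal factor $\underline{\mathrm{Hom}}(\mathbb Z/p,\mu_p)=\mu_p$ acts trivially on the $F$-zip. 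It is exactly the band there, and it is of multiplicative type.
\item At the supersingular point, $\underline{\mathrm{Aut}}(X)^0\simeq\mathbb G_a$, given by the automorphisms $1+\phi$ with $\phi$ factoring through $X/X[F]\simeq\alpha_p\to X[F]$. The band there is $\alpha_p$.
\end{itemize}
So the band is a genuinely non-constant finite flat commutative group scheme over $G\text{-}\mathrm{zip}_\mu$ (Drinfeld's ``Lau group scheme''). No quotient presentation with a constant kernel, yours included, can produce it.

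What is missing is an argument that actually works with torsors on the non-classical stack $R^{\mathrm{syn}}\otimes\mathbb Z/p$. You need three things:
\begin{itemize}
\item every $G$-zip lifts fppf-locally to a $1$-truncated aperture;
\item any two lifts are fppf-locally isomorphic;
\item the relative automorphism group scheme is finite flat, commutative, $p$-torsion and infinitesimal.
\end{itemize}
This is a substantive theorem, the gerbe statement conjectured by Drinfeld for $\mathrm{BT}^{G,\mu}_1\otimes\mathbb F_p$. It comes from deformation theory of apertures, via the relative cotangent complex over $G\text{-}\mathrm{zip}_\mu$ and a computation of relative automorphisms, not from a formal group-quotient argument. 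In particular, connectedness of the band cannot be deduced from its being a form of $\alpha_p$, since in general it is not one.

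Two smaller points:
\begin{itemize}
\item Your construction of the map is reasonable in spirit, but the Hodge filtration comes from restricting to the filtered stack $\mathbb A^1/\mathbb G_m\times\operatorname{Spec}R$. Restriction to $B\mathbb G_m$ gives only graded pieces, i.e.\ a Levi torsor.
\item $BK$ is smooth, but not because it is a quotient of a point: the atlas $\operatorname{Spec}k\to BK$ is a $K$-torsor, hence flat but not smooth for infinitesimal $K$. It is smooth because $K$ embeds into a smooth group $H$ with $H/K$ smooth.
\end{itemize}
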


\subsection{The $\mu$-Ordinary Locus and Canonical Lifts}

\paragraph{Setup} Let $G$ be a reductive group scheme over $\mathbb{Z}_p$. Choose a maximal torus $T \subset G$ contained in a Borel subgroup of $G$. We may assume $\mu$ factors through $T_{\breve{\mathcal{O}}}$. Suppose $[\breve{\mathcal{O}}[1/p] : \mathbb{Q}_p] = d$, and let $\nu = \sum_{i=0}^{d-1} \varphi^i(\mu)$ be the associated cocharacter of $T$ defined over $\mathbb{Z}_p$. Let $P^+_\nu \subset G$ be the parabolic subgroup whose Lie algebra is the sum of the non-negative eigenspaces for $\nu$, and let $M_\nu$ be its Levi quotient.

\begin{definition}[The $\mu$-ordinary stratum]
Let $G\text{-}\mathrm{zip}_\mu^{\mathrm{ord}}$ denote the unique open dense stratum of $G\text{-}\mathrm{zip}_\mu$ associated with the maximal element in the refined Bruhat order of the Weyl group. For $n \in \mathbb{N} \cup \{\infty\}$, we define the \textbf{$\mu$-ordinary locus} $\mathrm{BT}^{G,\mu,\mathrm{ord}}_n \subset \mathrm{BT}^{G,\mu}_n$ as the unique open formal substack characterized by the property that its mod-$p$ fiber is
\[
\mathrm{BT}^{G,\mu,\mathrm{ord}}_n \otimes k = (\mathrm{BT}^{G,\mu}_n \otimes k) \times_{G\text{-}\mathrm{zip}_\mu} G\text{-}\mathrm{zip}_\mu^{\mathrm{ord}}.
\]
\end{definition}

A crucial feature of the $\mu$-ordinary locus is that it admits a canonical lift of the Frobenius endomorphism, descending from the parabolic subgroup $P^+_\nu$. Set $q_0 = p^d = |k|$.

\begin{proposition}[Canonical Frobenius lifting, \cite{MY2026} Proposition 3.6.11]
\label{prop:frob_lift}
There is a canonical endomorphism 
\[
\widetilde{\Phi} \colon \mathrm{BT}^{P^+_\nu, \mu}_n \to \mathrm{BT}^{P^+_\nu, \mu}_n
\]
of formal $\breve{\mathcal{O}}$-stacks lifting the $q_0$-Frobenius endomorphism of $\mathrm{BT}^{P^+_\nu, \mu}_n \otimes k(v)$.
\end{proposition}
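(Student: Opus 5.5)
The plan is to build $\widetilde{\Phi}$ from a Frobenius-twisting operation on $P^+_\nu$-torsors over the syntomification, using the fact that $\nu$ is $\varphi$-stable and defined over $\mathbb{Z}_p$. I will do this in three steps.

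\textbf{Step 1: a Frobenius twist of the pair $(P^+_\nu,\mu)$.} Because $\nu=\sum_{i=0}^{d-1}\varphi^i(\mu)$ is $\varphi$-invariant and defined over $\mathbb{Z}_p$, the parabolic $P^+_\nu$ and its Levi quotient $M_\nu$ are defined over $\mathbb{Z}_p$. Since $T$ commutes with $\nu$, the cocharacter $\mu$ factors through $M_\nu$. On $M_\nu$ it is central up to the $\varphi$-action: the $\nu$-grading makes $\mu$ and all its Frobenius twists land in the centre of $M_\nu$ after composing with the projection.

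\textbf{Step 2: a rigid endomorphism of the Levi part.} On $\mathrm{BT}^{M_\nu,\mu}_n$ I expect apertures to be \emph{étale-like}: locally they are $M_\nu$-torsors twisted by a central cocharacter. Such torsors correspond to $\breve{\mathcal O}$-local systems, via the comparison of $F$-gauges with Frobenius-fixed data for central $\mu$. On these objects the relative $q_0$-Frobenius, i.e.\ $d$-fold iteration of the semilinear Frobenius of the syntomification, acts through a canonical isomorphism of stacks. That isomorphism gives a lift $\widetilde\Phi_M$ of the $q_0$-Frobenius, because the stack $\mathrm{BT}^{M_\nu,\mu}_n$ is formally étale over a classifying stack of a finite étale group, and Frobenius lifts uniquely along étale maps.

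\textbf{Step 3: the unipotent part.} Next I lift $\widetilde\Phi_M$ along $\mathrm{BT}^{P^+_\nu,\mu}_n\to \mathrm{BT}^{M_\nu,\mu}_n$. The fibres of this map are controlled by the unipotent radical $U$, whose Lie algebra has positive $\nu$-weights. I plan to define $\widetilde\Phi$ on a $P^+_\nu$-aperture $\mathscr Q$ by pushing it out along the endomorphism of $P^+_\nu$ given by conjugation by $\nu(p)$. This map contracts $U$ by positive powers of $p$, and combined with the Frobenius pullback on the $M_\nu$-part it gives a map of formal stacks. To show this reduces to $q_0$-Frobenius mod $p$, I will check the action on the $U$-filtration graded pieces. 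There the positive weights force the $p$-multiplication to vanish modulo $p$ exactly where Frobenius kills differentials, so both maps agree on $k(v)$-points and on tangent complexes.

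\textbf{Main obstacle.} I expect the hardest part to be the precise compatibility in Step 3: showing that the $\nu(p)$-twisted pushout preserves the boundedness condition at $x_{\mathrm{dR}}^{\mathcal N}$. Concretely, the Hodge type must remain $\mu$ rather than becoming $\varphi^d(\mu)$ shifted. This should follow from $\varphi^d(\mu)=\mu$ over $\breve{\mathcal O}$, since $\breve{\mathcal O}$ has degree $d$. Canonicity then follows from uniqueness of Frobenius lifts on the formally étale Levi part, together with functoriality of the $U$-contraction.
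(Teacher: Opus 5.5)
The paper gives no proof of this statement; it imports it from Madapusi--Youcis. So your sketch has to stand on its own, and it does not yet.

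Your construction in Step 3 is the right one: it is what Remark~\ref{rem:analytic_frob} predicts on \'etale realizations, up to the sign convention for $\nu$. But you have misplaced the difficulty. Conjugation $c_\nu$ by $\nu(p)$ is an endomorphism of the $\mathbb Z_p$-group scheme $P^+_\nu$, and it is the identity on $M_\nu$. The cocharacter $\mu$ is central in $M_\nu$, but only because $\mu$ is chosen dominant for a $\mathbb Z_p$-rational Borel, so that $\langle\alpha,\nu\rangle=\sum_i\langle\alpha,\varphi^i\mu\rangle$ has no cancellation. That is the real reason in Step 1, not the $\nu$-grading. Consequently $c_{\nu,*}Q_\mu=Q_{c_\nu\circ\mu}=Q_\mu$, so the boundedness condition is automatic; $\varphi^d(\mu)=\mu$ only matters for $F^{d*}$. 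Also, ``combining with Frobenius pullback on the $M_\nu$-part'' is not a defined operation on a non-split $P^+_\nu$-torsor. Nor is it needed: after untwisting by the Lubin--Tate torus aperture, the Levi stack classifies torsors under a constant finite \'etale group, so $F^{d*}$ on it is canonically the identity. Thus $\widetilde\Phi$ should simply be $c_{\nu,*}$.

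The genuine gap is the lifting property itself. You need a natural isomorphism $c_{\nu,*}\mathscr Q\simeq F_R^{d*}\mathscr Q$ for $P^+_\nu$-apertures over $k$-algebras $R$, and your argument for it fails on two counts.

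\begin{itemize}
\item Positive weights do not make the twist vanish modulo $p$. The torsor $\mathscr Q$ lives on $R^{\mathrm{syn}}\otimes\mathbb Z/p^n$, where $p\neq0$ for $n\geq2$ even when $pR=0$. In the ordinary $p$-divisible group case, $c_{\nu,*}$ is $G\mapsto G/G^{\mu}[p]$: it pushes the class in $\mathrm{Ext}^1(G^{\mathrm{et}},G^{\mu})$ forward along $p$, which is not zero.
\item Agreement on $k$-points and tangent complexes does not determine a morphism of stacks. Over $k$ the ordinary locus is a single point, so $\mathrm{BT}^{P^+_\nu,\mu}_n\otimes k\simeq B\mathcal A_n$. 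Here $\mathcal A_n$ is the automorphism group scheme of the standard ordinary aperture $\mathscr Q_0$; it is finite flat and in general non-reduced. An endomorphism of $B\mathcal A_n$ amounts to an endomorphism of $\mathcal A_n$ up to conjugation, and the $q_0$-Frobenius corresponds to $F_{\mathcal A_n/k}$. Agreement on points is therefore vacuous. Tangent data see only $\mathrm{Lie}\,\mathcal A_n$, which misses the \'etale quotient and the higher Frobenius kernels.
\end{itemize}

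What is missing is the analogue of $G^{(p)}\simeq G/G[F]$ with $G[F]=G^{\mu}[p]$. Its content is that in characteristic $p$, after the canonical identifications on graded pieces, $F^{d*}$ acts on unipotent extension data of $\nu$-weight $j$ through multiplication by $p^j$. For Kummer classes this is $u\mapsto u^p$. This must be proved as an isomorphism of torsors, functorial in $R$. Equivalently, exhibit an isomorphism $c_{\nu,*}\mathscr Q_0\simeq\mathscr Q_0$ under which the endomorphism of $\mathcal A_n$ induced by $c_\nu$ equals $F_{\mathcal A_n/k}$ up to conjugation.
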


The connection between the moduli of $P^+_\nu$-apertures and the ordinary locus of $G$-apertures is given by the following result, which shows that over the ordinary locus, the $G$-structure admits a canonical reduction to $P^+_\nu$.

\begin{proposition}[Reduction to parabolic, \cite{MY2026} Proposition 3.6.12]
\label{prop:ord_iso}
For $n \in \mathbb{N}$, the natural map $\mathrm{BT}^{P^+_\nu, \mu}_n \to \mathrm{BT}^{G,\mu}_n$ induces an isomorphism
\[
\mathrm{BT}^{P^+_\nu, \mu}_n \xrightarrow{\sim} \mathrm{BT}^{G,\mu,\mathrm{ord}}_n.
\]
\end{proposition}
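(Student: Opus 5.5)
The plan is to show that $\mathrm{BT}^{P^+_\nu,\mu}_n \to \mathrm{BT}^{G,\mu}_n$ is an open immersion whose image is $\mathrm{BT}^{G,\mu,\mathrm{ord}}_n$. Both stacks are smooth $p$-adic formal Artin stacks: for $P^+_\nu$ this uses the representability theorem for the smooth affine group $P^+_\nu$, for which $\mu$ is still $1$-bounded because $\mu$ factors through $T\subset P^+_\nu$. It therefore suffices to prove three things: the map is étale (formally étale, plus locally of finite presentation, which is automatic), it is a monomorphism, and its image is the ordinary open. For smooth formal stacks, an étale monomorphism is an open immersion, and set-theoretic surjectivity onto $\mathrm{BT}^{G,\mu,\mathrm{ord}}_n$ then finishes the argument. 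Since everything is $p$-adically complete and both sides are flat over $\breve{\mathcal O}$, I will check fiberwise over $k$ wherever possible and deduce the integral statement by deformation theory.

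\textbf{Step 1: points and image.} First I verify the statement on geometric points. Over an algebraically closed $\kappa$, a $(G,\mu)$-aperture is determined up to isomorphism by a $\sigma$-conjugacy class $[b]$ with the Hodge bound $\mu$. The $\mu$-ordinary ones are exactly those whose image in $G\text{-}\mathrm{zip}_\mu$ lies in the open stratum, and these coincide with the basic-for-$M_\nu$ class of $\mu$. The Newton cocharacter is then $\nu/d$, and its slope filtration gives a reduction to $P^+_\nu$; conversely, the image of any $P^+_\nu$-aperture has Newton point $\nu/d$, hence is ordinary. This gives the image statement, and it also gives bijectivity on isomorphism classes of $\kappa$-points. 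For bijectivity I use that the reduction is unique because the $P^+_\nu$-reduction of an isocrystal with Newton point $\nu$ is the canonical slope filtration.

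\textbf{Step 2: deformation theory.} Next I compare tangent and obstruction complexes. The cotangent complex of $\mathrm{BT}^{G,\mu}_n$ at a point $\mathscr Q$ is computed by the syntomic cohomology of the adjoint $F$-gauge $\mathrm{Lie}(G)_{\mathscr Q}$, truncated mod $p^n$ and filtered by $\mu$. The map from $P^+_\nu$ contributes the cofiber $\mathrm{Lie}(G)/\mathrm{Lie}(P^+_\nu)$, which consists of the strictly negative $\nu$-weights. On this quotient the $F$-gauge has all Frobenius slopes negative; it is "étale of negative slope". Its syntomic cohomology ($\mathrm{fib}(\mathrm{Fil}^0 \xrightarrow{\varphi-1}\cdot)$ over $\kappa$ or an Artinian thickening) therefore vanishes in degrees $0$ and $1$, since $\varphi-1$ is bijective when $\varphi$ is topologically nilpotent or invertible appropriately on $W(\kappa)$-lattices. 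Consequently the relative cotangent complex vanishes, which gives étaleness and the relative monomorphism property on automorphism groups simultaneously. Vanishing of $H^0$ makes automorphisms agree, which yields a monomorphism; vanishing of $H^1$ makes deformations agree, which yields formal étaleness.

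\textbf{Step 3: conclusion.} Combining the steps, the map is étale, universally injective, and a monomorphism on geometric points together with their automorphism groups. By Steps 1–2 it is therefore an open immersion with image $\mathrm{BT}^{G,\mu,\mathrm{ord}}_n$. For the passage from infinitesimal to global I would use the finite-presentation part of representability.

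\textbf{Main obstacle.} The hard step is Step 2: making precise the weight and slope decomposition of $\mathrm{Lie}(G)_{\mathscr Q}$ as an $F$-gauge over general (non-perfect) bases mod $p^n$, and proving the vanishing of the relevant syntomic cohomology there. My first attempt would be to reduce to perfect points using the smoothness of both stacks. On a perfect $\kappa$ I would then use the explicit description of $\mathrm{BT}_n$ via $G(W_n(\kappa))$ modulo the display-group action, so that the cofiber becomes a concrete $\sigma$-linear map $X\mapsto X - \mathrm{Ad}(b)\sigma(X)$ on the negative-weight part. On that part $\mathrm{Ad}(b)\sigma$ is divisible by $p$, so the map is bijective by successive approximation.
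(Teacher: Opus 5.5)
The paper gives no argument here; it simply imports the statement from \cite{MY2026}. Your architecture (an \'etale monomorphism whose image is then identified on geometric points) is reasonable. However, Step~2, which carries all the weight, does not work as written.

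First, the tangent complex of $\mathrm{BT}^{G,\mu}_n$ at $\mathscr Q$ is not the syntomic cohomology of $\mathrm{Lie}(G)_{\mathscr Q}$ over the fixed base. The latter is a complex of $\mathbb Z_p$-modules computing maps and extensions of gauges (for $G=\mathrm{GL}_h$ its $H^0$ is the endomorphism ring of the gauge). Tangent directions instead come from square-zero thickenings of the base, and they are governed, Grothendieck--Messing style, by $\mathrm{Lie}(G)$ modulo the Hodge filtration. So ``$H^1=0$ gives formal \'etaleness'' is not a valid inference.

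Second, and decisively, the planned reduction to perfect points cannot detect \'etaleness or the monomorphism property at all. Perfect rings carry no tangent directions and do not see non-reduced stabilizers. Test your argument on the opposite parabolic $P^-_\nu$. For $\mathrm{GL}_h$, an ordinary truncated $\mathrm{BT}_n$ over $\bar{\mathbb F}_p$ is $\mu_{p^n}^a\times(\mathbb Z/p^n)^b$. Over a perfect field the filtration with \'etale sub and multiplicative quotient exists and is unique, and the $\bar{\mathbb F}_p$-points of the automorphism groups agree. So on perfect points the $P^-_\nu$-map is indistinguishable from the $P^+_\nu$-map. Yet it is not a monomorphism: the infinitesimal subgroup $\underline{\mathrm{Hom}}((\mathbb Z/p^n)^b,\mu_{p^n}^a)$ of the automorphism group scheme moves the \'etale sub. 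Nor is it \'etale: non-split Serre--Tate deformations admit no such sub. Your vanishing criterion is equally blind here. For perfect $\kappa$, both $\mathrm{R}\Gamma_{\mathrm{syn}}(\kappa,\mathbb Z_p(1))$ and $\mathrm{R}\Gamma_{\mathrm{syn}}(\kappa,\mathbb Z_p(-1))$ vanish, since $\sigma-p$ and $p\sigma-1$ are both bijective on $W(\kappa)$. The two signs are told apart only over non-reduced $\mathbb F_p$-algebras $R$, where $H^0_{\mathrm{syn}}(R,\mathbb Z/p^n(1))=\mu_{p^n}(R)\neq 0$ while the negative twist still has vanishing syntomic cohomology.

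Your slope intuition is nonetheless the right one; it just has to be used over arbitrary, in particular non-reduced, $p$-nilpotent bases, which is exactly what the perfect-point reduction discards. One way to repair Step~2 is as follows. Take two $P^+_\nu$-reductions of a $G$-aperture over any $R$, and compare the two induced $\nu$-filtrations, e.g.\ of the adjoint gauge.
\begin{itemize}
\item Choose $\mu$ dominant for a Borel over $\mathbb Z_p$ containing $T$. Then all $\sigma^i\mu$ are dominant, so $\mu$ is central in $M_\nu$, and every root with $\langle\alpha,\nu\rangle<0$ has $\langle\alpha,\mu\rangle\le 0$.
\item Hence the graded pieces of the gauge of homomorphisms from one filtration step to the quotient by the other are, \'etale locally, standard gauges of non-positive Hodge weights and strictly negative slope. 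On these the divided Frobenius is topologically nilpotent, so their syntomic $H^0$ vanishes over every base.
\item This forces the two reductions to coincide, i.e.\ the map is a monomorphism over all $R$.
\end{itemize}
\'Etaleness then comes for free. A monomorphism locally of finite presentation is unramified. An unramified representable morphism between smooth $0$-dimensional stacks over $\breve{\mathcal O}$ is \'etale: pass to a smooth atlas, use equality of dimensions on fibres, and apply the fibrewise flatness criterion. Alternatively, compute tangent complexes honestly through the Hodge filtration. The same root inequality expresses the transversality of the slope parabolic $P^+_\nu$ to the Hodge filtration; that is what makes $\mathrm{Lie}(P^+_\nu)$ surject onto the tangent directions, and it is what fails for $P^-_\nu$.

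Finally, Step~1 as written only makes sense for $n=\infty$. A class $[b]$ determines an isogeny class rather than an isomorphism class, and truncated apertures have no Newton point or slope filtration. To identify the image at level $n$, you must lift along the surjection $\mathrm{BT}^{G,\mu}_\infty\to\mathrm{BT}^{G,\mu}_n$. You then need that the open zip stratum equals the $\mu$-ordinary Newton stratum (a theorem of Wortmann), plus the Hodge--Newton decomposition for existence of the reduction. Uniqueness at finite level does not follow from uniqueness of the rational slope filtration, but it is supplied by the monomorphism argument above.
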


Combined with Proposition \ref{prop:frob_lift}, Proposition \ref{prop:ord_iso} ensures that the $\mu$-ordinary locus of the stack of $G$-apertures comes equipped with a canonical pseudo-Frobenius lift. This formal endomorphism acts as the foundation for defining the canonical lift of ordinary points and building the anticanonical tower in the strict neighborhood of the ordinary locus.

\section{Global $\mu$-Ordinary Locus and $p$-Hecke Correspondences}

Having established the local theory of $G$-apertures, we now translate these structures to the global setting of integral canonical models. Throughout, we let $\mathscr{S}_K$ be a limpid integral canonical model (ICM) for the Shimura variety $\mathrm{Sh}_K$ over $\mathcal{O}_{E,(v)}$. 
\begin{definition}[Madapusi--Youcis \cite{MY2026}]
\label{def:integral_canonical_model}
Let $(G, X)$ be a Shimura datum with reflex field $E$, and fix a place $v \mid p$ of $E$. Let $K \subset G(\mathbb{A}_f)$ be a neat compact open subgroup of the form $K = K_p K^p$, where $K_p \subset G(\mathbb{Q}_p)$ is a hyperspecial subgroup. Let $\mathrm{Sh}_K$ denote the associated Shimura variety over $E$, and let $\mathrm{Et}_{K,p}$ be the canonical $G^c(\mathbb{Z}_p)$-local system over $\mathrm{Sh}_K$, where $G^c$ denotes the cuspidal quotient of $G$.

An integral model $\mathscr{S}_K$ for $\mathrm{Sh}_K$ over $\mathcal{O}_{E,(v)}$, with $v$-adic completion $\widehat{\mathscr{S}}_K$, is called an \textbf{integral canonical model} (ICM for short) if the following two conditions hold:

\begin{enumerate}
    \item \textbf{(Serre--Tate property)} There exists a formally \'etale map of $p$-adic formal stacks over $\mathrm{Spf}\, \mathcal{O}_{E,v}$
    \[
    \varpi \colon \widehat{\mathscr{S}}_K \to \mathrm{BT}^{G^c,-\mu_v}_\infty,
    \]
    such that the induced $G^c(\mathbb{Z}_p)$-local system on the generic fiber is isomorphic to $\mathrm{Et}_{K,p}$.
    
    \item \textbf{(Pointwise criterion)} For any mixed characteristic $(0,p)$ complete discrete valuation field $F$ over $\mathcal{O}_{E,v}$ with perfect residue field, and for any point $x \in \mathrm{Sh}_K(F)$, the following statements are equivalent:
    \begin{enumerate}
        \item[(a)] $x$ extends to an integral point $x \in \mathscr{S}_K(\mathcal{O}_F)$;
        \item[(b)] $\mathrm{Et}_{K,p}|_x$ is a crystalline $G^c(\mathbb{Z}_p)$-local system;
        \item[(c)] $\mathrm{Et}_{K,p}|_x$ is a potentially crystalline $G^c(\mathbb{Z}_p)$-local system.
    \end{enumerate}
\end{enumerate}
\end{definition}

\begin{definition}[Madapusi--Youcis \cite{MY2026}]
\label{def:limpid_icm}
Let $(G, X)$ be a Shimura datum with reflex field $E$. Let $p$ be a prime of good reduction, so that $G_{\mathbb{Q}_p}$ extends to a reductive group scheme over $\mathbb{Z}_p$, and let $v$ be a place of $E$ over $p$. Consider a compact open subgroup $K = K_p K^p \subset G(\mathbb{A}_f)$ of hyperspecial level at $p$, and let $\mathrm{Sh}_K$ denote the associated Shimura variety over $E$. 

Let $G^c$ denote the cuspidal quotient of $G$. For any prime $\ell \neq p$, let $K^c_\ell \subset G^c(\mathbb{Q}_\ell)$ be the compact open subgroup defined as the image of $K$ under the natural projection $G(\mathbb{A}_f) \to G^c(\mathbb{Q}_\ell)$. The prime-to-$p$ geometry of the Shimura variety naturally equips $\mathrm{Sh}_K$ with a $K^c_\ell$-local system, which we denote by $\mathrm{Et}_{K,\ell}$.

An integral canonical model (ICM) $\mathscr{S}_K$ for $\mathrm{Sh}_K$ over $\mathcal{O}_{E,(v)}$ is said to be \textbf{limpid} if, for every prime $\ell \neq p$, the $K^c_\ell$-local system $\mathrm{Et}_{K,\ell}$ on the generic fiber extends to a local system over the entire integral model $\mathscr{S}_K$.
\end{definition}
\begin{definition}[The global $\mu$-ordinary locus, \cite{MY2026} Definition 7.4.3]
The \textbf{$\mu$-ordinary locus} of the integral canonical model $\mathscr{S}_K$ is defined by the fiber product along the syntomic realization map $\varpi$:
\[
\widehat{\mathscr{S}}^{\mathrm{ord}}_K = \widehat{\mathscr{S}}_K \times_{\mathrm{BT}^{G,-\mu_v}_\infty} \mathrm{BT}^{G,-\mu_v,\mathrm{ord}}_\infty.
\]
\end{definition}

In order to construct the anticanonical tower, we must lift the Frobenius endomorphism globally. We begin by observing how the local canonical Frobenius lift interacts with the \'etale realization.

\begin{remark}[Analytic fiber of the canonical Frobenius lift, \cite{MY2026} Remark 7.4.8]
\label{rem:analytic_frob}
Recall the canonical $q_0$-Frobenius lift $\widetilde{\Phi}$ on $\mathrm{BT}^{P^+_\nu,-\mu_v}_n \simeq \mathrm{BT}^{G,-\mu_v,\mathrm{ord}}_n$. For any $p$-complete $\mathcal{O}_{E_v}$-algebra $R$, we have the following commuting diagram:
\[
\begin{array}{ccc}
\mathrm{BT}^{P^+_\nu,-\mu_v}_n(R) & \xrightarrow{T_{\mathrm{\acute{e}t}}} & \mathrm{Loc}_{P^+_\nu(\mathbb{Z}/p^n\mathbb{Z})}(R[1/p]) \\
\vspace{1mm} \\
\downarrow{\widetilde{\Phi}} & & \downarrow{\mathrm{int}(\nu(p)^{-1})} \\
\vspace{1mm} \\
\mathrm{BT}^{P^+_\nu,-\mu_v}_n(R) & \xrightarrow{T_{\mathrm{\acute{e}t}}} & \mathrm{Loc}_{P^+_\nu(\mathbb{Z}/p^n\mathbb{Z})}(R[1/p])
\end{array}
\]
Here, the right vertical arrow is induced by the endomorphism of $P^+_\nu(\mathbb{Z}/p^n\mathbb{Z})$ given by the adjoint action of $\nu(p)^{-1}$. 
\end{remark}

This $p$-adic behavior perfectly mirrors the behavior of global Hecke correspondences at $p$. 

\begin{remark}[Global $p$-Hecke correspondences, \cite{MY2026} Remark 7.4.9]
Fix an element $h \in G(\mathbb{Q}_p)$. Associated with this element and the subgroup $K_h = K \cap hKh^{-1}$, we have the $p$-Hecke correspondence:
\[
\mathrm{Sh}_{K_h} \xrightarrow{(s_h, t_h)} \mathrm{Sh}_K \times \mathrm{Sh}_K
\]
where the map $t_h$ is induced from the natural inclusion $K_h \subset K$, and $s_h$ is induced from conjugation by $h^{-1}$.
\end{remark}

By leveraging the correspondence between the adjoint action on the local systems (Remark \ref{rem:analytic_frob}) and the global $p$-Hecke correspondences, we obtain a canonical pseudo-Frobenius lift on the ordinary locus of the integral model.

\begin{lemma}[Canonical pseudo-Frobenius lift on the $\mu$-ordinary locus, \cite{MY2026} Lemma 7.4.10]
\label{lem:canonical_pseudo_frob}
There exists a canonical morphism $\widehat{\Phi}^{\mathrm{ord}}_K \colon \widehat{\mathscr{S}}^{\mathrm{ord}}_K \to \widehat{\mathscr{S}}^{\mathrm{ord}}_K$ such that the following diagram commutes:
\[
\begin{array}{ccc}
\widehat{\mathscr{S}}^{\mathrm{ord}}_K & \xrightarrow{\varpi} & \mathrm{BT}^{G,-\mu_v,\mathrm{ord}}_\infty \\
\vspace{1mm} \\
\downarrow{\widehat{\Phi}^{\mathrm{ord}}_K} & & \downarrow{\widetilde{\Phi}} \\
\vspace{1mm} \\
\widehat{\mathscr{S}}^{\mathrm{ord}}_K & \xrightarrow{\varpi} & \mathrm{BT}^{G,-\mu_v,\mathrm{ord}}_\infty
\end{array}
\]
Here, the right vertical arrow is the inverse limit over $n$ of the maps $\widetilde{\Phi}$. Furthermore, the reduction-mod-$p$ of $\widehat{\Phi}^{\mathrm{ord}}_K$ is a purely inseparable finite flat map.
\end{lemma}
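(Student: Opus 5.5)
The plan is to build $\widehat{\Phi}^{\mathrm{ord}}_K$ from a lift of the local Frobenius along the formally étale map $\varpi$, and to pin it down by comparing its effect on the étale local systems with the $p$-Hecke correspondence for $h=\nu(p)^{-1}$.

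First I reduce to the special fiber. By Proposition \ref{prop:ord_iso}, the ordinary locus $\mathrm{BT}^{G,-\mu_v,\mathrm{ord}}_\infty$ is identified with $\mathrm{BT}^{P^+_\nu,-\mu_v}_\infty$. Proposition \ref{prop:frob_lift} gives $\widetilde{\Phi}$ on the latter, lifting the $q_0$-Frobenius. On the mod-$p$ fiber, the absolute $q_0$-Frobenius $F$ of $\mathscr{S}^{\mathrm{ord}}_{K,k}$ is relative over $k=\mathbb{F}_{q_0}$ (enlarging to $k(v)$ if needed). It commutes with $\varpi\otimes k$, because Frobenius is functorial, and it preserves the ordinary locus.

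Next I lift. Consider the square whose top map is $\varpi$ followed by $\widetilde{\Phi}$, and whose left map is $F$ on the special fiber. Since $\varpi$ is formally étale, a map $\widehat{\mathscr{S}}^{\mathrm{ord}}_K \to \widehat{\mathscr{S}}^{\mathrm{ord}}_K$ lifting $F$ and lying over $\widetilde{\Phi}\circ\varpi$ exists uniquely modulo each $p^n$. I get it by induction on $n$, using the infinitesimal lifting property along the nilpotent thickenings $\widehat{\mathscr{S}}\otimes\mathbb{Z}/p^n \hookrightarrow \widehat{\mathscr{S}}\otimes\mathbb{Z}/p^{n+1}$. The compatible lifts pass to the limit and define $\widehat{\Phi}^{\mathrm{ord}}_K$. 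The diagram commutes by construction, and uniqueness gives canonicity. One caveat: $\widehat{\mathscr{S}}$ is a formal scheme while $\mathrm{BT}_\infty$ is an Artin stack, so lifting 2-morphisms also needs uniqueness. That holds because formal étaleness of a representable-in-the-relevant-sense map gives an equivalence of groupoids of lifts. I would check this in the limit over $n$ using the affine diagonal from the representability theorem.

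Third, I prove the mod-$p$ claim. The reduction of $\widehat{\Phi}^{\mathrm{ord}}_K$ is by construction the $q_0$-Frobenius of $\mathscr{S}^{\mathrm{ord}}_{K,k}$, which is smooth over $k$. For a smooth variety over a perfect field, the relative Frobenius is finite, flat (Kunz) and purely inseparable. The absolute Frobenius is that map composed with a base-change isomorphism, since $k$ is perfect.

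Finally, I match with Hecke. On generic fibers I use Remark \ref{rem:analytic_frob} together with the fact that $\varpi$ realizes $\mathrm{Et}_{K,p}$. This shows that $\widehat{\Phi}^{\mathrm{ord}}_K$ pulls back the étale $G^c(\mathbb{Z}_p)$-local system to its $\mathrm{int}(\nu(p)^{-1})$-twist, restricted through $P^+_\nu$. That is the defining property of the Hecke map $s_h\circ t_h^{-1}$ for $h=\nu(p)^{-1}$, with $t_h$ inverted via the $P^+_\nu$-reduction, as in Remark 7.4.9. This identification is not needed for existence. It does explain why the endomorphism deserves the name canonical pseudo-Frobenius lift, and it is what will be used later.

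I expect the main obstacle to be the lifting step for stacky targets. It needs care that formal étaleness of $\varpi$ gives unique lifts of maps over the thickening, with $F$ on the source being only a map to itself and not a thickening. The trick is to lift the composite $\widehat{\mathscr{S}}\otimes\mathbb{Z}/p^{n+1}\to$ (target) using the square with $\widehat{\mathscr{S}}\otimes\mathbb{Z}/p^n$. This is standard deformation theory once the appropriate 2-categorical formal étaleness is granted.
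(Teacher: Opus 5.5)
Your proof is correct, but it takes a different route from the one the paper indicates. The paper itself only cites \cite{MY2026} for this lemma, so the comparison is with the paper's own account of that proof in the surrounding remarks.

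\textbf{Your route.} You build $\widehat{\Phi}^{\mathrm{ord}}_K$ purely by deformation theory. The absolute $q_0$-Frobenius of $\mathscr{S}^{\mathrm{ord}}_K\otimes k$ is compatible with $\varpi$ and with $\widetilde{\Phi}\otimes k$. Formal \'etaleness of $\varpi$ then lifts it uniquely modulo each $p^n$. The mod-$p$ assertion reduces to Kunz's theorem and finiteness of Frobenius on a smooth $k$-scheme.

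\textbf{The paper's route.} It obtains the lift from the $p$-Hecke correspondence. On the generic fiber of the ordinary tube, the canonical $P^+_\nu(\mathbb{Z}_p)$-reduction of $\mathrm{Et}_{K,p}$ gives a section $c$ of $t_\nu$, consistent with Remark~\ref{rem:analytic_frob}. The composite $s_\nu\circ c$ is then the generic fiber of $\widehat{\Phi}^{\mathrm{ord}}_K$.

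\textbf{What each buys.} Your argument is more elementary: it needs only the Serre--Tate property and Propositions~\ref{prop:frob_lift} and~\ref{prop:ord_iso}, and canonicity becomes a clean uniqueness statement. It also gives the sharper fact that the reduction is exactly the $q_0$-Frobenius, not merely ``purely inseparable finite flat''. That sharper fact is what Proposition~\ref{prop:coordinates} and Lemma~\ref{lem:global-height} actually use. The Hecke route instead supplies the section $c$ with $\Phi=s\circ c$, on which everything from Lemma~\ref{lem:section} through Theorem~\ref{thm:tower} rests.

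\textbf{Your last paragraph.} It does not recover that section. Knowing that $\widehat{\Phi}^{\mathrm{ord}}_K$ pulls $\mathrm{Et}_{K,p}$ back to its $\mathrm{int}(\nu(p)^{-1})$-twist does not determine a morphism into $\mathrm{Sh}_K$. For example, composing with an automorphism induced by an element of $G(\mathbb{A}_f^p)$ normalizing $K^p$ leaves that pullback unchanged. To get $\widehat{\Phi}^{\mathrm{ord}}_{K,\eta}=s_\nu\circ c$ on your route, you would proceed as follows:
\begin{enumerate}
\item define $c$ from the $P^+_\nu(\mathbb{Z}_p)$-reduction;
\item extend $s_\nu\circ c$ integrally, compatibly with $\varpi$;
\item check that its reduction mod $p$ is $F$.
\end{enumerate}
Your uniqueness argument then identifies the two maps.
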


\begin{remark}[Sections to the $p$-Hecke correspondence]
A key feature hidden within the proof of Lemma \ref{lem:canonical_pseudo_frob} is the existence of a geometric section. By taking $h = \nu(p)^{-1}$ and setting $K_\nu = K \cap \nu(p)K\nu(p)^{-1}$, the proof establishes that over the adic generic fiber of the ordinary locus $\widehat{\mathscr{S}}^{\mathrm{ord}}_{K,\eta}$, we have a canonical section to the $p$-Hecke correspondence:
\[
\widehat{\mathscr{S}}^{\mathrm{ord}}_{K,\eta} \to \mathrm{Sh}^{\mathrm{an}}_{K_\nu} \times_{t_\nu^{\mathrm{an}}, \mathrm{Sh}^{\mathrm{an}}_K} \widehat{\mathscr{S}}^{\mathrm{ord}}_{K,\eta}
\]
Composing this canonical section with the map $s_\nu^{\mathrm{an}}$ yields the adic fiber of the pseudo-Frobenius lift $\widehat{\Phi}^{\mathrm{ord}}_K$. This geometric section acts as the replacement for the canonical subgroup in constructing the anticanonical tower.
\end{remark}
\section{Generalized $\mu$-Ordinary Hasse Invariants via $G$-Zips}

To control the geometry of the ordinary locus in the special fiber of our limpid integral models, we rely on the group-theoretic construction of Hasse invariants developed by Koskivirta and Wedhorn \cite{KW2018}. We briefly review their construction for stacks of $G$-zips and then apply it to our setting.

\subsection{The Stack of $G$-Zips and Hasse Invariants}

Let $k$ be an algebraic closure of $\mathbb{F}_p$ (or $\mathbb{F}_q$). We fix a \textbf{zip datum} $\mathcal{Z} = (G, P, L, Q, M, \phi)$, where $G$ is a connected reductive group over $\mathbb{F}_q$, $\phi \colon G \to G$ is the relative $q$-power Frobenius, $P$ and $Q$ are parabolic subgroups of $G_k$, and $L \subset P$ and $M \subset Q$ are Levi subgroups such that $\sigma(L) = M$ (where $\sigma$ is the pullback by the absolute Frobenius). 

\begin{definition}[$G$-Zips, \cite{KW2018} \S 1.1]
The \textbf{zip group} $E$ associated with the zip datum $\mathcal{Z}$ is defined as
\[
E := \{ (x, y) \in P \times Q \mid \phi(\overline{x}) = \overline{y} \},
\]
where $\overline{x} \in L$ and $\overline{y} \in M$ are the Levi components of $x$ and $y$. The group $E$ acts on $G$ via $(a,b) \cdot g := a g b^{-1}$. The \textbf{stack of $G$-zips} of type $\mathcal{Z}$ is the quotient stack 
\[
G\text{-}\mathrm{Zip}^{\mathcal{Z}} \simeq [E \backslash G].
\]
\end{definition}

Given a cocharacter $\mu \colon \mathbb{G}_{m,k} \to G_k$, one can naturally attach a zip datum $\mathcal{Z}_\mu$. We write $G\text{-}\mathrm{Zip}^\mu$ for the associated stack. Let $U_\mu \subset G\text{-}\mathrm{Zip}^\mu$ denote the unique open zip stratum, which corresponds to the $\mu$-ordinary locus. To any character $\lambda \in X^*(L)$, one associates a line bundle $\mathscr{V}(\lambda)$ on $G\text{-}\mathrm{Zip}^\mu$.

\begin{definition}[$\mathcal{Z}$-ample characters, \cite{KW2018} Definition 5.1.1]
A character $\lambda \in X^*(L)$ is called \textbf{$\mathcal{Z}$-ample} if the attached line bundle on $G/\sigma^{-1}Q$ is anti-ample (equivalently, $\langle \lambda, \alpha^\vee \rangle < 0$ for all $\alpha \in \Delta \setminus \sigma^{-1}J$, where $J$ is the type of $Q$).
\end{definition}

\begin{definition}[Hasse Invariant, \cite{KW2018} Definition 5.1.3]
Let $\lambda \in X^*(L)$. A global section $f \in H^0(G\text{-}\mathrm{Zip}^\mu, \mathscr{V}(\lambda))$ is called a \textbf{Hasse invariant} if its non-vanishing locus is exactly the open stratum $U_\mu$.
\end{definition}

The main result of Koskivirta and Wedhorn establishes the existence of such invariants purely group-theoretically for a suitable multiple of any $\mathcal{Z}$-ample character.

\begin{theorem}[Koskivirta--Wedhorn, \cite{KW2018} Theorem 5.1.4]
\label{thm:KW_hasse}
If $\lambda \in X^*(L)$ is $\mathcal{Z}$-ample, there exists a Hasse invariant 
\[
h \in H^0(G\text{-}\mathrm{Zip}^\mu, \mathscr{V}(N_\mu \lambda)),
\]
where $N_\mu \ge 1$ is an explicit integer depending only on the finite group associated with the stabilizer of the open stratum. Moreover, $h$ is unique up to a non-zero scalar.
\end{theorem}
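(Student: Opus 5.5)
The plan is to work equivariantly on $G$ itself. Under $G\text{-}\mathrm{Zip}^\mu\simeq[E\backslash G]$, a section of $\mathscr V(\lambda)$ is the same thing as a regular function $f$ on $G$ satisfying
\[
f(a g b^{-1})=\lambda(\overline{a})\,f(g),\qquad (a,b)\in E,
\]
where $\lambda$ is extended to $P$ through $P\to L$. With the sign conventions of \cite{KW2018} one may have to use $\lambda^{-1}$ or $\lambda$ evaluated on $\overline{b}$ instead; I would fix this once at the start.

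\textbf{Step 1: sections on the open stratum.} The open stratum $U_\mu$ is a single $E$-orbit, say $E\cdot g_0$ with $g_0$ a suitable Weyl-group representative. So $U_\mu\simeq[E\backslash E/\mathrm{Stab}_E(g_0)]\simeq B\,\mathrm{Stab}_E(g_0)$. An equivariant function on this orbit is determined by its value at $g_0$. It exists and is nonzero exactly when $\lambda$ restricts trivially to $\mathrm{Stab}_E(g_0)$.

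The stabilizer is an extension of a finite group $A$ by a unipotent group scheme. For the $\mu$-ordinary orbit, $A$ is the group of $\mathbb F_q$-points of a Levi subgroup, or a twisted form of it. Any character is trivial on the unipotent part, so $\lambda|_{\mathrm{Stab}}$ factors through $A$. Let $N_\mu$ be the exponent of the character group of $A$, which is bounded by $|A|$. Then $N_\mu\lambda$ is trivial on the stabilizer. This gives a nonzero section $h_0$ of $\mathscr V(N_\mu\lambda)$ over $U_\mu$, unique up to scalar and nowhere vanishing on $U_\mu$. Uniqueness of the global $h$ up to scalar then follows at once, because $U_\mu$ is dense.

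\textbf{Step 2: extension and vanishing.} The preimage of $U_\mu$ in $G$ is the orbit $E/\mathrm{Stab}$. This is affine, since $E$ is affine and the stabilizer is finite modulo unipotent, and in fact quasi-affine suffices. Hence its complement $D$ in the smooth variety $G$ has pure codimension one. It is therefore the union of the closures of the finitely many codimension-one zip strata, which are indexed by the elements of length $\ell(w_{\max})-1$ in ${}^IW$.

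Since $G$ is normal, $h_0$ extends to all of $G$ as soon as its order of vanishing $\mathrm{ord}_{D_w}(h_0)$ along each such divisor $D_w$ is $\ge 0$. It vanishes exactly on $D$ precisely when every one of these orders is $>0$, because $D$ is the full complement of $U_\mu$.

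\textbf{Step 3: computing the orders.} To compute $\mathrm{ord}_{D_w}(h_0)$, I would choose a one-parameter curve transverse to $D_w$ at a general point. The natural choice is $t\mapsto g_0\,u_{\alpha}(t)$, or $g_0\,\alpha^\vee$-type curves inside the $SL_2$ attached to the simple root $\alpha$ with $w_{\max}=w\,s_\alpha$. The equivariance rule then converts the behavior of $h_0$ along this curve into a Chevalley-type formula. This expresses $\mathrm{ord}_{D_w}(h_0)$ as $N_\mu$ times a positive combination of $-\langle\lambda,\beta^\vee\rangle$, for coroots $\beta^\vee$ lying in the Frobenius orbit of $\alpha^\vee$ and coming from $\Delta\setminus\sigma^{-1}J$.

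The $\mathcal Z$-ampleness hypothesis, $\langle\lambda,\alpha^\vee\rangle<0$ for $\alpha\in\Delta\setminus\sigma^{-1}J$, makes each of these quantities strictly positive. This gives extension together with vanishing exactly on $D$.

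\textbf{Main obstacle.} I expect Step 3 to be the hard part. One has to identify the codimension-one strata and the correct transverse curves, and then track the Frobenius twist $\phi$ in the zip group. The twist makes the relevant orbit of $\alpha$ under $w\circ\sigma$ contribute several terms, and it also produces the factor $q$ weighting some of them. My first approach would be to reduce to the rank-one Levi generated by that Frobenius orbit of simple roots, where the zip stack can be written down explicitly and the order of $h_0$ read off directly.
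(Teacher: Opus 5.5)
The paper gives no proof of this statement; it is quoted from Koskivirta--Wedhorn, so your argument has to stand on its own.

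Steps 1 and 2 are a correct reduction:
\begin{itemize}
\item a section of $\mathscr V(\chi)$ over $U_\mu$ is determined by its value at $g_0$, and exists iff $\chi$ is trivial on $\mathrm{Stab}_E(g_0)$;
\item characters kill the unipotent (possibly infinitesimal) part of the stabilizer, so $N_\mu\lambda$ works;
\item density of $U_\mu$ gives uniqueness up to scalar.
\end{itemize}
One correction: quasi-affineness does \emph{not} suffice, since $\mathbb A^2\setminus\{0\}$ is quasi-affine with complement of codimension two. What you need, and do have, is affineness. Indeed $\dim E=\dim P+\dim R_u(Q)=\dim G$, so the stabilizer of the open orbit is a finite, possibly non-reduced, group scheme. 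Hence $E\cdot g_0\simeq E/\mathrm{Stab}_E(g_0)$ is affine.

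The genuine gap is Step 3, which carries the whole content of the theorem. After Steps 1--2 the claim is exactly that $\mathrm{ord}_{D_w}(h_0)>0$ for every codimension-one orbit closure. You assert a ``Chevalley-type formula'' for this order but do not derive it. Concretely:

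(i) Your curve $t\mapsto g_0u_\alpha(t)$ starts at $g_0\in U_\mu$, not on $D_w$. You need a representative $g_w$ of the codimension-one orbit and a curve $\gamma$ through it that is transverse to $D_w$.

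(ii) Evaluating $h_0(\gamma(t))=\lambda(\overline{a(t)})\,h_0(g_0)$ requires solving $\gamma(t)=a(t)g_0b(t)^{-1}$ in $E$. The constraint $\overline b=\phi(\overline a)$ makes this a Lang-type equation, which in general is solvable only over a ramified and even inseparable extension of $k((t))$. For example, take $G=\mathrm{Res}_{\mathbb F_{p^2}/\mathbb F_p}\mathrm{GL}_2$ with $\mu$ nontrivial at one embedding, so $P=B\times\mathrm{GL}_2$ and $Q=\mathrm{GL}_2\times B^-$.
\begin{itemize}
\item $P\times Q$ acts transitively on $G$, so the unique codimension-one $E$-orbit is not a Schubert-type divisor and no classical Chevalley formula is available.
\item That divisor is cut out by the Frobenius-twisted equation $(g_1g_2^{(p)})_{22}=0$.
\item The stabilizer of $(1,1)$ is $T(\mathbb F_{p^2})\ltimes\alpha_p$.
\item Writing $(\dot s\,u(t),1)\in E\cdot(1,1)$, with $\dot s$ the antidiagonal permutation matrix and $u(t)$ upper unipotent, requires adjoining $(-t)^{1/(p(p^2-1))}$.
\end{itemize}

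(iii) Suppose you did obtain $\mathrm{ord}_{D_w}(h_0)$ as a $q$-power-weighted sum of $\langle\lambda,\beta_i^\vee\rangle$ over the chain of coroots produced by iterating the twisted Frobenius. $\mathcal Z$-ampleness still only fixes the sign of $\langle\lambda,\beta^\vee\rangle$ for $\beta$ positive and not a root of $L$. A negative coroot in the chain would contribute with the wrong sign. Ruling this out is the actual combinatorial input. It depends on the precise position of the $\mu$-ordinary representative relative to the frame, and on the sign convention for $\mathscr V(\lambda)$, which you left unfixed.

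Finally, your suggested reduction to a ``rank-one Levi generated by the Frobenius orbit'' would need three things, none of which you construct: a $\phi$-stable Levi sub-zip datum, a morphism of zip stacks transverse to $D_w$, and compatibility of that morphism with $\mathscr V(\lambda)$.
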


\subsection{Hasse Invariants for Limpid Integral Models}

We now transfer this group-theoretic result to our geometric setting. Recall that for an unramified Shimura tuple $(G, \mathscr{G}, X, K)$ admitting a limpid integral canonical model $\mathscr{S}_K$ over $\mathcal{O}_{E,(v)}$, the syntomic realization yields a formally smooth morphism 
\[
\varpi \colon \widehat{\mathscr{S}}_K \to \mathrm{BT}^{G^c, -\mu_v}_\infty.
\]
By reducing modulo $p$ (specifically, over the residue field $k(v)$) and applying the truncation map to level 1 apertures alongside Proposition 3.5.5 of \cite{MY2026}, we obtain a smooth morphism of stacks:
\[
\zeta \colon \mathscr{S}_K \otimes k(v) \longrightarrow \mathrm{BT}^{G^c, -\mu_v}_1 \otimes k(v) \longrightarrow G^c\text{-}\mathrm{Zip}_{-\mu_v}.
\]
By construction, the inverse image of the open zip stratum $U_{-\mu_v}$ under $\zeta$ is precisely the $\mu$-ordinary locus $\mathscr{S}_K^{\mathrm{ord}} \otimes k(v)$ of the integral model.

For a Shimura variety of Hodge type, one typically pulls back the Hodge line bundle $\omega$ from the Siegel moduli space. In our abstract setting—where no universal abelian variety exists—we substitute the Hodge bundle with the pullback of a $\mathcal{Z}$-ample line bundle from the stack of $G$-zips. 

Let $\lambda \in X^*(L)$ be a $\mathcal{Z}$-ample character for the zip datum attached to $-\mu_v$, and let $\mathscr{V}(\lambda)$ be the corresponding line bundle on $G^c\text{-}\mathrm{Zip}_{-\mu_v}$. We define a line bundle on the special fiber of our limpid integral model by
\[
\mathscr{L} := \zeta^*(\mathscr{V}(\lambda)).
\]
Using Theorem \ref{thm:KW_hasse} and the smoothness of $\zeta$, we immediately deduce the following analogue of Corollary 5.6.1 of \cite{KW2018}:

\begin{corollary}[Global Hasse Invariant for Limpid ICMs]
\label{cor:hasse_limpid}
There exists a global section 
\[
h_K = \zeta^*(h) \in H^0(\mathscr{S}_K \otimes k(v), \mathscr{L}^{\otimes N_{\mu_v}})
\]
whose non-vanishing locus is exactly the $\mu$-ordinary locus $\mathscr{S}_K^{\mathrm{ord}} \otimes k(v)$.
\end{corollary}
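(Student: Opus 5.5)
The plan is to pull back the Koskivirta--Wedhorn Hasse invariant along $\zeta$ and identify its non-vanishing locus; nothing beyond functoriality of line bundles and sections on stacks is needed. First I fix the $\mathcal{Z}$-ample character $\lambda$ for the zip datum attached to $-\mu_v$ (for the group $G^c$). Theorem~\ref{thm:KW_hasse} then gives a section $h \in H^0(G^c\text{-}\mathrm{Zip}_{-\mu_v}, \mathscr{V}(N_{\mu_v}\lambda))$ whose non-vanishing locus is exactly $U_{-\mu_v}$. Since $\lambda \mapsto \mathscr{V}(\lambda)$ is a homomorphism from $X^*(L)$ to the Picard group of the zip stack (the bundles come from characters of $E$ through $P \to L$), we have $\mathscr{V}(N_{\mu_v}\lambda) \simeq \mathscr{V}(\lambda)^{\otimes N_{\mu_v}}$. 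Pullback of line bundles along $\zeta$ commutes with tensor powers, so $\zeta^*\mathscr{V}(N_{\mu_v}\lambda) \simeq \mathscr{L}^{\otimes N_{\mu_v}}$, and $h_K := \zeta^* h$ is a global section of this bundle on $\mathscr{S}_K \otimes k(v)$.

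Next I identify the non-vanishing locus. For any morphism of stacks, the non-vanishing locus of a pulled-back section is the preimage of the non-vanishing locus. One checks this on points: at a geometric point $x$, the fiber $(\zeta^*h)(x)$ is $h(\zeta(x))$ under the identification of fibers $(\zeta^*\mathscr{V})_x = \mathscr{V}_{\zeta(x)}$. Hence
\[
\{h_K \neq 0\} = \zeta^{-1}(\{h \neq 0\}) = \zeta^{-1}(U_{-\mu_v}).
\]
By the discussion preceding the corollary, this preimage is the $\mu$-ordinary locus $\mathscr{S}_K^{\mathrm{ord}} \otimes k(v)$. That identification comes from the definition of $\widehat{\mathscr{S}}^{\mathrm{ord}}_K$ as a fiber product over $\mathrm{BT}^{G,-\mu_v,\mathrm{ord}}_\infty$. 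That substack's mod-$p$ fiber is the preimage of $G\text{-}\mathrm{zip}^{\mathrm{ord}}_\mu$, and since the transition maps $\mathrm{BT}_\infty \to \mathrm{BT}_1$ and the map to zips are compatible, it is the preimage under $\zeta$ of the open stratum.

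The main obstacle is not the formal pullback but matching conventions so that this identification is really an equality. There are two points to check. First, the map $\mathrm{BT}^{G^c,-\mu_v}_1 \otimes k(v) \to G^c\text{-}\mathrm{Zip}_{-\mu_v}$ of Proposition 3.5.5 of \cite{MY2026} must carry the open aperture stratum onto the KW open stratum $U_{-\mu_v}$, with the same zip datum: the same sign of the cocharacter and the same choice of $P$ and $Q$. Second, $\mathcal{Z}$-ampleness must be taken with respect to that datum. I would handle this by noting that the map to zips is a gerbe, so it induces a bijection of topological strata preserving the closure order. The unique open dense stratum therefore corresponds to the unique open dense stratum, whatever normalization is used. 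Smoothness of $\zeta$ is not logically needed for the statement. It does, however, show that the $\mu$-ordinary locus is open and dense wherever $\zeta$ hits $U$, which justifies calling $h_K$ a Hasse invariant. Note also that $h$ is unique only up to a scalar, so $h_K$ is canonical only up to $k(v)^\times$, which is harmless.
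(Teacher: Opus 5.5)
Your proposal is correct and follows the paper's argument: pull back the Koskivirta--Wedhorn section $h$ along $\zeta$, note that $\mathscr{V}(N_{\mu_v}\lambda)$ pulls back to $\mathscr{L}^{\otimes N_{\mu_v}}$, and use that $\zeta^{-1}(U_{-\mu_v})$ is the $\mu$-ordinary locus, which holds by the definition of the ordinary locus of the aperture stack. The paper additionally cites smoothness of $\zeta$; you are right that it is not needed for the non-vanishing statement itself and matters only for density-type refinements such as the one in Koskivirta--Wedhorn's Corollary 5.6.1.
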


\section{Extension of the $p$-Hecke Correspondence and the Anticanonical Tower}

In the previous section, we established the existence of the global Hasse invariant $h_K$ and the canonical pseudo-Frobenius lift $\widehat{\Phi}_K^{\mathrm{ord}}$ over the formal $\mu$-ordinary locus. We now analytically continue this structure to a strict neighborhood of the ordinary locus in the adic generic fiber, which will allow us to construct the anticanonical tower.

We fix our conventions for the $p$-Hecke correspondences. Let $K_0 = G(\mathbb{Z}_p)$ be the hyperspecial maximal compact subgroup at $p$, and recall the element $\nu(p)^{-1} \in G(\mathbb{Q}_p)$ generating the canonical $p$-Hecke correspondence. We define a decreasing sequence of compact open subgroups
\[
    K_m = K_0 \cap \nu(p)^m K_0 \nu(p)^{-m}.
\]
The corresponding Shimura varieties at finite level are denoted $\mathcal{H}_m = \mathrm{Sh}_{K^p K_m}$. For each $m \geq 1$, we have the target projection $u_m \colon \mathcal{H}_m \to \mathrm{Sh}_K$ (the standard level-forgetting map) and the source projection $s_m \colon \mathcal{H}_m \to \mathrm{Sh}_K$.

Fix a finite unramified extension $E/\mathbb Q_p$ with residue field
$k=\mathbb F_q$, where $q=p^d$, and a complete algebraically closed
extension $C/E$. Normalize $v(p)=1$. For rational exponents used below,
choose compatible elements $p^r\in C$; in particular
$(p^{r/q})^q=p^r$.

Fix a Hasse invariant $\bar h\in H^0(\cS_k,\mathscr L)$ whose
non-vanishing locus is the $\mu$-ordinary stratum. Choose formal affine charts on which $\mathscr L$ is trivial on the
special fiber, and lift its local equations to functions $h\in A$.
For $0\leq r<1$ define $S(r)$ locally by $|h|\geq |p|^r$.
On overlaps two equations differ by an integral unit modulo $p$.
Their norms therefore give the same inequality for $r<1$, so these
domains glue. The ordinary tube is $S(0)$. 

\begin{lemma}[Extension of a finite etale section]\label{lem:section}
Let $V$ be a reduced affinoid analytic space, let $h\in\mathcal O^+(V)$,
and set $V(r)=\{|h|\geq |p|^r\}$. A section of a finite etale
morphism $W\to V$ over $V(0)$ extends over $V(r)$ for some $r>0$.
The extension is unique as a germ along $V(0)$.
\end{lemma}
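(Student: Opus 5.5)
The plan is to reduce to a statement about idempotents: sections of a finite étale map are exactly the idempotents that split off a degree-one summand. After shrinking, I may assume $W=\Spa(B,B^+)$ with $B$ a finite étale $A$-algebra, where $V=\Spa(A,A^+)$. (If $W$ has non-constant rank, I first decompose $V$ into the finitely many clopen pieces on which the rank is constant.) A section of $W\to V$ over an open $U$ is then the same thing as an $\mathcal O(U)$-algebra map $B\otimes_A\mathcal O(U)\to\mathcal O(U)$. Equivalently, it is an idempotent $e\in B\otimes_A\mathcal O(U)$ whose image in $B\otimes_A\mathcal O(U)/(1-e)$ identifies that factor with $\mathcal O(U)$. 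To control norms, I choose a presentation $B=A[x_1,\dots,x_n]/I$ with generators $x_i\in B^+$. Then a section amounts to a tuple $(a_1,\dots,a_n)$ of functions satisfying the defining equations $g_j(a)=0$ for generators $g_j$ of $I$. Since $x_i$ is integral, each $a_i$ is power-bounded.

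\textbf{Step 1: approximation.} Write $V(r)=\Spa(A_r,A_r^+)$ with $A_r=A\langle p^r/h\rangle$, which is a rational subset. Then $V(0)=\bigcap_{r>0}V(r)$, and since the $V(r)$ are a cofinal system of rational neighbourhoods of $V(0)$, we have $\mathcal O(V(0))^{+}$-approximation: the algebra $\colim_{r\to 0}A_r$ is dense in $A_0$. Here I use that $A_0$ is the completion of this colimit. Concretely, $A_0=A\langle 1/h\rangle$, and $A\langle p^r/h\rangle\to A\langle 1/h\rangle$ has dense image, because $1/h=p^{-r}\cdot(p^r/h)$ has polynomial approximants. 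So I pick a given section $a=(a_i)$ over $V(0)$ and approximate it by $a'\in A_r^{n}$ for some small $r>0$, with $|a-a'|\le|p|^N$ on $V(0)$.

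\textbf{Step 2: Hensel/Newton correction.} Because $B$ is étale over $A$, the Jacobian criterion gives the following. After refining the presentation to a standard étale one, $B=A[x]_{f'}/(f)$ locally, or more generally, working with the idempotent $e$ directly, the section corresponds to a simple root. There the discriminant-type element $\delta=f'(a)$ is a unit on $V(0)$, and its inverse is bounded by some constant $|p|^{-c}$. Choosing $N>2c$, the residual $f(a')$ is small on $V(0)$ and remains small on $V(r)$ after shrinking $r$. This holds because the sup norms over $V(r)$ of fixed elements of $A_r$ converge to those over $V(0)$ as $r\to0$. That convergence follows from the maximum principle and from $V(0)$ being the intersection, by a compactness argument on the spectral space. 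Newton's iteration in the Banach algebra $A_r$ then converges to a genuine root $\tilde a$ over $V(r)$ extending $a$.

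I prefer to phrase the argument with idempotents, which avoids choosing standard étale presentations. Lift the section idempotent $e_0\in B\otimes A_0$ to an approximate idempotent $e'$ over $V(r)$ with $|e'^2-e'|<1$ in the spectral norm. Then apply the standard idempotent-lifting iteration $e\mapsto 3e^2-2e^3$, which converges in a complete uniform Banach algebra. Here reducedness of $V$ is used: it makes $B\otimes A_r$ uniform, so spectral and Banach norms agree. The rank-one condition on $e$ is locally constant, so it persists.

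\textbf{Uniqueness.} Two extensions over $V(r)$ and $V(r')$ agree over $V(0)$. The locus where two sections of a finite étale (hence separated, unramified) map agree is open and closed. This locus contains $V(0)$, and every connected component of $V(\min(r,r'))$ meets $V(0)$ after shrinking, by the same compactness argument. So the two sections agree on some $V(r'')$, which is the germ uniqueness. I expect the main obstacle to be the compactness step: showing that norm inequalities holding on $V(0)$ propagate to some $V(r)$ with $r>0$ uniformly. I would handle this by noting that $\{x:|f(x)|\le |p|^{N}\}$ is a rational, hence quasi-compact open, subset containing the quasi-compact intersection $\bigcap_r V(r)$. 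Hence it contains some $V(r)$, since the $V(r)$ form a decreasing family of quasi-compact subsets of a spectral space.
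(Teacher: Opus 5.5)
Your overall route matches the paper's:
\begin{itemize}
\item encode the section as a rank-one idempotent;
\item approximate it by an element of $B\otimes_A A[1/h]$ (the paper's $D[1/h]$);
\item shrink so that sup norms on $V(r)$ are close to those on $V(0)$;
\item correct by a Newton-type iteration for $T^2-T$ (your $e\mapsto 3e^2-2e^3$ is the division-free Newton step);
\item shrink again so the rank is $1$ everywhere.
\end{itemize}
Your equalizer idea for germ uniqueness also addresses a point the paper's proof leaves implicit.

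The step that fails as written is the compactness mechanism you single out as the main obstacle. In the adic spectrum one has $\bigcap_{r>0}V(r)\supsetneq V(0)$. On the closed unit disc with $h=T$, the higher-rank point just inside the Gauss point, towards the residue disc of $0$, has $|T|$ infinitesimally below $1$. So it lies in every $V(r)$ but not in $V(0)$. Correspondingly, a non-strict bound on $V(0)$ need not propagate to any $V(r)$. For $f=p^N/T$ one has $\sup_{V(0)}|f|=|p|^N$ but $\sup_{V(r)}|f|=|p|^{N-r}$. Hence the rational set $\{|f|\le|p|^N\}$ equals $V(0)$ and contains no $V(r)$, and your claim that it contains $\bigcap_r V(r)$ is false.

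The repair is cheap, and there are two ways to do it.
\begin{itemize}
\item Run the compactness on Berkovich spectra, as the paper does. That is where the spectral norms live, and there the intersection of the $V(r)$ really is $V(0)$.
\item Alternatively, note that each point of $\bigcap_r V(r)$ is a specialization of its rank-one generization, and that generization lies in $V(0)$. Hence \emph{strict} inequalities and clopen conditions pass from $V(0)$ to $\bigcap_r V(r)$, and constructible compactness then yields some $V(r)$.
\end{itemize}
Strictness costs nothing here, since you only need $\|e'^2-e'\|<1$ and may approximate $e_0$ as closely as you like.

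The same observation justifies the rank-one shrinking. For uniqueness, argue directly: the complement of the agreement locus is clopen in $V(\min(r,r'))$ and misses $V(0)$, hence misses some $V(r'')$. Your assertion that every connected component of $V(r'')$ meets $V(0)$ is not what the compactness argument delivers, and it is not needed.
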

\begin{proof}
Write $V=\Spa(R,R^+)$ and let $D$ be the finite etale $R$-algebra
of $W$. The section determines an idempotent in
$D\widehat\otimes_R\mathcal O(V(0))$ cutting out a rank-one factor.
The image of $D[1/h]$ is dense in this algebra. Approximate the
idempotent by an element $b\in D[1/h]$, which is analytic on a strict
neighborhood.

The Berkovich spectra of the nested affinoids $V(r)$ are compact and
their intersection is the spectrum of $V(0)$. For a fixed analytic
function on one of these neighborhoods its supremum norms consequently
converge to its norm on $V(0)$. The same applies on the finite cover.
After shrinking, $b^2-b$ is small enough; Hensel's lemma guarantees that the standard Newton iteration
\[
    b_{n+1} = b_n - f(b_n)f'(b_n)^{-1}
\]
converges with respect to the Banach norm to a unique exact root $e_\varepsilon \in \mathcal{B}_\varepsilon$ of $T^2 - T$ such that $\|e_\varepsilon - b\|_{\mathcal{B}_\varepsilon} < 1$. 

The finite projective algebra defined by $e_\varepsilon$ has rank 1 along $S(0)$, and since the rank of a finite locally free module over a reduced affinoid space is locally constant, we can shrink $\varepsilon$ further until it has rank 1 everywhere, defining the extended section over $S(\varepsilon)$. 
\end{proof}

On a finite affinoid cover, uniqueness of germs allows the local
extensions to glue after choosing a common radius. Thus we get an analytic Hecke section
on some $S(r_0)$. Its other projection is an analytic map $\Phi$.
Lemma~\ref{lem:global-height} below proves forward preservation of a
finite collection of formal affine charts after shrinking.

Put $A_C=A\widehat\otimes_{\OEv}\OC$.
and
\[
 B_r=A_C\langle u_r\rangle/(hu_r-p^r)
\]

\begin{lemma}[Flatness of the models]\label{lem:flat}
Each $B_r$ is $\OC$-flat and $p$-adically complete. For $0<s\leq r$
restriction is induced by
\[
 B_r\longrightarrow B_s,\qquad u_r\longmapsto p^{r-s}u_s.
\]
After inverting $p$, $B_r$ is the affinoid algebra of $U(r)$.
\end{lemma}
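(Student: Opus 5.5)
The plan is to reduce every claim to one structural fact: $h$ is a non-zero-divisor modulo $p$ in the reduced ring $A\otimes k$. Recall that $A$ is the coordinate ring of a formal affine chart of the smooth model $\cS$. It is therefore $p$-torsion free, $p$-adically complete and formally smooth over $\OEv$, and $\bar h$ is a local equation of the Hasse invariant, which is nonzero on every component of the chart. I first establish this fact, namely that $\bar h=h \bmod p$ is a non-zero-divisor in $A/p$. The reason is that $A/p$ is a smooth $k$-algebra, hence reduced with finitely many minimal primes. By Corollary~\ref{cor:hasse_limpid} the non-vanishing locus of $\bar h$ is the $\mu$-ordinary locus. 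That locus is dense in each component of the special fiber, because $\zeta$ is smooth (hence open) and $U_{-\mu_v}$ is open dense. After base change to $\OC$, the ring $A_C/p^\epsilon = (A/p)\otimes_k \OC/p^\epsilon$ keeps $\bar h$ a non-zero-divisor. Here $\OC/p^\epsilon$ is flat over $k$ for $\epsilon\le 1$.

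For flatness, I write $B_r$ as the $p$-adic completion of $A_C[u]/(hu-p^r)$. The element $hu-p^r$ is a non-zero-divisor modulo $p^r$, since there it is $hu$ and $h$ is a non-zero-divisor in $A_C/p^r[u]$. A standard argument then shows that $A_C[u]/(hu-p^r)$ has no $p^r$-torsion, and hence no $\varpi$-torsion for any pseudo-uniformizer $\varpi$. This amounts to the sequence $(p^r, hu-p^r)$ being Koszul-regular, or equivalently $(hu-p^r,p^r)$ being regular, together with $p$-torsion freeness of $A_C[u]$. The completion preserves torsion-freeness; I will use that torsion-free modules over the valuation ring $\OC$ are flat, and argue via the Mittag-Leffler property of the $p^n$-quotients. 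The quotients $B_r/p^n$ are the uncompleted quotients, so completeness holds by definition. The same regular-sequence argument shows that the Tate-algebra presentation computes the same ring.

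For the transition maps, on $U(s)\subset U(r)$ we have $|h|\ge|p|^s$, so $u_s=p^s/h$ and $u_r=p^r/h=p^{r-s}u_s$ as functions. The assignment $u_r\mapsto p^{r-s}u_s$ respects the relation, because $h\,p^{r-s}u_s=p^{r-s}p^s=p^r$. It is therefore a well-defined continuous $A_C$-algebra map, and it agrees with restriction after inverting $p$.

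For the generic fiber, $B_r[1/p]=A_C[1/p]\langle u\rangle/(hu-p^r)$ is by definition the affinoid algebra of the rational subset $\{|h|\ge |p|^r\}=\{|p^r/h|\le 1\}$ of $\Spa(A_C[1/p],A_C)$. This uses the fact that $h$ and $p^r$ generate the unit ideal in $A_C[1/p]$, which is clear because $p^r$ is a unit there. That rational subset is $U(r)$.

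The main obstacle is the torsion-freeness step, which needs the regular sequence in the possibly non-noetherian ring $A_C$, where I must control completions. My first attempt would be to verify it on $A_C/p^n$ by flat base change from the noetherian $A/p^n$ along $\OEv/p^n \to \OC/p^n$.
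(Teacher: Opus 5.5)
Your proposal is correct and follows essentially the paper's argument. Both show that $h$ is a non-zero-divisor modulo scalars, deduce that $hu-p^r$ is injective in the polynomial presentation so the quotient has no $\OC$-torsion, note that $p$-adic completion preserves this, and conclude by flatness of torsion-free $\OC$-modules, with the restriction map and generic fiber read off from the rational-localization presentation. Your one deviation is an economy: you need $h$ to be a non-zero-divisor only modulo $p^r$ (by flat base change over the field $k$) and then get $p^r$-torsion-freeness from the regular-sequence swap, whereas the paper proves it modulo every scalar by induction on the valuation and compares leading coefficients in $u$.
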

\begin{proof}
The usual rational localization presentation gives the last assertion
and the displayed restriction map. For flatness, $h$ is a
non-zero-divisor after reduction modulo any nonzero scalar in $\OC$.
This follows first over finite extensions of $E$, by induction on the
valuation using the hypothesis on $\bar h$, and then by flat base
change and completion. In the polynomial presentation, multiplication
by $hu-p^r$ remains injective modulo every such scalar: compare the
leading coefficient in $u$. The quotient therefore has no scalar
torsion. Taking its $p$-adic completion preserves this property; one
may use cofinal scalar powers to compute the completion. A torsion-free
module over the valuation ring $\OC$ is flat.
\end{proof}

\begin{lemma}[Integral clearing of poles]\label{lem:poles}
Let $F$ be an analytic function on $U(r_0)$, defined over a finite
extension of $E$, whose restriction to $U(0)$ belongs to
$p\widehat{A[1/h]}$ after the same scalar extension. There are an
integer $N\geq0$ and a smaller positive radius such that
\[
 F\in p^{1-Nr}B_r
\]
for all smaller rational $r>0$ with $Nr<1$.
\end{lemma}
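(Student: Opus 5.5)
Write $F_0=F|_{U(0)}$, so $F_0\in p\,\widehat{A[1/h]}$ after a finite scalar extension. The idea is to transport this description from the ordinary tube to the smaller domains $U(r)$, keeping track of how many powers of $h^{-1}$ are used. On $U(r)$ the element $h^{-1}=p^{-r}u_r$ lies in $p^{-r}B_r$, so a pole of order $N$ costs a factor $p^{-Nr}$. The plan has three steps: write $F$ as a polynomial in $h^{-1}$ of bounded degree plus a small error; clear the denominators integrally in $B_r$; and absorb the error using the decay of sup norms from the proof of Lemma~\ref{lem:section}.

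\textbf{Step 1 (polynomial approximation on $U(0)$).} The ring $\widehat{A[1/h]}$ is the $p$-adic completion of $A[1/h]$, so every element of $p\widehat{A[1/h]}$ is congruent modulo $p^2\widehat{A[1/h]}$ to an element $p\,a/h^N$ with $a\in A$. Doing this once gives an integer $N$ and $a\in A$ (over the finite extension) such that
\[
F_0 = p\,a h^{-N} + G_0,\qquad G_0\in p^2\widehat{A[1/h]}.
\]
The first term extends to every $U(r)$. There it equals $p\,a\,p^{-Nr}u_r^N\in p^{1-Nr}B_r$, using the relation $hu_r=p^r$ from Lemma~\ref{lem:flat}.

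\textbf{Step 2 (bounding the remainder).} Set $G=F-p\,ah^{-N}$, an analytic function on $U(r_0)$ with $\|G\|_{U(0)}\le|p|^2$. As in the proof of Lemma~\ref{lem:section}, the Berkovich spectra of the $U(r)$ are compact and decrease to that of $U(0)$. Hence $\|G\|_{U(r)}\to\|G\|_{U(0)}$, so after shrinking $r_0$ we get $\|G\|_{U(r)}\le|p|^{3/2}$ for all $r\le r_0$.

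\textbf{Step 3 (sup-norm bound to integral bound).} This is the main obstacle. One must pass from "$G$ has sup norm at most $|p|^{3/2}$ on $U(r)$" to "$G\in p\,B_r$", and $B_r$ need not equal $\mathcal O^+(U(r))$. My first attempt will use that $A$ is $p$-adically formally smooth over $\mathcal O_E$, since $\cS$ is smooth. So $A_C$ is integrally closed in $A_C[1/p]$ with reduced special fiber, and its power-bounded elements are exactly $A_C$. For $B_r$ I would use the presentation of Lemma~\ref{lem:flat}. Take $r=m/M$ with $M$ fixed by the finite extension, so that $B_r$ is a finite-type model whose special fiber is $A_k[u]/(\bar h u)$, which is reduced because $\bar h$ is a non-zero-divisor on the reduced ring $A_k$. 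Reduced special fiber then gives $\mathcal O^+(U(r))=B_r$ up to a bounded power of $p$, and I would check that this power is $p^0$ here; a cruder bound $p^{-\epsilon}$ would already suffice. This yields $G\in p^{3/2-\epsilon}B_r\subset p\,B_r\subset p^{1-Nr}B_r$.

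For general rational $r$ I would deduce the result from the case of nearby radii of this form, using the transition maps $u_r\mapsto p^{r-s}u_s$. If the reducedness argument fails over $\OC$, the fallback is to replace Step 3 by iterating Step 1 on $G$: approximate $G$ modulo $p^k$ by $p\,a'h^{-N'}$, and show that $N'$ stays bounded using the sup-norm control from Step 2. The result is a convergent series in $p^{1-Nr}B_r$ by $p$-adic completeness, which is part of Lemma~\ref{lem:flat}.
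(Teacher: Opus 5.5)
\textbf{There is a genuine gap in Step 3.} Steps 1 and 2 are fine, but Step 3 fails as written, and it is exactly the step the paper's argument avoids.

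The reducedness claim is false. For a reduced ring $R$ and a non-zero-divisor $h$, one has $R[u]/(hu)\cong R\times_{R/h}(R/h)[u]$. So $R[u]/(hu)$ is reduced if and only if $R/h$ is. A nonzero nilpotent $\bar g\in R/h$ produces the nonzero nilpotent $gu$; for instance $(xu)^2=0$ in $k[x,u]/(x^2u)$.

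Nothing in the setup makes the divisor of $\bar h$ reduced. For example, $\bar h^2$ is an equally valid Hasse invariant. When the divisor is not reduced, $B_r$ is strictly smaller than $\mathcal O^\circ(U(r))$. Take $A=\OEv\langle x\rangle$ and $h=x^2$. The function $y=p^{r/2}/x=p^{-r/2}xu_r$ satisfies $|y|\le 1$ on $U(r)$ and $y^2=u_r$. But $y\notin B_r$, because $u$ is not a square in $B_r/\mathfrak m_C B_r=\bar k[x,u]/(x^2u)$.

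In general, the defect $c$ in an inclusion $p^{c}\mathcal O^\circ(U(r))\subset B_r$ depends on the singularities of the Hasse divisor, and you have no control over it. So the bound $\|G\|_{U(r)}\le|p|^{3/2}$ does not give $G\in p^{1-Nr}B_r$; both the ``$p^0$'' claim and the ``$p^{-\epsilon}$'' fallback are unproved.

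The iteration fallback has the same defect. Each application of Step 1 only gives information on $U(0)$. Summing the corrections in $B_r$ would require the pole orders to grow at most linearly in the $p$-adic precision. That is an integral statement on $U(r)$, which sup-norm information does not supply.

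\textbf{The missing idea:} approximate $F$ on $U(r_0)$ in the integral topology of $B_{r_0}$, not on $U(0)$.
\begin{itemize}
\item Pass to $E'$ containing $p^{r_0}$. Since $F\in B_{r_0}[1/p]$ and $B_{r_0}$ is a quotient of $A'\langle u\rangle$, truncate a power-series representative. Using $u_{r_0}=p^{r_0}h^{-1}$, this gives a Laurent polynomial $f\in A'[1/h,1/p]$ with $F-f\in p^2B_{r_0}$.
\item This error maps into $p^2B_r$ for every $r\le r_0$, and into $p^2\widehat{A'[1/h]}$ on the ordinary tube. Hence $f\in p\widehat{A'[1/h]}$.
\item The isomorphisms $A'[1/h]/p^n\cong\widehat{A'[1/h]}/p^n$ give $f=pa/h^N$ with $a\in A'$. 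Therefore $F=p^{1-Nr}au_r^N+p^2z_r$ with $z_r\in B_r$ for all $r\le r_0$.
\end{itemize}
This route needs no sup-norm comparison, no shrinking of the radius, and no hypothesis on the Hasse divisor.
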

\begin{proof}
Choose a finite extension $E'/E$ containing $p^{r_0}$ and the field
of definition of $F$, and put
$A'=A\widehat\otimes_{\OEv}\mathcal O_{E'}$.
Use the analogous model over $\mathcal O_{E'}$ temporarily.
Finite Laurent expressions in $h$ are dense in $B_{r_0}[1/p]$ for
the topology defined by this model. Choose
$f\in A'[1/h,1/p]$ such that
\[
 F-f\in p^2B_{r_0}.
\]
Restriction to the ordinary model takes $B_{r_0}$ into
$\widehat{A'[1/h]}$. Hence $f\in p\widehat{A'[1/h]}$.
The natural map
\[
 A'[1/h]/p^n\longrightarrow\widehat{A'[1/h]}/p^n
\]
is an isomorphism for every $n$. Clearing a power of $p$ in $f$
therefore shows that $f\in pA'[1/h]$. Write $f=pa/h^N$ with
$a\in A'$.

For $r\leq r_0$, restriction takes $B_{r_0}$ into $B_r$ and gives
\[
 F=p^{1-Nr}a u_r^N+p^2z_r,\qquad z_r\in B_r.
\]
This proves the assertion after scalar extension to $\OC$.
Taking the largest denominator exponent proves the finite-family
version.
\end{proof}

\begin{proposition}[Frobenius congruence on original coordinates]
\label{prop:coordinates}
There are $N\geq0$ and $r_1>0$ such that
\[
 \Phi^*f-f^q\in p^{1-Nr}B_r\qquad(f\in A,
 \ 0<r\leq r_1,
 \ Nr<1).
\]
The expression $f$ on the right is restriction from the original
formal chart.
\end{proposition}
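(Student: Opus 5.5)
The plan is to reduce the statement to Lemma~\ref{lem:poles}, applied to finitely many functions. The input is the congruence on the ordinary locus. By Lemma~\ref{lem:canonical_pseudo_frob}, the reduction mod $p$ of $\widehat{\Phi}^{\mathrm{ord}}_K$ is the $q$-Frobenius: it lifts the $q_0$-Frobenius on $\mathrm{BT}$, and $\varpi$ is formally étale. Since a formally étale map has a unique compatible lift of the relative Frobenius, the lift downstairs is the absolute $q$-Frobenius on $\cS^{\mathrm{ord}}_k$.

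Concretely, take a formal affine chart $\operatorname{Spf} A$ on which $\mathscr L$ is trivialized. By Lemma~\ref{lem:global-height}, after shrinking we may assume $\Phi$ maps the chart's tube over $U(r_1)$ back into the same chart. Then for every $f\in A$ the function $\Phi^*f$ is analytic on $U(r_1)$. On the ordinary part $\operatorname{Spf}\widehat{A[1/h]}$, $\Phi$ agrees with the generic fiber of $\widehat{\Phi}^{\mathrm{ord}}_K$, by the construction of $\Phi$ as the other projection of the extended Hecke section (uniqueness of germs in Lemma~\ref{lem:section}). It therefore satisfies
\[
 \Phi^*f - f^q \in p\,\widehat{A[1/h]}.
\]
We also use that $\widehat{\Phi}^{\mathrm{ord}}_K$ is defined over $\OEv$, so $F=\Phi^*f-f^q$ is defined over a finite extension of $E$.

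Next, obtain uniformity in $f$. Choose finitely many topological generators $x_1,\dots,x_m$ of $A$ over $\OEv$; this is possible because $\cS$ is of finite type, so $A$ is topologically of finite type. Apply Lemma~\ref{lem:poles} to each $F_i=\Phi^*x_i-x_i^q$. This gives an integer $N$ (the maximum of the $N_i$) and a radius $r_1$ such that $F_i\in p^{1-Nr}B_r$ for all $0<r\le r_1$ with $Nr<1$.

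To pass to arbitrary $f$, use that the map $f\mapsto \Phi^*f - f^q$ is compatible with sums and products modulo $p^{1-Nr}B_r$:
\begin{align*}
 \Phi^*(fg)-(fg)^q &= \Phi^*f\,(\Phi^*g-g^q) + g^q(\Phi^*f-f^q),\\
 \Phi^*(f+g)-(f+g)^q &= (\Phi^*f-f^q)+(\Phi^*g-g^q) - \textstyle\sum_{0<i<q}\binom{q}{i}f^ig^{q-i},
\end{align*}
and $p\mid\binom qi$ with $p\in p^{1-Nr}B_r$. This needs $\Phi^*f\in B_r$ for $f\in A$, i.e.\ that $\Phi$ is integral on the model. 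That is exactly the forward preservation of charts from Lemma~\ref{lem:global-height}, which I take to give $\Phi^*A\subset B_r$. The congruence therefore holds on the subring $A_0\subset A$ generated over $\OEv$ by the $x_i$. On scalars in $\OEv$ it holds because $\Phi$ is $\OEv$-linear and $a^q\equiv a\bmod p$ for $a$ in the unramified ring $\OEv$ with residue field $\mathbb F_q$. Finally, $p^{1-Nr}B_r$ is $p$-adically closed, since $B_r$ is $p$-adically complete and $\OC$-flat by Lemma~\ref{lem:flat}, and $\Phi^*$ is continuous. By $p$-adic density of $A_0$ in $A$, the congruence extends to all $f\in A$.

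The main obstacle I expect is the identification of the ordinary congruence: checking that the analytic map $\Phi$, restricted to $U(0)$, really is the generic fiber of a formal morphism whose reduction is the $q$-power Frobenius of $A[1/h]/p$. The statement of Lemma~\ref{lem:canonical_pseudo_frob} only says the reduction is purely inseparable finite flat. I would first try to pin down its degree and form via the formally étale map $\varpi$ together with Proposition~\ref{prop:frob_lift}, using uniqueness of lifts of Frobenius through formally étale morphisms.
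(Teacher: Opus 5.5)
Your proposal is correct and is essentially the paper's proof: finitely many topological generators, Lemma~\ref{lem:poles} applied simultaneously, additivity and multiplicativity of the $q$-power modulo $p^{1-Nr}$, the congruence $c^q\equiv c\pmod p$ on $\OEv$, and $p$-adic continuity (incidentally, $\Phi^*f\in B_r$ in your product identity follows inductively from the congruence itself, so Lemma~\ref{lem:global-height} is only needed to make $\Phi^*f$ analytic on $U(r)$). The paper simply takes the ordinary congruence $\Phi^*f-f^q\in p\widehat{A[1/h]}$ as an input from the formal Frobenius lift, and your formal-\'etaleness sketch would not supply it on its own: lying over the $q$-Frobenius of $\mathrm{BT}$ does not rule out the $q$-Frobenius composed with a nontrivial automorphism over $\mathrm{BT}$, such as a prime-to-$p$ Hecke operator, so that identification has to come from the construction of $\widehat{\Phi}^{\mathrm{ord}}_K$.
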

\begin{proof}
Choose finitely many topological $\OEv$-algebra generators of $A$
and also include $h$ in the list. For each of these elements,
$\Phi^*f-f^q$ restricts to $p\widehat{A[1/h]}$. Apply Lemma~\ref{lem:poles} simultaneously.
Modulo $p^{1-Nr}$ the $q$-power operation is additive and
multiplicative. Moreover, $c^q\equiv c\pmod p$ for $c\in\OEv$.
The congruence therefore extends to polynomials in the chosen
generators, and then by $p$-adic continuity to $A$.
\end{proof}

\begin{corollary}[Height formula]\label{cor:height}
If $(N+q)r<1$, then
\[
 |h(\Phi(x))|=|h(x)|^q\qquad(x\in U(r)).
\]
Consequently, for sufficiently small $\eps>0$, $\Phi$ maps
$U(\eps/q)$ into $U(\eps)$.
\end{corollary}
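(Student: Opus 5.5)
Apply Proposition~\ref{prop:coordinates} to the single element $f=h$. This gives
\[
 \Phi^*h = h^q + p^{1-Nr}z,\qquad z\in B_r,\quad 0<r\leq r_1,\ Nr<1.
\]
Evaluating at a point $x\in U(r)$ produces $h(\Phi(x)) = h(x)^q + p^{1-Nr}z(x)$. The plan is to show that the error term is strictly smaller in absolute value than the main term, so that the ultrametric inequality forces $|h(\Phi(x))|=|h(x)|^q$.

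\textbf{Bounding the two terms.} By Lemma~\ref{lem:flat}, $B_r$ is a $p$-adically complete flat $\OC$-model of the affinoid algebra of $U(r)$. Its elements therefore have supremum norm at most $1$ on $U(r)$, since $B_r$ is generated over $A_C$ by $u_r=p^r/h$ and $|u_r|\leq 1$ there. Hence $|z(x)|\leq 1$ and the error has absolute value at most $|p|^{1-Nr}$. On the other hand, $x\in U(r)$ means $|h(x)|\geq |p|^r$, so $|h(x)^q|\geq |p|^{qr}$. The hypothesis $(N+q)r<1$ is exactly $qr<1-Nr$, so
\[
 |p|^{1-Nr}<|p|^{qr}\leq |h(x)|^q,
\]
and the claimed equality follows. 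I also need $r\leq r_1$ to be allowed. The statement implicitly restricts to radii where Proposition~\ref{prop:coordinates} and the definition of $\Phi$ on $U(r)\subset S(r_0)$ apply, so I would take $r\leq\min(r_0,r_1)$ throughout. One point to check is that $\Phi^*h$ makes sense on the chart, i.e.\ that $\Phi(x)$ lies in the same formal affine chart. I would take this from the forward chart preservation promised by Lemma~\ref{lem:global-height}, or simply restrict to a finite cover by charts after shrinking.

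\textbf{The consequence.} Choose $\eps>0$ so small that $\eps\leq\min(r_0,r_1)$ and $(N+q)\eps/q<1$. Setting $r=\eps/q$, the height formula applies on $U(\eps/q)$. For $x\in U(\eps/q)$ it gives
\[
 |h(\Phi(x))|=|h(x)|^q\geq |p|^{\eps},
\]
so $\Phi(x)\in U(\eps)$.

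The main obstacle is the norm bound $|z(x)|\leq 1$, which requires $B_r\subset\mathcal O^+(U(r))$. This holds because $B_r$ is topologically generated by power-bounded elements. The remaining care is only in bookkeeping the chart-compatibility of $\Phi$.
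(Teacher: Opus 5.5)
Your proof is correct and is essentially the paper's argument. Both write $\Phi^*h=h^q+e$ with $e\in p^{1-Nr}B_r$ from Proposition~\ref{prop:coordinates}, use $|h(x)|^q\geq|p|^{qr}>|p|^{1-Nr}\geq|e(x)|$ on $U(r)$, and conclude by the ultrametric inequality. Your additional remarks only spell out what the paper's three-line proof leaves implicit: that $B_r$ maps into $\mathcal O^+(U(r))$, the radius bookkeeping $r\leq\min(r_0,r_1)$, and the need for $\Phi(x)$ to stay in the chart, which the paper secures separately in Lemma~\ref{lem:global-height}.
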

\begin{proof}
Write $\Phi^*h=h^q+e$, with $e\in p^{1-Nr}B_r$.
At any point of $U(r)$, $|h|^q\geq |p|^{qr}>|p|^{1-Nr}\geq|e|$.
The ultrametric inequality proves the assertion. This argument proves
containment, not surjectivity or finiteness.
\end{proof}

Let $\cS_K/\OEv$ be the canonical integral model of the Shimura variety. For the compactness arguments We use the associated Berkovich spaces.

\begin{lemma}[Forward charts and the global height]
\label{lem:global-height}
There is a continuous function $w:S^{\mathrm{Berk}}\to[0,1]$ such that
$S(r)=\{w\leq r\}$ for $0\leq r<1$ and $S(0)=\{w=0\}$.
After shrinking the radius of definition of $c$, one has
\begin{equation}\label{eq:global-height}
 w(\Phi(x))=q\,w(x)\qquad(x\in S(b))
\end{equation}
for some $b>0$ with $qb<1$.
Moreover, a finite affine cover of $\cS$ can be chosen such that, if
$U_i$ denotes the generic fiber of the completion of its $i$-th chart,
then $\Phi(U_i\cap S(b))\subset U_i$.
\end{lemma}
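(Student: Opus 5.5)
I would build the function $w$ from the local height functions on a finite affine cover and then reduce the global statement to the local Corollary~\ref{cor:height}. Since $\cS$ is proper over $\OEv$, fix a finite cover by formal affine charts $\operatorname{Spf}A_i$ on which $\mathscr L$ is trivial mod $p$, with lifted equations $h_i\in A_i$. On the generic fiber $U_i$ put
\[
 w_i(x)=\min\bigl(1,\ -\log_{|p|}|h_i(x)|\bigr).
\]
This is continuous on the Berkovich space, since $x\mapsto |h_i(x)|$ is continuous. On overlaps $h_i=\epsilon h_j+p g$ with $\epsilon$ an integral unit. Hence wherever $|h_j|>|p|$ we get $|h_i|=|h_j|$, and wherever $|h_j|\le|p|$ both truncated functions equal $1$. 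So the truncated functions agree on $U_i\cap U_j$. Because the $U_i$ are closed analytic domains covering $S^{\mathrm{Berk}}$ (specialization to the finitely many opens of the special fiber), the $w_i$ glue to a continuous function $w$. By construction $S(r)=\{w\le r\}$ for $r<1$, and $w=0$ exactly where $|h|=1$, which is the ordinary tube $S(0)$.

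For \eqref{eq:global-height}, first apply Proposition~\ref{prop:coordinates} and Corollary~\ref{cor:height} chartwise. The cover is finite, so there is a uniform $N$ and $r_1$. Then for $x\in U_i(r)$ with $(N+q)r<1$, the identity $|h_i(\Phi(x))|=|h_i(x)|^q$ holds, provided $\Phi(x)$ lies in the same chart $U_i$. This is exactly the circularity the lemma must resolve: Corollary~\ref{cor:height} computes $\Phi^*h_i$ as a function on $U_i(r)$ only when $\Phi$ lands in $U_i$. So the real content is the chart-preservation statement $\Phi(U_i\cap S(b))\subset U_i$.

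To prove chart preservation I would use the special fiber. On the ordinary tube, $\Phi$ is the generic fiber of $\widehat\Phi^{\mathrm{ord}}_K$, whose reduction is a purely inseparable finite map (Lemma~\ref{lem:canonical_pseudo_frob}). By Proposition~\ref{prop:coordinates}, its reduction agrees with the $q$-Frobenius on coordinates. Now $q$-Frobenius preserves every Zariski open of $\cS_k$, so $\Phi(U_i\cap S(0))\subset U_i$, because the specialization of $\Phi(x)$ equals the Frobenius image of the specialization of $x$. To pass to $S(b)$ I would use the congruence $\Phi^*f\equiv f^q$ modulo $p^{1-Nr}B_r$ for all $f\in A_i$. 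Together with the fact that a point of $S^{\mathrm{Berk}}$ lies in $U_i$ iff its specialization lies in the $i$-th open, this shows that on $U_i(r)$ the specialization of $\Phi(x)$ is the Frobenius of the specialization of $x$ whenever $1-Nr>0$. Hence $\Phi(x)\in U_i$. Here I would require $|f(\Phi(x))|\le1$ for the generators $f$ of $A_i$, which the congruence gives because $|f^q|\le1$ and the error has norm $<1$.

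Alternatively, one can argue by compactness: the sets $\{w\le r\}$ shrink to $S(0)$ as $r\to0$, and $\Phi^{-1}(U_i)$ is a neighbourhood of $U_i\cap S(0)$ inside the domain of $\Phi$. This gives some $b>0$ with $\Phi(U_i\cap S(b))\subset U_i$. I expect this to be the main obstacle, since $U_i$ is closed rather than open, so one really needs the reduction argument rather than pure topology.

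Once chart preservation holds, take $b$ with $(N+q)b<1$ and $qb<1$. Corollary~\ref{cor:height} gives $|h_i(\Phi(x))|=|h_i(x)|^q$. Taking $-\log_{|p|}$, and noting that both sides stay below the truncation level $1$ because $qb<1$, yields $w(\Phi(x))=q\,w(x)$.
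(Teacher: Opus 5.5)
\textbf{Gap: the chart-preservation step is circular.} Your construction and gluing of $w$ match the paper, and you correctly identify chart preservation as the real content of the lemma. But your proof of it fails.

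You use the congruence $\Phi^*f\equiv f^q \bmod p^{1-Nr}B_r$ for $f\in A_i$ (Proposition~\ref{prop:coordinates}). That congruence only makes sense once $\Phi^*f$ is an analytic function on $U_i(r)$. For $f$ in the completed ring $A_i$, this means you must already know $\Phi(U_i(r))\subset U_i$, which is exactly what you are trying to prove. The paper in fact defers this to the present lemma.

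The same problem affects your claim that, for $x\in U_i(r)$ with $r>0$, the specialization of $\Phi(x)$ is the Frobenius of the specialization of $x$. To compute the specialization of $\Phi(x)$ you need coordinates at $\Phi(x)$. Your purely topological alternative fails too, as you suspected. $U_i$ is closed, and $\Phi(U_i(0))$ meets its topological boundary (for instance at the Shilov point of the special fiber), so $\Phi^{-1}(U_i)$ need not be a neighbourhood of $U_i(0)$.

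\textbf{Missing idea: pass through an open algebraic target first.} The paper does this in four steps.
\begin{enumerate}
\item Since the special-fiber map of the formal lift is the identity on underlying spaces, $\Phi(U_i(0))\subset U_i\subset(\mathscr U_{i,E})^{\mathrm{Berk}}$. This is the analytification of the algebraic Zariski open $\mathscr U_{i,E}$, and it is open in $S^{\mathrm{Berk}}$.
\item Because this target is open, the compactness argument now works. The $U_i(r)$ decrease to $U_i(0)$ and $\Phi$ is continuous, so $\Phi(U_i(r))\subset(\mathscr U_{i,E})^{\mathrm{Berk}}$ for small $r$.
\item On that open, finitely many algebraic generators $f_{ij}\in R_i$, together with $h_i$, are honest analytic functions. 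So $\Phi^*f_{ij}$ is defined on $U_i(r)$ with no circularity. The formal Frobenius property gives $\|\Phi^*f_{ij}-f_{ij}^q\|_{U_i(0)}\le|p|$. Convergence of sup norms on the nested compacts upgrades this to $\le|p|^\gamma$ on $U_i(r)$ for a fixed $0<\gamma<1$.
\item Hence $|\Phi^*f_{ij}|\le 1$, and inside $(\mathscr U_{i,E})^{\mathrm{Berk}}$ these inequalities cut out exactly $U_i$.
\end{enumerate}
The height identity then follows directly from $\Phi^*h_i=h_i^q+e_i$ with $\|e_i\|\le|p|^\gamma$, choosing $qb<\gamma$. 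You should not invoke Corollary~\ref{cor:height} here, since it presupposes the chart preservation being proved.
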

\begin{proof}
Choose a finite affine cover $\mathscr U_i=\operatorname{Spec}R_i$ of
$\cS$ on whose special fibers the Hasse line is trivial. Choose local
equations $h_i\in R_i$ lifting the Hasse invariant. On the compact
analytic domain $U_i$ put
\[
 w_i(x)=\min\{v(h_i(x)),1\},\qquad v(0)=+\infty.
\]
These are continuous. On overlaps the equations differ by a unit
modulo $p$; the unit and its inverse have norm at most one. Thus the
truncated valuations agree. Since $\cS$ is proper, the $U_i$ cover
$S^{\mathrm{Berk}}$ and form a finite closed cover of this compact
Hausdorff space. The $w_i$ glue to a continuous $w$ with the stated
sublevel sets.

Put $U_i(r)=U_i\cap S(r)$. The formal ordinary Frobenius lift has
the same underlying special-fiber map as $q$-Frobenius, which is the
identity on the underlying topological space. Consequently,
$\Phi(U_i(0))\subset U_i$. In particular this image is contained in
the Berkovich open subset $(\mathscr U_{i,E})^{\mathrm{Berk}}$ of
$S^{\mathrm{Berk}}$.
The compact sets $U_i(r)$ decrease to $U_i(0)$. Continuity of $\Phi$
therefore implies, after shrinking, that
\[
 \Phi(U_i(r))\subset(\mathscr U_{i,E})^{\mathrm{Berk}}.
\]
This statement only concerns the algebraic open subset and does not
yet claim that its integral-coordinate domain $U_i$ is preserved.

Choose finitely many $\OEv$-algebra generators $f_{ij}$ of $R_i$,
and add $h_i$ to the list. Their pullbacks by $\Phi$ are now analytic
on $U_i(r)$. On $U_i(0)$ the functions
$\Phi^*f_{ij}-f_{ij}^q$ have norm at most $|p|$, by the formal
Frobenius property. Fix $0<\gamma<1$. The shrinking-norm argument
on the nested compact sets gives, on a common smaller radius,
\[
 \|\Phi^*f_{ij}-f_{ij}^q\|_{U_i(r)}\leq|p|^\gamma
\]
for all $i,j$. Since the source coordinates have norm at most one,
all $\Phi^*f_{ij}$ have norm at most one. Inside
$(\mathscr U_{i,E})^{\mathrm{Berk}}$ these inequalities describe
the generic fiber of the formal completion, so $\Phi(U_i(r))\subset U_i$.

The same estimate for $h_i$ gives
$\Phi^*h_i=h_i^q+e_i$ with $\|e_i\|\leq|p|^\gamma$.
Choose $b>0$ smaller than all the preceding radii and with $qb<\gamma$.
Then $|h_i(x)|^q>|e_i(x)|$ on $U_i(b)$, proving
$v(h_i(\Phi(x)))=qv(h_i(x))$. Both sides are less than one, so this
is precisely \eqref{eq:global-height}. The finitely many charts give
one radius $b$ valid on all of $S(b)$.
\end{proof}

\begin{proposition}[Finite etale surjective ordinary branch]
\label{prop:finite-branch}
Let $n=\dim S$. There is a rational $\eps_*>0$ such that for every
rational $0<r\leq\eps_*$ the map
\[
 \Phi:S(r/q)\longrightarrow S(r)
\]
is finite etale and surjective of degree $q^n$. Thus one choice of
$0<\eps\leq\eps_*$ works simultaneously for all the transitions
\[
 \Phi:S(\eps/q^{m+1})\longrightarrow S(\eps/q^m),\qquad m\geq0.
\]
More precisely, $c(S(r/q))$ is an open-and-closed analytic subspace of
$t^{-1}(S(r))$, and all these subspaces are obtained by restricting
one fixed finite etale factor.
\end{proposition}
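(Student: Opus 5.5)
The plan is to realize $\Phi$ on $S(r/q)$ as the restriction of the finite étale source projection $s=s_1\colon\mathcal H_1\to S$ to a union of connected components of $s^{-1}(S(r))$. The candidate components come from the Hecke section. Let $c\colon S(b)\to t^{-1}(S(b))\subset\mathcal H_1$ be the analytic Hecke section from Lemma~\ref{lem:section}, with $t=u_1$ the target projection and $b$ as in Lemma~\ref{lem:global-height}. Since $t$ is finite étale, a section of it is an open and closed immersion. Hence $Z:=c(S(b))$ is an open-and-closed subspace of $t^{-1}(S(b))$, and $t|_Z$ is an isomorphism onto $S(b)$. By construction $\Phi=s\circ c$. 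I take $\eps_*\le qb$ small enough that $S(\eps_*/q)$ lies in the interior $\{w<b\}$ of $S(b)$; every later choice of $r$ only restricts this one fixed factor $Z$, which gives the final "more precisely" clause.

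\emph{Step 1: identify the branch.} By the height formula \eqref{eq:global-height}, for $x\in S(b)$ we have $w(\Phi(x))=q\,w(x)$. Therefore
\[
Z\cap s^{-1}(S(r))=c\bigl(\Phi^{-1}(S(r))\cap S(b)\bigr)=c(S(r/q)),
\]
and $\Phi|_{S(r/q)}$ is identified with $s|_{c(S(r/q))}$ landing in $S(r)$. I then show $c(S(r/q))$ is open and closed in $s^{-1}(S(r))$. Openness: $c(S(r/q))$ is contained in $c(\{w<b\})$, which is open in $\mathcal H_1$ because $Z$ is open in $t^{-1}(S(b))$ and $t^{-1}(\{w<b\})$ is open; intersecting with $s^{-1}(S(r))$ gives openness. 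Closedness: I would argue on Berkovich spaces, where $c(S(r/q))$ is the image of a compact set and hence closed in the Hausdorff space $\mathcal H_1^{\mathrm{Berk}}$. I then transfer this back to the adic setting using that these are finite unions of rational domains, so clopenness can be checked on associated Berkovich spaces. Consequently $\Phi\colon S(r/q)\to S(r)$ is a restriction of the finite étale map $s$ to a clopen piece of its preimage, hence finite étale.

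\emph{Step 2: degree and surjectivity.} A finite étale map has locally constant degree on the target. First I show every connected component of $S(r)$ meets $S(0)$. For this I use the specialization map to $\cS_k$: the components of $S(r)$ are controlled by the tubes over the connected components of $\cS_k$, and the ordinary locus is dense in each such component because $\zeta$ is smooth and the ordinary zip stratum is open dense. So it suffices to compute the degree over $S(0)$. There $\Phi$ is the generic fiber of the formal pseudo-Frobenius lift $\widehat\Phi^{\mathrm{ord}}_K$. By Lemma~\ref{lem:canonical_pseudo_frob}, its reduction mod $p$ is finite flat and purely inseparable, and it agrees topologically with the $q$-Frobenius of the smooth $n$-dimensional $\cS^{\mathrm{ord}}_k$, so it has degree $q^n$. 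A finite flat formal map of degree $q^n$ has generic fiber of the same degree. Hence $\Phi$ has degree $q^n$ everywhere; in particular it is positive, so $\Phi$ is surjective onto $S(r)$.

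\emph{Step 3: uniformity.} The argument works for every rational $0<r\le\eps_*$, using the single factor $Z$. Taking $r=\eps/q^m$ gives all the transitions $S(\eps/q^{m+1})\to S(\eps/q^m)$ at once.

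I expect the main obstacle to be the closedness and connectivity bookkeeping: showing that $c(S(r/q))$ is genuinely clopen in $s^{-1}(S(r))$, and not merely open in $t^{-1}(S(b))$, and that no component of $S(r)$ misses the ordinary tube. For the former I would first try the compactness argument in Berkovich spaces sketched in Step 1. For the latter I would use properness of $\cS$ together with the anticontinuity of specialization: the tube over a connected component of $\cS_k$ is connected, and shrinking the rational domain $S(r)$ does not disconnect it from $S(0)$.
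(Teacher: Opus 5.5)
Your Step~1 is the paper's argument. The section is an open-and-closed immersion, and the height identity $w(\Phi(x))=q\,w(x)$ cuts out $c(S(r/q))$ inside the part of the section over the open neighbourhood $\{w<b\}$; this is the paper's $\Omega=c(V)$ with $V=\{w<a\}$, and it gives openness. Compactness of $S(r/q)^{\mathrm{Berk}}$ in a Hausdorff space gives closedness.

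You have interchanged the labels $s$ and $t$ relative to the statement (there $c$ is a section of $s$ and $\Phi=t\circ c$), which is harmless. However, your ``one fixed factor'' should not be $c(S(b))$, which is a factor for the other projection. It should be $c(S(\eps_*/q))$, a factor of the preimage of $S(\eps_*)$ under the $\Phi$-direction projection. The height identity then shows that $c(S(r/q))$ is its restriction to $S(r)$.

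The real divergence, and the weak point, is Step~2. The paper never looks at connected components. The locus $B\subset S(R)^{\mathrm{Berk}}$ where the rank of the factor differs from $q^n$ (rank $0$ included) is closed in a compact space and disjoint from $S(0)$. Hence $\min_B w>0$, and $\eps_*$ is simply chosen below this minimum; this is why the statement only claims \emph{some} $\eps_*$.

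Your route would give an explicit $\eps_*$ depending only on $b$, but it needs every connected component of $S(r)$ to meet $S(0)$, and what you offer is not a proof. Connectedness of the tube over a component of $\cS_k$ and anticontinuity of specialization say nothing about the subdomain $\{|h|\geq|p|^r\}$. Its part $\{0<w\leq r\}$ lies in the tube of the non-ordinary divisor and could a priori form components of its own. Saying that shrinking does not disconnect it from $S(0)$ is the claim itself.

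The claim is true, and one way to prove it is a monotone path. Take $x$ in a nonempty clopen $B'\subset S(r)^{\mathrm{Berk}}$ and pass to $L=\mathcal H(x)$. Take \'etale coordinates $z_1,\dots,z_n$ centred at the $L$-point over $x$ in its residue polydisc, write $h=\sum_\alpha a_\alpha z^\alpha$, and let $\eta_t$ be the Gauss point of $\{|z_i|\leq t\}$. Then $|h(\eta_t)|=\max_\alpha|a_\alpha|t^{|\alpha|}$ is non-decreasing in $t$ and tends to $1$ as $t\to 1$. This holds because $\bar h$ is nonzero in the completed local ring, by smoothness plus density of the ordinary locus (which you correctly derive from smoothness of $\zeta$). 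Pushed down to $S$, this path stays in $S(r)$, hence in $B'$, and ends at a point with $w=0$. With this inserted your Step~2 is complete; otherwise the paper's compactness shrink is the shorter repair.

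Finally, the degree $q^n$ over $S(0)$ should be deduced from the congruence $\Phi^*f\equiv f^q \pmod p$ on the ordinary chart. That congruence says the reduction \emph{is} the $q$-Frobenius of the smooth $n$-dimensional $\cS_k^{\mathrm{ord}}$. Agreement on underlying spaces alone does not determine a degree.
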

\begin{proof}
Choose $b$ as in Lemma~\ref{lem:global-height} and fix $0<a<b$.
The set $V=\{w<a\}$ is a Berkovich open neighborhood of the ordinary
tube, contained in the domain of $c$. Since $s$ is finite etale, its
section over $V$ is an open-and-closed immersion into $s^{-1}(V)$.
In particular,
\[
 \Omega=c(V)
\]
is an open analytic subspace of $\cH^{\mathrm{Berk}}$.

Fix a positive rational $R<\min\{a,1\}$. In particular $R/q<a$.
The height identity gives an equality of analytic domains
\begin{equation}\label{eq:finite-factor}
 Z_R:=\Omega\cap t^{-1}(S(R))=c(S(R/q)).
\end{equation}
Indeed a point of $\Omega$ is uniquely $c(x)$ for $x\in V$, and
$t(c(x))\in S(R)$ is equivalent to $qw(x)\leq R$.
The left side of \eqref{eq:finite-factor} is open in $t^{-1}(S(R))$.
The right side is compact, since $S(R/q)$ is compact. The ambient
Berkovich space is Hausdorff, so this image is closed as well.
It follows that $Z_R$ is an open-and-closed factor of the finite
etale space $t^{-1}(S(R))$ over $S(R)$. Hence
\[
 Z_R\longrightarrow S(R)
\]
is finite etale. Via $c$ it is exactly $\Phi:S(R/q)\to S(R)$.

Let $B\subset S(R)^{\mathrm{Berk}}$ be the locus where this rank
differs from $q^n$, including rank zero. The rank function is locally
constant, so $B$ is closed in the compact space $S(R)^{\mathrm{Berk}}$.
It is disjoint from $S(0)=\{w=0\}$. If $B$ is nonempty, continuity
of $w$ gives $\min_B w>0$. Choose rational $0<\eps_*\leq R$ with
$\eps_*<\min_B w$; if $B$ is empty any such $\eps_*$ works.
The restriction of $Z_R$ to $S(\eps_*)$ now has constant rank $q^n$.

For every $0<r\leq\eps_*$, the height identity again gives
\[
 Z_R\times_{S(R)}S(r)=c(S(r/q)).
\]
\end{proof}

Choose a rational $\eps>0$ so small that $\eps\leq r_1$ and
$(N+q)\eps<1/2$. Set
\[
 r_m=\eps/q^m,\quad a_m=p^{r_m},\quad
 B_m=B_{r_m},\quad u_m=u_{r_m},\quad \delta=1/2.
\]
Thus $a_{m+1}^q=a_m$.

\begin{lemma}[The transition on the rational generator]
\label{lem:transition}
The analytic pullback induces a continuous $\OC$-algebra map
$\tau_m:B_m\to B_{m+1}$, and modulo $p^\delta$ it satisfies
\[
 \tau_m(f)=f^q\quad(f\in A),\qquad
 \tau_m(u_m)=u_{m+1}^q,\qquad
 \tau_m(c)=c\quad(c\in\OC).
\]
\end{lemma}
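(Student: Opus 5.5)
Working on a single formal affine chart $\operatorname{Spf}A$ with lifted Hasse equation $h$, I would build $\tau_m$ in three steps: first show that $\Phi^*$ sends $B_m$ into $B_{m+1}$, then compute $\Phi^*$ on the generators $f\in A$, then compute it on $u_m$.

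\textbf{Existence and continuity.} By Corollary~\ref{cor:height} (applicable because $(N+q)r_{m+1}\le(N+q)\eps<1$), $\Phi$ maps $U(r_{m+1})$ into $U(r_m)$. By Lemma~\ref{lem:global-height} it also preserves the chart $U$, after shrinking $\eps$ if necessary. Hence pullback gives a continuous map of affinoid algebras $B_m[1/p]\to B_{m+1}[1/p]$.

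To see that $B_m$ lands in $B_{m+1}$, I would use the bounds on generators. By Lemma~\ref{lem:global-height}, each $\Phi^*f$ has supremum norm at most one on $U(r_{m+1})$. The equation $hu_m=a_m$ shows $|u_m|\le 1$ on $U(r_m)$, so $\Phi^*u_m=a_m/\Phi^*h$ has norm at most one as well. The remaining point is to pass from power-bounded functions to elements of $B_{m+1}$. For the generators I would avoid a general integral-closure argument and instead use the explicit formulas below: each generator's image will be written as an element of $B_{m+1}$ plus an error in $p^{1-Nr_{m+1}}B_{m+1}$. Then $\tau_m$ is defined on generators and extended by $p$-adic continuity, using Lemma~\ref{lem:flat} for completeness and flatness. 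It is $\OC$-linear because $\Phi$ is defined over $E$ and we base change to $C$.

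\textbf{Congruence on $A$.} Proposition~\ref{prop:coordinates} gives $\Phi^*f=f^q+e$ with $e\in p^{1-Nr_{m+1}}B_{m+1}$. Since $Nr_{m+1}<1/2$, the error lies in $p^{\delta}B_{m+1}$, and so $\tau_m(f)\equiv f^q$ modulo $p^\delta$.

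\textbf{Congruence on $u_m$ (main obstacle).} This is the delicate step, because $u_m$ is a quotient. First, by Proposition~\ref{prop:coordinates} applied to $h$, write $\Phi^*h=h^q+e$ with $e\in p^{1-Nr_{m+1}}B_{m+1}$. Next, using $h=a_{m+1}u_{m+1}^{-1}$ on $U(r_{m+1})$, rewrite
\[
 \Phi^*h=h^q\bigl(1+e\,u_{m+1}^q/a_{m+1}^q\bigr)=h^q(1+\eta).
\]
Since $a_{m+1}^q=a_m=p^{r_m}$, the correction satisfies $\eta\in p^{1-Nr_{m+1}-r_m}B_{m+1}$. Because $(N+q)\eps<1/2$, this exponent is at least $1/2=\delta$, so $1+\eta$ is a unit whose inverse is $1+O(p^\delta)$. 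Then
\[
 \tau_m(u_m)=a_m/\Phi^*h=\bigl(a_{m+1}/h\bigr)^q(1+\eta)^{-1}=u_{m+1}^q(1+\eta)^{-1},
\]
which lies in $B_{m+1}$ and is congruent to $u_{m+1}^q$ modulo $p^\delta$. This simultaneously settles integrality of $\tau_m(u_m)$.

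The main care throughout is bookkeeping the exponents $Nr_{m+1}+r_m\le(N+q)\eps$ so that every error lies in $p^{\delta}B_{m+1}$. With that done, the congruence for constants $c\in\OC$ is immediate from $\OC$-linearity.
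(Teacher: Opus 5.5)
Your proposal is correct and follows the paper's proof essentially step for step. The paper likewise writes $\Phi^*h=h^q+e$ with $e\in p^{1-Nr_{m+1}}B_{m+1}$ and sets $z=eu_{m+1}^q/a_m\in p^{1-(N+q)r_{m+1}}B_{m+1}\subset p^\delta B_{m+1}$, which is your $\eta$. It then obtains $\Phi^*u_m=u_{m+1}^q(1+z)^{-1}$ and uses Proposition~\ref{prop:coordinates} to place the images of the original coordinates in $B_{m+1}$ with $f\mapsto f^q$ modulo $p^\delta$. Your extra remarks on chart preservation (Lemma~\ref{lem:global-height}) and on extending from generators (completeness and torsion-freeness, Lemma~\ref{lem:flat}) only make explicit what the paper's phrase ``together these formulas define $\tau_m$ on the whole model'' leaves implicit.
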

\begin{proof}
Write $\Phi^*h=h^q+e$ on $U(r_{m+1})$, where
$e\in p^{1-Nr_{m+1}}B_{m+1}$. In $B_{m+1}[1/p]$ put
\[
 z=\frac{e}{h^q}=\frac{e u_{m+1}^q}{a_m}
 \in p^{1-(N+q)r_{m+1}}B_{m+1}\subset p^\delta B_{m+1}.
\]
Then $1+z$ is a unit in $B_{m+1}$ and
\begin{equation}\label{eq:u-transition}
 \Phi^*u_m=\frac{a_m}{\Phi^*h}
 =u_{m+1}^q(1+z)^{-1}.
\end{equation}
Proposition~\ref{prop:coordinates} shows that the images of the
original coordinates also belong to $B_{m+1}$. Together these formulas
define $\tau_m$ on the whole model and prove the congruences.
The map fixes $\OC$; it is not absolute Frobenius on the scalars.
\end{proof}

\begin{lemma}[Coefficient twist]\label{lem:twist}
There are ring isomorphisms
\[
 \kappa_m:B_m/p^\delta\xrightarrow{\ \sim\ }
 B_{m+1}/p^{\delta/q}
\]
which act as inverse $q$-power Frobenius on coefficients in $\OC$,
as the identity on the copy of $A/p$, and send $u_m$ to $u_{m+1}$.
They satisfy
\[
 F_q\kappa_m=\overline\tau_m
 \quad\text{and}\quad
 \kappa_{m+1}\overline\tau_m
 =\overline\tau_{m+1}\kappa_m,
\]
where the second equality uses the appropriate reductions on each
side.
\end{lemma}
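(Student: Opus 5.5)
I will construct $\kappa_m$ explicitly from presentations and then check the two identities on topological generators. By Lemma~\ref{lem:flat},
\[
 B_m=A_C\langle u_m\rangle/(hu_m-a_m),
\]
and $a_m\in p^{r_m}\OC$ with $r_m\le\eps<\delta$. Since $B_m$ is flat, reduction modulo $p^\delta$ commutes with the presentation. So
\[
 B_m/p^\delta\cong (A/p\otimes_{k}\OC/p^\delta)[u_m]/(hu_m-a_m),
\]
where we use $A_C/p^\delta=A\otimes_{\OEv}\OC/p^\delta$ and note that $p$-adic completion does not matter modulo $p^\delta$. There is one subtlety. Modulo $p^\delta$ the factor $A$ is only $A/p^\delta$ and not $A/p$. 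To speak of ``the copy of $A/p$'' I will use smoothness of $A$ over $\OEv$ in the following way. Choose a lift of $A/p$ in $A/p^\delta$, i.e.\ a section of $A/p^\delta\to A/p$ over $\OC/p^\delta$. This section is provided by formal smoothness together with the fact that $\OC/p^\delta\to\OC/p^{\delta/q}$ is identified with a Frobenius-twisted version of itself via $x\mapsto x^{1/q}$. A cleaner route is to work with the tilt: the map $\OC/p^{\delta}\to\OC/p^{\delta/q}$, $x\mapsto x^{1/q}$, is a well-defined ring isomorphism, since the $q$-th root is well defined modulo $p^{\delta/q}$ on $\OC$. Call it $\varphi^{-1}$. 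I then define $\kappa_m$ as the base change $A_C/p^\delta\to A_C/p^{\delta/q}$ along $\varphi^{-1}$, which is $\varphi^{-1}$-semilinear, identity on $A$-coordinates, and sends $u_m\mapsto u_{m+1}$.

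The key steps are the following.

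First, I check that $\kappa_m$ respects the relation. It sends $hu_m-a_m$ to $hu_{m+1}-\varphi^{-1}(a_m)=hu_{m+1}-a_{m+1}$, using $a_{m+1}^q=a_m$ and the chosen compatible roots. The resulting map is therefore well defined.

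Second, I show it is bijective. The inverse is the analogous $\varphi$-semilinear map $u_{m+1}\mapsto u_m$. The ``identity on $A$'' part must be made meaningful over $A_C/p^\delta\to A_C/p^{\delta/q}$. Here I use that $A$ is defined over $\OEv$, an unramified ring whose Frobenius on $\OEv/p$ is identity-like modulo $p$ (because $c^q\equiv c$). Concretely, I write $A_C/p^\delta=(A/p^\delta)\otimes_{\OEv/p^\delta}\OC/p^\delta$ and I need a ring map $A/p^\delta\to A/p^{\delta/q}$ over the Frobenius of $\OEv/p^\delta$. Since $\delta\le1$, both rings are quotients of $A/p$, and the map is just reduction $A/p\to A/p$. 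This is why the statement says ``identity on $A/p$''.

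Third, I verify $F_q\kappa_m=\overline\tau_m$ as maps $B_m/p^\delta\to B_{m+1}/p^\delta$ (more precisely into $B_{m+1}/p^{\delta}$ via $F_q:B_{m+1}/p^{\delta/q}\to B_{m+1}/p^\delta$, which is well defined because $(x+p^{\delta/q}y)^q\equiv x^q\bmod p^\delta$). Both sides are ring maps, so it suffices to check on generators using Lemma~\ref{lem:transition}. For $f\in A$, I get $F_q\kappa_m(f)=f^q=\tau_m(f)$. For $u_m$, I get $u_{m+1}^q=\tau_m(u_m)$. For $c\in\OC$, I get $(c^{1/q})^q=c=\tau_m(c)$. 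Continuity and density then finish the argument.

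Fourth, for the second identity I compare the two sides on the same generators. On $f$ each side gives $f^q$ and on $u_m$ each side gives $u_{m+2}^q$. On $c$, both sides give $c^{1/q}$.

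The main obstacle is making $F_q$ and the mixed reductions precise so that all maps land in the stated quotients. I would handle this by noting that $F_q$ from $/p^{\delta/q}$ to $/p^\delta$ is well defined on $p$-torsion-free rings, which Lemma~\ref{lem:flat} supplies.
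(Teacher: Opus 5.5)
Your proposal is correct and follows the paper's argument: base-change the inverse $q$-Frobenius $\OC/p^\delta\xrightarrow{\sim}\OC/p^{\delta/q}$ through the polynomial presentation of $B_m/p^\delta$, send $u_m\mapsto u_{m+1}$ using $a_{m+1}^q=a_m$, and check both identities on $A/p$, $u_m$ and the scalars via Lemma~\ref{lem:transition}. The detour about lifting $A/p$ into ``$A/p^\delta$'' via smoothness is unnecessary: since $\delta\le1$, $\OC/p^\delta$ is killed by $p$, so $A_C/p^\delta=(A/p)\otimes_k(\OC/p^\delta)$ exactly, and $\varphi^{-1}$ fixes $k=\mathbb F_q$, which is what makes ``identity on $A/p$'' meaningful (torsion-freeness is likewise not needed for $F_q$ to be well defined).
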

\begin{proof}
Because $\delta\leq1$, reduction has the presentation
\[
 B_m/p^\delta=
 \bigl((A/p)\otimes_k(\OC/p^\delta)\bigr)[u_m]
 /(\bar h u_m-\bar a_m).
\]
Indeed, restricted power series become polynomials after reduction
modulo $p^\delta$. The map
\[
 F_q:\OC/p^{\delta/q}\xrightarrow{\ \sim\ }\OC/p^\delta
\]
is an isomorphism: surjectivity follows from algebraic closedness of
$C$, and injectivity follows from valuations. Its inverse fixes
$k=\mathbb F_q$ and takes $a_m$ to $a_{m+1}$. It therefore induces
the asserted isomorphism of the displayed presentations.
The two identities can be checked on $A/p$, $u_m$, and the coefficient
ring, using Lemma~\ref{lem:transition}.
\end{proof}

\begin{proposition}[Frobenius on the completed limit]
\label{prop:limit-frob}
Put
\[
 D=\colim_{m,\tau_m}B_m,\qquad B_\infty=\widehat D^{\,p}.
\]
Then $B_\infty$ is $\OC$-flat and Frobenius induces an isomorphism
\begin{equation}\label{eq:q-iso}
 F_q:B_\infty/p^{\delta/q}\xrightarrow{\ \sim\ }
 B_\infty/p^\delta.
\end{equation}
\end{proposition}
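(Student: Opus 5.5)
We prove flatness first, then the Frobenius isomorphism, working one level at a time with the twists $\kappa_m$ of Lemma~\ref{lem:twist}.

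\emph{Flatness.} Each $B_m$ is $\OC$-flat by Lemma~\ref{lem:flat}, i.e.\ torsion-free over the valuation ring $\OC$. A filtered colimit of torsion-free modules is torsion-free, so $D$ is $\OC$-flat. For the completion $B_\infty=\widehat D^{\,p}$ we argue as in Lemma~\ref{lem:flat}. The quotient $D/p^n$ is flat over $\OC/p^n$, since each $B_m/p^n$ is and the transitions are compatible. Hence the inverse system $(D/p^n)_n$ has surjective transition maps and flat terms. The limit $B_\infty$ is then $p$-torsion-free: if $p\cdot x=0$ in the limit, then $x$ vanishes modulo $p^{n-1}$ at every finite stage $n$, so $x=0$. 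A $p$-torsion-free module over the rank-one valuation ring $\OC$ is torsion-free, hence flat. We also note that $B_\infty/p^\delta=D/p^\delta=\colim_m B_m/p^\delta$, and likewise for $p^{\delta/q}$, because $p$-adic completion does not change reductions modulo powers of $p$.

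\emph{Frobenius.} Because $\delta/q<\delta$ and $p^{\delta}$ divides $p^{\delta/q\cdot q}$, the $q$-power map $x\mapsto x^q$ is a well-defined ring map $F_q:D/p^{\delta/q}\to D/p^\delta$. The key input is the pair of identities $F_q\kappa_m=\overline\tau_m$ and $\kappa_{m+1}\overline\tau_m=\overline\tau_{m+1}\kappa_m$ from Lemma~\ref{lem:twist}. The second identity says the $\kappa_m$ form a morphism of directed systems
\[
(B_m/p^\delta,\overline\tau_m)\longrightarrow(B_{m+1}/p^{\delta/q},\overline\tau_{m+1}).
\]
Since each $\kappa_m$ is an isomorphism, they induce an isomorphism on colimits
\[
\kappa:D/p^\delta\xrightarrow{\ \sim\ }D/p^{\delta/q}.
\]
This uses that the colimit over the shifted index $m+1$ is the same colimit.

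We then compute $F_q\circ\kappa$ on the image of $B_m/p^\delta$. It equals $F_q\kappa_m=\overline\tau_m$, which is exactly the transition map into the colimit. So $F_q\circ\kappa$ is the identity of $D/p^\delta$, and $F_q$ is surjective.

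For injectivity we compute $\kappa\circ F_q$ on $D/p^{\delta/q}$. Take $x\in B_{m+1}/p^{\delta/q}$ and write $x=\kappa_m(y)$ with $y\in B_m/p^\delta$. Then
\[
\kappa F_q(x)=\kappa_{m+1}(F_q\kappa_m y)=\kappa_{m+1}\overline\tau_m y=\overline\tau_{m+1}\kappa_m y=\overline\tau_{m+1}x,
\]
and $\overline\tau_{m+1}x$ is again $x$ in the colimit. So $\kappa$ is a two-sided inverse of $F_q$, which gives \eqref{eq:q-iso}.

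\emph{Main obstacle.} The step needing care is bookkeeping of the reductions. The identity $F_q\kappa_m=\overline\tau_m$ compares a map $B_m/p^\delta\to B_{m+1}/p^{\delta/q}\to B_{m+1}/p^\delta$ with $\overline\tau_m:B_m/p^\delta\to B_{m+1}/p^\delta$. Before using these identities in the colimit we must check that the $q$-power map really lands modulo $p^\delta$, i.e.\ that $(a+p^{\delta/q}b)^q\equiv a^q\pmod{p^\delta}$. This follows from the binomial expansion: the term $p^{\delta}b^q$ is divisible by $p^\delta$, and every middle term carries a factor of $q$, which is divisible by $p$ with $1\ge\delta$. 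The congruences of Lemma~\ref{lem:transition} hold modulo $p^\delta$, which is exactly the precision required.
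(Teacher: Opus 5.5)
Your proposal is correct and follows the paper's proof. Both arguments get flatness from the filtered colimit together with torsion-freeness of the $p$-adic completion, identify $B_\infty/p^s=D/p^s=\colim_m B_m/p^s$, and pass the commuting isomorphisms $\kappa_m$ to a colimit isomorphism $\kappa$ satisfying $F_q\kappa=\mathrm{id}$. Your separate check that $\kappa F_q=\mathrm{id}$ is redundant: once $\kappa$ is an isomorphism, $F_q=\kappa^{-1}$ follows, which is how the paper concludes. One small slip: the middle binomial coefficients $\binom{q}{i}$ are divisible by $p$ but not in general by $q$. Divisibility by $p$ is all your well-definedness check actually uses.
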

\begin{proof}
Filtered colimits of flat $\OC$-modules are flat. The $p$-adic
completion of a torsion-free $\OC$-module is torsion-free: for a
scalar $b\neq0$, compute using a cofinal sequence of scalar powers
divisible by $b$ and use cancellation before completion.
Hence both $D$ and $B_\infty$ are $\OC$-flat. The same argument
gives
\[
 B_\infty/p^s=D/p^s=\colim_m B_m/p^s
 \qquad(s>0).
\]

The commuting isomorphisms $\kappa_m$ induce an isomorphism
\[
 \kappa:\colim_m B_m/p^\delta
 \xrightarrow{\ \sim\ }
 \colim_m B_{m+1}/p^{\delta/q}.
\]
Dropping the first term does not change the filtered colimit. Since
$F_q\kappa_m=\overline\tau_m$, the composite $F_q\kappa$ is the
identity on $D/p^\delta$. Thus $F_q=\kappa^{-1}$, giving an
isomorphism, not merely a surjection. Passing to the completion gives
\eqref{eq:q-iso}.
\end{proof}

\begin{theorem}[Perfectoid algebra and the inverse limit]
\label{thm:local}
The algebra $R_\infty=B_\infty[1/p]$, with the topology defined by
$B_\infty$, is a perfectoid $C$-algebra. If $R_\infty^+$ is the
integral closure of $B_\infty$ in $R_\infty$, then
\[
 U_\infty=\Spa(R_\infty,R_\infty^+)
\]
is an affinoid perfectoid space representing
$\ilim_m U(r_m)^\diamond$.
\end{theorem}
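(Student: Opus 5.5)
The proof splits into two parts. First we show that $R_\infty$ is perfectoid, by checking Scholze's criterion. Then we identify $\Spa(R_\infty,R_\infty^+)$ with the inverse limit of diamonds, using that the transition maps are finite étale surjective.

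\textbf{Perfectoidness.} By Proposition~\ref{prop:limit-frob}, $B_\infty$ is $p$-adically complete and $\OC$-flat, so $R_\infty=B_\infty[1/p]$ is a complete Tate $C$-algebra with ring of definition $B_\infty$. We next need $B_\infty$ bounded. It is the completion of a colimit of integral models whose generic fibres are affinoids, so it is open in $R_\infty$ and bounded by construction. Its integral closure $R_\infty^+$ is then an open, integrally closed subring of $R_\infty^\circ$.

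The Frobenius isomorphism \eqref{eq:q-iso} $B_\infty/p^{\delta/q}\cong B_\infty/p^\delta$ gives surjectivity of Frobenius modulo a pseudo-uniformizer. Iterating $q$ as a power of $p$ gives the $p$-power Frobenius statement. Precisely, $F_q$ factors through $F_p^d$, so $F_p\colon B_\infty/p^{1/p\cdot \delta'}\to B_\infty/p^{\delta'}$ is surjective for a suitable $\delta'$. Injectivity of \eqref{eq:q-iso} shows that $B_\infty$ is $p$-root closed to that depth, up to almost mathematics.

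From this one deduces, in the standard way (Scholze, Perfectoid spaces, Lemma 5.6 and its variants for rings of definition), that $R_\infty^\circ$ is bounded and Frobenius is surjective on $R_\infty^\circ/p$ almost. The key point is that any $x\in R_\infty$ with $x^q\in B_\infty$ has $x\in p^{-\epsilon}B_\infty$ for a small $\epsilon$. This follows by applying $\kappa$ and flatness, which bounds the power-bounded elements inside $p^{-1}B_\infty$. Hence $R_\infty$ is uniform and perfectoid, and $R_\infty^+$ being almost equal to $B_\infty$ does not affect this.

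\textbf{The inverse limit.} We write $U(r_m)=\Spa(B_m[1/p],B_m^{\mathrm{int}})$. The maps $\tau_m$ are the pullbacks along $\Phi\colon U(r_{m+1})\to U(r_m)$. On the affinoid charts these restrict the finite étale branch of Proposition~\ref{prop:finite-branch}, and Lemma~\ref{lem:global-height} guarantees they preserve charts. The colimit $\colim_m B_m[1/p]$ is dense in $R_\infty$, and the integral closure of $\colim B_m$ is dense in $R_\infty^+$. By Scholze's criterion for tilde-limits, $U_\infty\sim\ilim_m U(r_m)$ in the sense of Scholze--Weinstein, and $U_\infty$ is perfectoid. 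By the standard result (SW, Proposition 2.4.5, and Scholze's \emph{Étale cohomology of diamonds}, Proposition 11.23 flavor), $U_\infty^\diamond=\ilim_m U(r_m)^\diamond$.

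\textbf{Expected obstacle.} The main difficulty is the uniformity step: passing from Frobenius being bijective modulo $p^\delta$ on the specific model $B_\infty$ to boundedness of $R_\infty^\circ$. A priori $B_\infty$ need not be integrally closed, and $R_\infty^+$ could be much larger. I would handle this using the injectivity of $F_q$ modulo $p^{\delta/q}$: if $x\in R_\infty$ and $p^kx^{q}\in B_\infty$, descend inductively to show $p^{k/q}x\in B_\infty$ modulo small error. This forces $R_\infty^\circ\subset p^{-c}B_\infty$ for a fixed $c$. Once boundedness is known, the rest is formal.
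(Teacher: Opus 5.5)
Your proposal is correct and follows essentially the paper's route. The paper also turns \eqref{eq:q-iso} into a $p$-power Frobenius isomorphism $B_\infty/p^{\beta/p}\cong B_\infty/p^{\beta}$, taking $\beta=\delta/p^{d-1}$ so that injectivity holds exactly rather than ``up to almost mathematics'', and then gets uniformity and perfectoidness from Scholze's Lemma 5.6, where you additionally sketch a direct root-descent proof of $R_\infty^\circ\subset p^{-1}B_\infty$. For the limit, the paper checks the universal property directly on affinoid perfectoid test spaces: it extends compatible maps $B_m\to Q^+$ using completeness and integral closedness of $Q^+$, which is the same argument that underlies the tilde-limit result of Scholze--Weinstein you cite.
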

\begin{proof}
We first reduce \eqref{eq:q-iso} to the usual $p$-power criterion.
Put $\beta=\delta/p^{d-1}$. The map
\begin{equation}\label{eq:p-iso}
 F_p:B_\infty/p^{\beta/p}\longrightarrow B_\infty/p^\beta
\end{equation}
is surjective because surjectivity of $F_q$ modulo $p^\delta$
implies it after reduction modulo $p^\beta$: a $q$-th root yields a
$p$-th root by raising to $q/p$.
For injectivity, if $x^p\in p^\beta B_\infty$, then
\[
 x^q\in p^{\beta p^{d-1}}B_\infty=p^\delta B_\infty.
\]
Injectivity in \eqref{eq:q-iso} gives
$x\in p^{\delta/q}B_\infty=p^{\beta/p}B_\infty$.
Thus \eqref{eq:p-iso} is an isomorphism.

Apply the flat integral almost-algebra criterion
\cite[Definition 5.1(ii), Lemma 5.6]{Scholze2012}, with
$\varpi=p^\beta$. The almost algebra $B_\infty^a$ is flat and
$\varpi$-adically complete, and \eqref{eq:p-iso} supplies the
required Frobenius isomorphism. It follows that
$(B_\infty^a)_*[1/p]$ is perfectoid and has power-bounded subring
$(B_\infty^a)_*$. This generic algebra and its topology agree with
$B_\infty[1/p]$: almost elements lie in $p^{-1}B_\infty$, and
$B_\infty\subset(B_\infty^a)_*$. This proves both completeness and
uniformity with the required Frobenius condition. In particular,
$R_\infty^+$ is an open bounded integrally closed subring of the
power-bounded elements, so it defines an affinoid perfectoid pair.

For completeness, the inverse-limit assertion can be checked directly
on affinoid perfectoid test spaces $T=\Spa(Q,Q^+)$. A compatible
family $T\to U(r_m)$ gives compatible maps $B_m\to Q^+$. These
induce a map $D\to Q^+$ continuous for the $p$-adic topologies,
which extends to $B_\infty$ since $Q^+$ is complete. After inverting
$p$ it extends across $R_\infty^+$ because $Q^+$ is integrally
closed in $Q$. Conversely such a map restricts to every finite stage.
These constructions are inverse and commute with restriction on test
spaces.
\end{proof}

\section{Anti-canonical tower}
\label{sec:tower}
In this section we want to identify the open neighborhoods of the ordinary locus with the anticanonical tower.
\begin{theorem}[Geometric anticanonical tower]\label{thm:tower}
For one sufficiently small rational $\eps>0$, the following hold:

There are analytic maps
\[
 j_m:S(\eps/q^m)\longrightarrow\cH_m
\]
with
\[
 s_mj_m=\mathrm{id},\qquad t_mj_m=\Phi^m,\qquad
 \rho_mj_{m+1}=j_m\Phi,
\]
where $\rho_m:\cH_{m+1}\to\cH_m$ forgets level.
\end{theorem}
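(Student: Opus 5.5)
We construct the maps $j_m$ by induction on $m$, gluing the single Hecke section $c$ of the previous section with iterated Frobenius. For $m=1$ we take $j_1=c$ restricted to $S(\eps/q)$. By the construction of $c$ and Proposition~\ref{prop:finite-branch}, this gives $s_1j_1=\mathrm{id}$ and $t_1j_1=\Phi$. Here $\eps\le\eps_*$ is chosen once and for all, so that $\Phi$ maps $S(\eps/q^{m+1})$ into $S(\eps/q^m)$ for every $m$ (Lemma~\ref{lem:global-height}). We set $j_0=\mathrm{id}$, so that $\cH_0=\Sh_K$ and $s_0=t_0=\mathrm{id}$.

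For the inductive step we use the group-theoretic fact that, since $\nu$ is central in the relevant Levi and $K_0$ is hyperspecial,
\[
K_{m+1}=K_m\cap \nu(p)K_m\nu(p)^{-1}.
\]
Thus $\cH_{m+1}$ is identified with the fibre product
\[
\cH_m\times_{t_m,\Sh_K,s_1}\cH_1,
\]
or more precisely with an open-and-closed piece of it. The two maps to $\cH_m$ and $\cH_1$ are $\rho_m$ and the map induced by conjugation by $\nu(p)^{-m}$. We would verify this at the level of double cosets $G(\mathbb Q)\backslash X\times G(\mathbb A_f)/K^pK_m$, where the identification is the composition of Hecke correspondences $T_{\nu(p)}^{m}\circ T_{\nu(p)}=T_{\nu(p)}^{m+1}$ restricted to the ``single chain'' component. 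Neatness of $K^p$ ensures the fibre product is again a Shimura variety and not a stacky quotient. Given this, we define
\[
j_{m+1}(x)=\bigl(j_m(x),\,j_1(\Phi^m(x))\bigr),
\]
which is defined on $S(\eps/q^{m+1})$ because $\Phi^m(x)\in S(\eps/q)$. Compatibility in the fibre product requires $t_mj_m(x)=s_1j_1(\Phi^m x)$, and both sides equal $\Phi^m(x)$. The identities $s_{m+1}j_{m+1}=\mathrm{id}$ and $t_{m+1}j_{m+1}=t_1j_1\Phi^m=\Phi^{m+1}$ then follow.

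The relation $\rho_mj_{m+1}=j_m\Phi$ is more delicate: with this definition, $\rho_m j_{m+1}=j_m$, not $j_m\Phi$. So we should instead build the tower from the other end, matching the paper's convention that $\rho_m$ forgets the first step of the chain. We define
\[
j_{m+1}(x)=\bigl(j_1(x),\,j_m(\Phi(x))\bigr)\in\cH_1\times_{t_1,\Sh_K,s_m}\cH_m\cong\cH_{m+1},
\]
with $\rho_m$ being the projection to the second factor, followed if necessary by the Hecke translation by $\nu(p)$ relating $K_{m+1}$ to $\nu(p)K_{m+1}\nu(p)^{-1}$. Checking which projection corresponds to ``forgetting level'' versus $s$ and $t$ is bookkeeping, and we would fix it once on double cosets. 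With this definition all three identities follow formally from the $m=1$ case and induction.

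The main obstacle is the decomposition of $\cH_{m+1}$ as an iterated fibre product of copies of $\cH_1$, compatibly with the maps $s$, $t$ and $\rho$. Its analytic content is that the fibre product may have several components, and that $j_{m+1}$ lands in the component isomorphic to $\cH_{m+1}$. We would handle this by first proving it on the ordinary tube $S(0)$. There it follows from Lemma~\ref{lem:canonical_pseudo_frob}: iterating the canonical section gives the $P^+_\nu$-reduction, whose étale realization under $\mathrm{int}(\nu(p)^{-m})$ is exactly a $K_m$-structure. We would then extend to $S(\eps/q^m)$ using uniqueness of germs (Lemma~\ref{lem:section}) and the open-and-closedness argument of Proposition~\ref{prop:finite-branch}.
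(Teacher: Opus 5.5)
Your skeleton matches the paper's: chains of copies of $\cH_1$ with $\rho_m$ dropping the first arrow, $j_{m+1}(x)=(c(x),j_m(\Phi x))$, and straightness on the ordinary tube from the parabolic reduction. The gap is in the step you yourself flag as the main obstacle.

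\textbf{Where the argument fails.} Your resolution does not give what the theorem asserts, namely one $\eps$ for all $m$. For each fixed $m$, you check that the chain lies in the $\cH_{m+1}$-piece on $S(0)$ and then use open-and-closedness plus compactness. This only shows that the bad locus $D_{m+1}$ is open-closed and disjoint from $S(0)$, so it is avoided on $S(r)$ for any $r<\min_{D_{m+1}}w$. That radius depends on $m$. Nothing prevents it from falling below $\eps/q^{m+1}$, and infinitely many such shrinkings can force $\eps\to 0$.

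Uniqueness of germs does not help. Your $j_{m+1}$ is already defined explicitly into the fibre product; the only issue is which open-and-closed piece it lands in. Without uniformity you also lose $\mathrm{dom}(j_{m+1})=\Phi^{-1}\mathrm{dom}(j_m)$, which the degree count in Proposition~\ref{prop:stabilization} relies on.

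\textbf{The missing idea.} You need a criterion that detects membership in $\cH_{m+1}$ by a condition independent of $m$. The paper proves this in Lemma~\ref{lem:straight-chains}: the image of $\cH_m$ in the $m$-fold convolution $\mathcal C_m$ is exactly the set of chains all of whose consecutive two-arrow subchains lie in $\iota_2(\cH_2)$.
\begin{itemize}
\item The vertices of a chain are points of the Bruhat--Tits building.
\item A chain whose two-step pieces are geodesic segments is a local geodesic, hence a geodesic, since the building is CAT(0).
\item Cartan decomposition then gives endpoint relative position $Ka^{m+1}K$ (with $a$ as in that lemma), and the intermediate vertices are uniquely determined.
\end{itemize}
In your inductive form this says: $(c(x),j_m(\Phi x))$ lies in $\cH_{m+1}$ if and only if the junction $(c(x),c(\Phi x))$ lies in $\mathcal C_2^{\mathrm{str}}$. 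You then shrink once, so that $x\mapsto(c(x),c(\Phi x))$ lands in $\mathcal C_2^{\mathrm{str}}$ on all of $S(\eps)$. The height identity gives $\Phi^i(S(\eps/q^{m}))\subset S(\eps/q^{m-i})\subset S(\eps)$, so every junction of every chain is straight, for all $m$ at once.

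\textbf{A smaller point.} The identity $K_{m+1}=K_m\cap\nu(p)K_m\nu(p)^{-1}$ is true, but not because $\nu$ is central in a Levi. It is the segment-stabilizer statement: fixed-point sets of isometries of the building are convex. That same input is what makes $\cH_{m+1}\to\cH_1\times_{\Sh_K}\cH_m$ injective.
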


\begin{lemma}[Straight Hecke chains, with their level data]
\label{lem:straight-chains}
Let $\mathcal C_m$ be the convolution of $m$ copies of $\cH_1$,
where consecutive arrows satisfy $t_1(h_i)=s_1(h_{i+1})$.
There is an open-and-closed immersion
\[
 \iota_m:\cH_m\longrightarrow\mathcal C_m.
\]
For $m\geq2$, its image consists precisely of those chains for which
every consecutive two-arrow subchain belongs to
$\iota_2(\cH_2)\subset\mathcal C_2$.
Under this identification $\rho_m$ drops the first arrow.
\end{lemma}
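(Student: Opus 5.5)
The statement is purely group-theoretic at level $p$. I will prove it by realizing each Shimura variety $\cH_m$ as a quotient of the infinite-level (or sufficiently deep finite-level) space by the relevant compact open subgroup, and translating chains into cosets. Write $g=\nu(p)$.

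\textbf{Step 1: the chain space as a coset space.} Points of $\cH_1=\Sh_{K^pK_1}$ over a point of $\Sh_K$ correspond to cosets in $K_0/K_1$, where $K_1=K_0\cap gK_0g^{-1}$. Equivalently, they correspond to lattices (points of the building) at relative position $g$ from the base vertex. A chain in $\mathcal C_m$ with $t_1(h_i)=s_1(h_{i+1})$ is then a sequence of hyperspecial vertices
\[
x_0,\;x_1,\;\dots,\;x_m,
\]
in which each consecutive pair is in relative position $g$. Concretely, I write $\mathcal C_m$ as $\Sh_{K^p}$ times a finite étale fibre product, so that over a fixed point of $\Sh_K$ it is the discrete set of such galleries. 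I will check that this description is compatible with the $G(\mathbb Q_p)$-action, which lets the identification descend along the finite étale maps.

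\textbf{Step 2: the map $\iota_m$.} I define $\iota_m$ on $\cH_m$ by the chain of Hecke images under the intermediate levels $K_0\cap g^iK_0g^{-i}$, for $i=0,\dots,m$. The point is that
\[
K_m=\bigcap_{i\le m} g^iK_0g^{-i},
\]
because $g$ is dominant in the appropriate Weyl chamber, so $K_m$ is the stabilizer of the straight gallery $(gK_0,\dots,g^mK_0)$. I would first try to justify this intersection formula directly from the Iwahori-type decomposition attached to $P_\nu^+$. Given it, $\cH_m\to\mathcal C_m$ identifies $\cH_m$ with the $G$-orbit of the straight gallery. Since both sides are finite étale over $\Sh_K$ and the map is injective on geometric fibres, $\iota_m$ is an open-and-closed immersion. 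I will also check that dropping the first arrow corresponds to forgetting $x_0$, i.e.\ to passing from $K_m$ to $K_{m-1}$ after conjugation, which is $\rho_m$.

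\textbf{Step 3: characterizing straightness locally.} The main obstacle is to show that a gallery in which every two-step subgallery is straight (i.e.\ lies in $\iota_2(\cH_2)$) is globally in the orbit of $(x_0,gx_0,\dots,g^mx_0)$. I would use the building of $G$: a two-step gallery of type $(g,g)$ is straight exactly when $x_{i-1}$ and $x_{i+1}$ are at relative position $g^2$, i.e.\ their distance is additive. I would then argue by induction on $m$ that local additivity implies global additivity, $\mathrm{inv}(x_0,x_m)=g^m$. The tool is the cocharacter/retraction argument, in the form of convexity of $\nu$-geodesics in the CAT(0) building: a piecewise-geodesic path whose consecutive segments have no angle is a geodesic, and then all vertices lie in a common apartment. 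Once $\mathrm{inv}(x_0,x_m)=g^m$ and the intermediate vertices lie on the unique geodesic, the stabilizer computation of Step 2 identifies the chain with a point of $\iota_m(\cH_m)$. If the CAT(0) approach becomes technical, I would fall back on a Cartan-decomposition computation: $K_0g K_0gK_0\cap K_0g^2K_0$ is a single $K_0$-double coset, and I would iterate it.
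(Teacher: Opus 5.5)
Your proposal follows essentially the same route as the paper's proof. You identify the geometric fibres of the finite \'etale covers over the initial vertex with chains of $K$-cosets. You use the segment-stabilizer identity $K_0\cap g^mK_0g^{-m}=\bigcap_{i\le m}g^iK_0g^{-i}$ to make $\iota_m$ well defined and injective on fibres, hence an open-and-closed immersion. You characterize the image by the CAT(0) local-to-global geodesic property, Cartan decomposition, and uniqueness of the intermediate points on the geodesic. The paper differs only in two details. It proves the stabilizer identity by the same geodesic argument (an isometry fixing both endpoints fixes the segment) rather than by an Iwahori decomposition. It also works in the enlarged building, so that $G(\mathbb Q_p)/K_0$ genuinely embeds into the vertex set even when the centre is noncompact.

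There is one bookkeeping slip to fix. If $K_m$ stabilizes $(x_0,gx_0,\dots,g^mx_0)$, then the level-forgetting map $\rho_{m-1}\colon hK_m\mapsto hK_{m-1}$ forgets $g^mx_0$. Forgetting $x_0$ instead gives the Hecke-translated map $hK_m\mapsto hgK_{m-1}$, which is not $\rho_{m-1}$. So to obtain ``$\rho$ drops the first arrow'' you must list the chain from $g^mx_0$ back to the base vertex, as the paper's chain map does.
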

\begin{proof}
Let $\mathcal B$ be the enlarged Bruhat--Tits building of
$G_{\mathbb Q_p}$ and choose the hyperspecial point $o$ corresponding
to $K$, in an apartment containing $\nu$. Fix a Weyl-invariant
Euclidean metric, including the central vector factor. The building
is CAT(0). We use Cartan decomposition and the apartment properties
of the building; see~\cite{Yu2009}. The local-to-global geodesic
property is~\cite[II.1.4]{BH1999}.

The stabilizer of $o$ is exactly $K$. Indeed, in a Cartan
decomposition $g=k_1\lambda(p)k_2$, the cocharacter $\lambda$
translates $o$ by its vector in the enlarged apartment. If $g$
fixes $o$, that vector is zero, so $\lambda=0$ and $g\in K$.
Consequently $G(\mathbb Q_p)/K\to G(\mathbb Q_p)o$ is a bijection.
Using the enlarged building here retains central translations.
Put $\ell=d(o,ao)$; the points $a^io$ lie at equal intervals on a
line. An isometry fixing the endpoints of a segment fixes its unique
geodesic pointwise. Hence
\begin{equation}\label{eq:segment-stabilizer}
 K\cap a^{-m}Ka^m=\bigcap_{i=0}^m a^{-i}Ka^i.
\end{equation}
In particular $K_{m+1}\subset K_m$.

The map $\iota_m$ is, on the $p$-component of a level representative,
\begin{equation}\label{eq:chain-map}
 gK_m\longmapsto
 (ga^{-(m-1)}K_1,ga^{-(m-2)}K_1,\ldots,gK_1).
\end{equation}
The other adelic and symmetric-domain data are retained. Equation
\eqref{eq:segment-stabilizer} makes every component well-defined.
The vertices of this chain are
\[
 ga^{-m}K,ga^{-(m-1)}K,\ldots,gK.
\]
The source of the chain is $s_m$, and its target is $t_m$.
Both $\cH_m$ and $\mathcal C_m$ are finite etale over their initial
source. We may therefore check all the assertions on geometric
fibers. Trivializing the initial $K$-torsor identifies these fibers
with chains of $K$-cosets whose adjacent relative position is $KaK$.
This is the finite-cover description of Hecke correspondences
in~\cite[Remark 7.4.9]{MY2026}. It keeps the Hecke level structures;
we are not identifying a correspondence with its image in $S\times S$.

The map~\eqref{eq:chain-map} is injective on these fibers: equality
of the chain means that the change of representative belongs to
all the stabilizers in~\eqref{eq:segment-stabilizer}. It is therefore
an open-and-closed immersion of finite etale covers.
Write $\mathcal C_2^{\mathrm{str}}=\iota_2(\cH_2)$.
A two-arrow chain in this subcover is a geodesic segment of length
$2\ell$ in $\mathcal B$. If every consecutive two-arrow subchain
is in this subcover, the piecewise geodesic chain is a local geodesic,
and hence a geodesic. All its vertices lie in an apartment containing
its endpoints. Its vector displacement is $m$ times that of its
first segment, not just a vector of the same scalar length. Cartan
decomposition consequently gives endpoint relative position $Ka^mK$.
For endpoints of this relative position, the intermediate vertices
are the unique points at distances $i\ell$ on their geodesic.
They are exactly the cosets in~\eqref{eq:chain-map}, since
$G(\mathbb Q_p)/K$ embeds into $\mathcal B$.
This proves the claimed equality on geometric fibers and therefore
on finite etale covers. Conversely every chain in the image clearly
has the stated two-arrow property. If $\ell=0$, then $a\in K$ and
all the statements reduce to the identity correspondence.
Finally, dropping the first component in~\eqref{eq:chain-map}
is precisely the map induced by $K_{m+1}\subset K_m$.
\end{proof}

\begin{lemma}[Uniform continuation of the ordinary chains]
\label{lem:uniform-chains}
For one rational $\eps>0$, independent of $m$, the iterated section
on $S(\eps/q^m)$ factors uniquely through $\iota_m(\cH_m)$.
The resulting maps $j_m$ satisfy Theorem~\ref{thm:tower}.
\end{lemma}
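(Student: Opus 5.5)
The plan is to build the iterated section as a chain of $m$ one-step sections, using Proposition~\ref{prop:finite-branch} once and not once per $m$. That proposition gives a single $\eps>0$ such that $c:S(\eps/q^{k+1})\to\cH_1$ is defined for all $k\geq0$, with $s_1c=\mathrm{id}$ and $t_1c=\Phi$. It also gives $\Phi(S(\eps/q^{k+1}))\subset S(\eps/q^k)$, by the height identity \eqref{eq:global-height}. On $S(\eps/q^m)$ I define the iterated section
\[
 \gamma_m(x)=\bigl(c(x),c(\Phi x),\ldots,c(\Phi^{m-1}x)\bigr)\in\mathcal C_m.
\]
This is well defined because $\Phi^ix\in S(\eps/q^{m-i})\subset S(\eps/q)$ for $i\leq m-1$. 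Consecutive arrows match since $t_1c(\Phi^ix)=\Phi^{i+1}x=s_1c(\Phi^{i+1}x)$. The order of arrows is the one that makes the source $x$ and the target $\Phi^mx$. Since $\iota_m$ is an open-and-closed immersion by Lemma~\ref{lem:straight-chains}, $\gamma_m$ factors through $\iota_m(\cH_m)$ as soon as it meets the image on every connected component. The factorization is then unique because $\iota_m$ is a monomorphism.

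By Lemma~\ref{lem:straight-chains}, it suffices to check that every consecutive two-arrow subchain of $\gamma_m$ lies in $\mathcal C_2^{\mathrm{str}}=\iota_2(\cH_2)$. Every such subchain is $\gamma_2(\Phi^ix)$. So the whole problem reduces to the single case $m=2$: show that $\gamma_2$ maps $S(\eps/q^2)$ into $\mathcal C_2^{\mathrm{str}}$. This is where uniformity in $m$ comes from. The set $\gamma_2^{-1}(\mathcal C_2^{\mathrm{str}})$ is open and closed in $S(\eps/q^2)$. It therefore suffices to show it contains the ordinary tube $S(0)$ and that every connected component of $S(\eps/q^2)$ meets $S(0)$. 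For the second point, shrink $\eps$ once using the compactness argument of Proposition~\ref{prop:finite-branch}: the complement of the good locus is a closed set disjoint from $\{w=0\}$, hence has $w$ bounded below, and we choose $\eps$ below that bound. Shrinking $\eps$ preserves all earlier properties.

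The main obstacle is the ordinary case: on $S(0)$, $\gamma_2$ lands in the straight component. Here I would use the construction of $\widehat\Phi^{\mathrm{ord}}_K$ in Lemma~\ref{lem:canonical_pseudo_frob} and Remark~\ref{rem:analytic_frob}. Over the ordinary locus the étale local system has a canonical reduction to $P^+_\nu$. The section $c$ is the Hecke step that moves the $P^+_\nu(\mathbb Z_p)$-lattice by $\nu(p)^{-1}$. Two successive steps are then translations by $\nu(p)^{-1}$ along the same apartment in the same direction, because $\Phi$ preserves the $P^+_\nu$-reduction. In the building this is a geodesic of length $2\ell$, so geometric fibres lie in relative position $Ka^2K$ with the intermediate vertex at the midpoint. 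That is precisely $\mathcal C_2^{\mathrm{str}}$, checked pointwise as in the proof of Lemma~\ref{lem:straight-chains}.

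Finally define $j_m=\iota_m^{-1}\gamma_m$. The identities $s_mj_m=\mathrm{id}$ and $t_mj_m=\Phi^m$ follow from the source and target of the chain. The identity $\rho_mj_{m+1}=j_m\Phi$ follows because $\rho_m$ drops the first arrow, and dropping $c(x)$ from $\gamma_{m+1}(x)$ gives $\gamma_m(\Phi x)$.
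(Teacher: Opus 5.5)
Your proposal is correct and follows the paper's proof essentially step for step. The ingredients are the same: straightness of the two-step chain on the ordinary tube, obtained from the canonical $P^+_\nu$-reduction and its compatibility with $\Phi$; a single compactness shrink of $\eps$ that extends this two-step condition; and then the local-to-global characterization in Lemma~\ref{lem:straight-chains}, together with dropping the first arrow to get $\rho_mj_{m+1}=j_m\Phi$.

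The one blemish is the detour through the connected components of $S(\eps/q^2)$. It is not needed, and it is not what your compactness argument proves. That argument, applied to the complement of $\gamma_2^{-1}(\mathcal C_2^{\mathrm{str}})$ inside a fixed compact domain of $\gamma_2$, directly gives $\gamma_2(S(\eps/q^2))\subset\mathcal C_2^{\mathrm{str}}$. This is exactly what the paper does on $S(R)$.
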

\begin{proof}
First consider the ordinary tube. Let
$P=\mathcal P_\nu^+(\mathbb Z_p)$ be its canonical parabolic reduction.
The ordinary section is constructed from the $P$-fixed coset $aK$:
\[
 PaP=aP,\qquad a^{-1}Pa\subset P.
\]
Moreover, the parabolic reduction at $\Phi(x)$ is obtained from that
at $x$ by conjugation by $a^{-1}$, as in
\cite[Remark 7.4.8 and the proof of Lemma 7.4.10]{MY2026}.
In particular, transport of a chosen parabolic frame through the
first canonical modification is a parabolic frame for the second.
In a common rational frame the two modifications therefore have
vertices $K,aK,a^2K$. Changing the initial frame by $p\in P$
changes the transported frame by $a^{-1}pa\in P$, so this description
is independent of the frame. Thus
\[
 x\longmapsto(c(x),c(\Phi(x)))
\]
lands in $\mathcal C_2^{\mathrm{str}}$ on $S(0)$.

Choose a fixed rational $R>0$ with $qR\leq b$, so this two-step map
is defined on the compact space $S(R)^{\mathrm{Berk}}$.
The inverse image $D$ of the complement of
$\mathcal C_2^{\mathrm{str}}$ is open and closed in $S(R)$.
It is compact and disjoint from $S(0)$. If $D$ is nonempty,
$\min_D w>0$. Choose rational
\[
 0<\eps\leq\min\{R,\eps_*\},\qquad \eps<\min_Dw
\]
when necessary. We can also impose all the finitely many radii
required for the chartwise local algebra theorem. The two-step map
then lands in $\mathcal C_2^{\mathrm{str}}$ on all of $S(\eps)$.
This is the only shrinking used for straightness.

For $x\in S(\eps/q^m)$, the height identity gives
$\Phi^i(x)\in S(\eps/q^{m-i})$. Every consecutive two-arrow
subchain of
\[
 C_m(x)=(c(x),c(\Phi(x)),\ldots,c(\Phi^{m-1}(x)))
\]
therefore satisfies the fixed two-step condition. Lemma
\ref{lem:straight-chains} gives the analytic factorization
$j_m=\iota_m^{-1}C_m$. Its endpoints are $x$ and $\Phi^m(x)$.
Dropping the first arrow gives $C_m(\Phi(x))$, so that
$\rho_mj_{m+1}=j_m\Phi$. Set $j_0=\mathrm{id}$.
\end{proof}

\begin{proposition}[Anticanonical components and stabilization]
\label{prop:stabilization}
With the radius and maps of Theorem~\ref{thm:tower}, put
\[
 Y_m=j_m(S(\eps/q^m))\subset\cH_m.
\]
Then $j_m$ is an open immersion, and $Y_m$ is open and closed in
$t_m^{-1}(S(\eps))$. For $m\geq1$,
\begin{equation}\label{eq:cartesian}
 Y_{m+1}=\rho_m^{-1}(Y_m).
\end{equation}
In particular, at full level the condition of belonging to the
anticanonical tower is an open condition imposed already at level $K_1$.
\end{proposition}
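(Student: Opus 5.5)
We establish the three assertions in order: the open-immersion property, the open-and-closed property of $Y_m$, and the cartesian identity \eqref{eq:cartesian}.

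\emph{Open immersion.} From $s_mj_m=\mathrm{id}$, the map $j_m$ is a section of $s_m$ over $S(\eps/q^m)$. Since $s_m:\cH_m\to S$ is finite etale, a section over an open subspace is an open-and-closed immersion into $s_m^{-1}(S(\eps/q^m))$, exactly as in the proof of Proposition~\ref{prop:finite-branch}. In particular $j_m$ is an open immersion into $\cH_m$.

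\emph{$Y_m$ is open and closed in $t_m^{-1}(S(\eps))$.} We repeat the argument of Proposition~\ref{prop:finite-branch} with $t_m$ in place of $t$. By $t_mj_m=\Phi^m$ and the height identity \eqref{eq:global-height}, iterated, the inclusion $Y_m\subset t_m^{-1}(S(\eps))$ holds. We then show
\[
 Y_m=j_m(V_m)\cap t_m^{-1}(S(\eps)),\qquad V_m=\{w<a/q^{m-1}\},
\]
where $j_m$ is extended to the larger open domain $V_m$ by the same chain construction of Lemma~\ref{lem:uniform-chains}. For a point $j_m(x)$ with $x\in V_m$, the height identity gives $w(\Phi^m(x))=q^mw(x)$, so $t_m(j_m(x))\in S(\eps)$ if and only if $x\in S(\eps/q^m)$. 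It follows that $Y_m$ is open in $t_m^{-1}(S(\eps))$. On the other hand $Y_m$ is compact, being the image of the compact set $S(\eps/q^m)$, and the ambient Berkovich space is Hausdorff, so $Y_m$ is also closed. The choice of $V_m$ needs care: the domain on which $j_m$ is still defined must contain a neighborhood of $S(\eps/q^m)$. To arrange this, we run the straightness argument of Lemma~\ref{lem:uniform-chains} on the slightly larger radius $R$ rather than on $\eps$, which that lemma already permits.

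\emph{The cartesian identity.} The inclusion $Y_{m+1}\subset\rho_m^{-1}(Y_m)$ follows from $\rho_mj_{m+1}=j_m\Phi$ together with $\Phi(S(\eps/q^{m+1}))\subset S(\eps/q^m)$. For the reverse inclusion, let $y\in\cH_{m+1}$ with $\rho_m(y)=j_m(x)$. By Lemma~\ref{lem:straight-chains}, $y$ is a straight chain $(h_0,h_1,\dots,h_m)$ whose last $m$ arrows form the chain $C_m(x)=(c(x),\dots,c(\Phi^{m-1}(x)))$. Its source $x'=s_1(h_0)$ satisfies $t_1(h_0)=x$, and since $m\ge1$ the pair $(h_0,c(x))$ lies in $\mathcal C_2^{\mathrm{str}}$. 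The key claim is then that $h_0=c(x')$, and hence $y=j_{m+1}(x')$ with $x'\in S(\eps/q^{m+1})$.

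This claim is the main obstacle: among the $\cH_1$-arrows ending at $x$ that extend straightly through $c(x)$, we must show that the only one is the canonical arrow $c(x')$. We would prove it in two steps.

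First, we check it on geometric fibers over the ordinary tube using the building. The canonical arrow out of $x$ is determined by the parabolic $P$. Going backwards, the straight continuation of a geodesic segment $[ao,a^2o]$ beyond $ao$ to a point at distance $\ell$ is not unique in a building, since the building branches. What saves the argument is that the chain must also satisfy a condition on its source: its source lies in $S$ with $t_1=x$, and the whole chain must be a point of $\cH_{m+1}$ over $t_{m+1}^{-1}(S(\eps))$.

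Second, we shift to the perspective of $t$, i.e. of the target rather than the source. Over $S(\eps)$, Proposition~\ref{prop:finite-branch} identifies $c(S(r/q))$ as an open-and-closed factor of $t^{-1}(S(r))$. We then count degrees. By \eqref{eq:segment-stabilizer}, both $\rho_m^{-1}(Y_m)\to Y_m$ and $Y_{m+1}\to Y_m$ are finite etale. The degree of $\rho_m$ is $[K_m:K_{m+1}]$, while $Y_{m+1}\to Y_m$ is identified via $j$ with $\Phi$, which has degree $q^n$. If $[K_m:K_{m+1}]=q^n$ for $m\ge1$, then since $Y_{m+1}$ is open and closed inside $\rho_m^{-1}(Y_m)$ and both have the same degree over $Y_m$, fiber by fiber, the two coincide.

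The index computation reduces to the Iwahori-type count $|aPa^{-1}\backslash P|$ modulo the deeper level. We expect it to stabilize at $q^n$ precisely for $m\ge1$, because for $m\ge1$ the level $K_m$ already contains the full unipotent radical condition. This is the step to verify carefully, and the phrase ``imposed already at level $K_1$'' suggests that this is the intended mechanism.
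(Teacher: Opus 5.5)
Your proof of \eqref{eq:cartesian} is, in the end, the paper's argument. First, $Y_{m+1}\subset\rho_m^{-1}(Y_m)$ follows from $\rho_mj_{m+1}=j_m\Phi$. Second, the inclusion is open and closed: both sides lie in $t_{m+1}^{-1}(S(\eps))$, since $t_m\rho_m=t_{m+1}$, and $Y_{m+1}$ is open and closed there. Third, both sides are finite \'etale over $Y_m$, of degrees $q^n$ (via $\Phi$ and Proposition~\ref{prop:finite-branch}) and $[K_m:K_{m+1}]$. Finite \'etaleness of $\rho_m$ comes from its being a change of level, not from \eqref{eq:segment-stabilizer}. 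The building-theoretic attempt to show $h_0=c(x')$ directly can be dropped, since the count implies it.

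The one input you leave as an expectation, $[K_m:K_{m+1}]=q^n$ for $m\geq1$, is also only asserted in the paper, and you can close it as follows. For $m\geq1$, $K_m\subset K_1$ reduces mod $p$ into the parabolic of non-positive $\nu$-weights, so it has an Iwahori decomposition
$K_m=U^-(\mathbb Z_p)\,M_\nu(\mathbb Z_p)\,\nu(p)^mU^+(\mathbb Z_p)\nu(p)^{-m}$, where $U^{\pm}$ are the unipotent radicals of positive and negative $\nu$-weight. Hence
\[
[K_m:K_{m+1}]=[U^+(\mathbb Z_p):\nu(p)U^+(\mathbb Z_p)\nu(p)^{-1}]=p^{\sum_{\alpha>0}\langle\alpha,\nu\rangle},
\]
which is the count your heuristic was after. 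Since $\mu$ is minuscule and all $\sigma^i\mu$ are dominant for the same $\sigma$-stable Borel, there is no cancellation in $\nu=\sum_i\sigma^i\mu$. The exponent is therefore $\sum_{i=0}^{d-1}\#\{\alpha>0:\langle\alpha,\sigma^i\mu\rangle=1\}=dn$. For $m=0$ there is no such decomposition and the index is larger ($p+1$ rather than $p$ for the modular curve), which is why the identity starts at $m\geq1$.

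For the open-and-closed assertion you take a different route, copying the proof of Proposition~\ref{prop:finite-branch}: enlarge the domain of $j_m$ and use compactness in a Hausdorff space. This works, but your $V_m=\{w<a/q^{m-1}\}$ is too large. Lemma~\ref{lem:uniform-chains} does not give straightness of the two-step map on all of $S(R)$; it shrinks $\eps$ below $\min_D w$ precisely because the non-straight locus $D\subset S(R)$ may be nonempty. Take instead $V_m=\{w<\eps'/q^m\}$ with $\eps<\eps'<\min_D w$ and $\eps'\leq R$ (or any $\eps<\eps'\leq R$ if $D=\varnothing$). Then all iterates stay where $c$, the height identity and straightness are available.

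The paper needs no enlargement. By Proposition~\ref{prop:finite-branch}, $\Phi^m:S(\eps/q^m)\to S(\eps)$ is finite \'etale, and $t_mj_m=\Phi^m$. So $j_m$ is a morphism over $S(\eps)$ between two finite \'etale covers, hence itself finite \'etale, and its image is open and closed. This is shorter and involves no further choice of radius.
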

\begin{proof}
The identity $s_mj_m=\mathrm{id}$ makes $j_m$ a section of a
finite etale map over its source domain, hence an open immersion.
Viewed over $S(\eps)$, it is a map from a finite space to the
finite space $t_m^{-1}(S(\eps))$, and is therefore finite.
Its image is consequently open and closed in that space.

Compatibility gives $Y_{m+1}\subset\rho_m^{-1}(Y_m)$ as an
open and closed subspace. Both sides are finite etale over $Y_m$.
The left side has degree $q^n$ by the identification with $\Phi$;
the right side has degree $[K_m:K_{m+1}]=q^n$. Thus the inclusion
is an isomorphism. Induction gives stabilization at level $K_1$.
Without this argument, an intersection of infinitely many open
conditions need not be open.
\end{proof}
\begin{remark}[Comparison with Siegel anticanonical tower]
In the Siegel case, with \(q=p\) and the usual normalization of the
Hasse invariant, our \(Y_m\) agrees with  anticanonical locus
\(X_{\Gamma_0(p^m)}(\varepsilon)_{\mathrm a}\)
\cite[Theorem III.2.15 and Remark III.2.16]{Scholze2015}.
Indeed, let \(A\) correspond to a point of \(S(\varepsilon/p^m)\), and
let \(C_m\subset A[p^m]\) be its canonical subgroup. The point \(j_m(A)\)
of the level-\(m\) correspondence is, in Scholze's notation,
\[
       \bigl(A/C_m,\; A[p^m]/C_m\bigr).
\]
With our convention for the two projections, \(s_mj_m(A)=A\), whereas
\(t_mj_m(A)=A/C_m=\Phi^m(A)\). Thus the level structure on the
\emph{target} is the subgroup \(D_m=A[p^m]/C_m\), which is
anticanonical: its \(p\)-torsion is disjoint from the weak canonical
subgroup of \(A/C_m\). The compatibility
\(\rho_mj_{m+1}=j_m\Phi\) is the corresponding compatibility of these
subgroups under successive quotients. In particular,
\(Y_{m+1}=\rho_m^{-1}(Y_m)\) is the analogue of Scholze's Cartesian
description of the anticanonical tower.
\end{remark}
\begin{theorem}[Perfectoid anticanonical limit]\label{thm:anti}
The diamond
\[
 Y_\infty=\ilim_m(Y_m)_C^\diamond
\]
is represented by a perfectoid space.
\end{theorem}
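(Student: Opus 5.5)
The plan is to identify the tower $(Y_m)$ with the tower $(S(\eps/q^m))_m$ under the transition maps $\Phi$, and then invoke Theorem~\ref{thm:local} chartwise.

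\textbf{Step 1: identify the two towers.} By Proposition~\ref{prop:stabilization}, each $j_m\colon S(\eps/q^m)\to Y_m$ is an isomorphism onto its image. The relation $\rho_mj_{m+1}=j_m\Phi$ of Theorem~\ref{thm:tower} shows that, under these isomorphisms, the transition map $\rho_m\colon Y_{m+1}\to Y_m$ becomes
\[
\Phi\colon S(\eps/q^{m+1})\to S(\eps/q^m).
\]
Hence
\[
Y_\infty\simeq \ilim_{m,\Phi} S(\eps/q^m)_C^\diamond,
\]
and it suffices to show this inverse limit of diamonds is perfectoid.

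\textbf{Step 2: cover by compatible affinoids.} Use the finite affine cover from Lemma~\ref{lem:global-height}, whose charts $U_i$ satisfy $\Phi(U_i\cap S(b))\subset U_i$. Since $\eps/q^m\le \eps\le b$, each stage $S(\eps/q^m)$ is covered by the affinoids
\[
U_i(\eps/q^m)=U_i\cap S(\eps/q^m),
\]
and $\Phi$ maps $U_i(\eps/q^{m+1})$ into $U_i(\eps/q^m)$. Applying the height identity~\eqref{eq:global-height} chartwise shows the stronger statement
\[
\Phi^{-1}\bigl(U_i(\eps/q^m)\bigr)\cap S(\eps/q^{m+1})\supseteq U_i(\eps/q^{m+1}).
\]
For each $i$ the subsystem $(U_i(\eps/q^m))_m$ is exactly the system $U(r_m)$ with the maps $\tau_m$ of Lemma~\ref{lem:transition}, for the chart algebra $A=\widehat R_i$. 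The radius $\eps$ was already chosen small enough for all finitely many charts, so Theorem~\ref{thm:local} applies: $\ilim_m U_i(r_m)^\diamond$ is an affinoid perfectoid space $U_{i,\infty}$.

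\textbf{Step 3: glue.} The diamond $Y_\infty$ is covered by the open subdiamonds $\ilim_m U_i(r_m)^\diamond$, since a point of the limit lies in some $U_i$ at level $0$ and stays in it at every level. The one caveat is that $U_i(r_m)$ is rational but need not be open in $S(r_m)$. Openness is nevertheless automatic at the limit: $U_i(r_0)$ pulled back to the limit is a rational subset of $Y_\infty$, and by Step 2 the inclusions of the finite levels into this pullback become equalities in the limit. The pairwise intersections are
\[
\ilim_m \bigl(U_i\cap U_j\bigr)(r_m)^\diamond.
\]
These are rational subsets of $U_{i,\infty}$, given by the pullback of the rational conditions at level $0$, so they are affinoid perfectoid. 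A diamond admitting an open cover by perfectoid spaces is itself a perfectoid space, so gluing finishes the proof.

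\textbf{Main obstacle.} The hardest step is Step 2: checking that the chartwise transitions really are the maps $\tau_m$ into the model $B_{m+1}$ for $\widehat R_i$. This means showing $\Phi^*$ of both the chart coordinates and $u_m$ are integral on $U_i(r_{m+1})$. The coordinates are handled by Lemma~\ref{lem:global-height} together with Proposition~\ref{prop:coordinates}, and $u_m$ by~\eqref{eq:u-transition}. I also need all the constants $N$, $r_1$, $\eps$ to be chosen uniformly across the finitely many charts, which I would arrange by taking minima and maxima over $i$.
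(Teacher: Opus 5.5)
Your route is the paper's: transport the tower along the open immersions $j_m$ so that $\rho_m$ becomes $\Phi$, apply Theorem~\ref{thm:local} on each chart of Lemma~\ref{lem:global-height}, and glue. Step~3 as written does not close, however.

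\textbf{The gap.} Your ``stronger statement'' in Step~2, $\Phi^{-1}(U_i(\eps/q^m))\cap S(\eps/q^{m+1})\supseteq U_i(\eps/q^{m+1})$, is just the forward containment $\Phi(U_i(r_{m+1}))\subset U_i(r_m)$ rewritten. The height identity cannot give more, since it controls $w$ and not which chart a point lies in. What Step~3 actually needs is the reverse inclusion
\[
 \Phi^{-1}(U_i)\cap S(r_{m+1})\subseteq U_i .
\]
Without it, $\ilim_m U_i(r_m)^\diamond=\bigcap_m p_m^{-1}(U_i(r_m))$ is an intersection of infinitely many opens of $Y_\infty$. This is exactly the phenomenon flagged after Proposition~\ref{prop:stabilization}. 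Your phrase ``the inclusions become equalities in the limit'' has no content here: equality in the limit is literally the statement that $x_0\in U_i$ forces $x_m\in U_i$ for all $m$. The same inclusion is silently used in your claim that a point ``stays in $U_i$ at every level'' and in your description of the overlaps.

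Two side remarks. First, finite-level openness is not the issue: $U_i$ is the generic fiber of an open formal subscheme, hence a quasicompact open of the adic space; it is closed only in the Berkovich picture of Lemma~\ref{lem:global-height}. Second, the covering alone can be rescued from forward containment by pigeonhole: some $i$ has $x_m\in U_i$ for infinitely many $m$, hence for all $m$. Openness cannot be rescued this way.

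\textbf{The fix.} What is missing is that $\Phi$ commutes with specialization near the ordinary locus. Let $x\in U_{i'}(r)$ with $r\le b$. The estimate $\|\Phi^*f-f^q\|\le|p|^\gamma$ of Lemma~\ref{lem:global-height} holds for the generators $f_{ij}$. It extends to every $f\in R_{i'}$, because $\Phi^*$ is a ring map fixing scalars, $c^q\equiv c\pmod p$ for $c\in\OEv$, and the $q$-power is additive modulo $p$. Hence $|f(\Phi(x))|<1$ if and only if $|f(x)|<1$, so $\operatorname{sp}(\Phi(x))=\operatorname{sp}(x)$ in $\cS_k$. Properness gives $U_i=\operatorname{sp}^{-1}(\mathscr U_{i,k})$, so $\Phi(x)\in U_i\Leftrightarrow x\in U_i$. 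Iterating yields $U_i(r_m)=(\Phi^m)^{-1}(U_i(r_0))\cap S(r_m)$.

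Consequently each $U_{i,\infty}$ is the inverse image of the open $U_i(r_0)$, hence an open subdiamond of $Y_\infty$. The overlaps are correctly identified with $\ilim_m(U_i\cap U_j)(r_m)^\diamond$, and your gluing then goes through. This is precisely the content the paper compresses into ``the charts are inverse images of opens at finite level.''
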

\begin{proof}
These are affinoid perfectoid limits of
the required towers, by Theorem~\ref{thm:tower}(2),(3).
Their overlaps are open subdiamonds because the charts are inverse
images of opens at finite level. The universal property of the
inverse-limit diamonds identifies the overlaps and their cocycles.
They therefore glue to a perfectoid space representing $Y_\infty$.
\end{proof}

\section{Full level and the perfectoid Hasse neighborhood}
\label{sec:global}

Define the full infinite-level object from the outset as the diamond
\[
 S_{\infty,C}=\ilim_{K'\subset K}(\Sh_{K^pK',C}^{\an})^\diamond.
\]
We do not assume that this inverse limit is an adic space. Let
$W\subset S_{\infty,C}$ be the inverse image of $(Y_1)_C$.
Proposition~\ref{prop:stabilization} makes $W$ an open subdiamond
and identifies it with the full-level tower over $Y_\infty$.

\begin{proposition}[Passage to full level]\label{prop:full}
The space $W$
is represented by a perfectoid space.
\end{proposition}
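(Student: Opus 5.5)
The plan is to follow Scholze's passage from $\Gamma_0(p^\infty)$-level to full level, using the almost purity theorem over the perfectoid space $Y_\infty$ of Theorem~\ref{thm:anti}.

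First, for each open normal subgroup $K'\subset K_1$, consider the finite étale cover $\Sh_{K^pK'}\to\Sh_{K^pK_1}$ restricted to $Y_1$. Since the levels $K_m$ need not be cofinal among the open subgroups of $K$, I would use the fact that $\bigcap_m K_m=K\cap\bigcap_m a^mKa^{-m}$ (with $a=\nu(p)$) is the parahoric-type subgroup $\mathcal P^+_\nu(\mathbb Z_p)$-like group $K_\infty$. By Proposition~\ref{prop:stabilization}, $W$ is the inverse limit over $K'$ of the pullbacks $W_{K'}=(Y_1)\times_{\cH_1}\Sh_{K^pK'}$. I would then write $W=\ilim_{K'} \bigl(Y_\infty\times_{Y_{m(K')}}W_{K'\cap K_{m}}\bigr)$: for each $K'$, choose $m$ large and form the fiber product of the diamond $Y_\infty$ with the finite étale map $\Sh_{K^p(K'\cap K_m)}\to \cH_m$ over $Y_m$.

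Second, each such fiber product is finite étale over the perfectoid space $Y_\infty$, so it is represented by a perfectoid space; on affinoid perfectoid pieces $\Spa(R_\infty,R_\infty^+)$ from Theorem~\ref{thm:local} it is $\Spa(S,S^+)$ with $S$ finite étale over $R_\infty$, and $S^+$ the integral closure, which is almost finite étale by almost purity \cite{Scholze2012}. The transition maps are again finite étale, so over each affinoid perfectoid chart $V=\Spa(R_\infty,R_\infty^+)\subset Y_\infty$ the tower is $\Spa(S_j,S_j^+)$ with $S_j$ perfectoid. Then the completed colimit $S_\infty^+=(\colim_j S_j^+)^{\wedge}$ is again an integral perfectoid algebra: Frobenius $S_j^+/p^{1/p}\to S_j^+/p$ is an almost isomorphism for each $j$, and filtered colimits and completion preserve this. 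Hence $\Spa(S_\infty,S_\infty^+)$ is affinoid perfectoid, and it represents the inverse-limit diamond over $V$ by the same test-space argument as at the end of the proof of Theorem~\ref{thm:local}, using that the limit of diamonds is computed on affinoid perfectoid test objects. Gluing over the affinoid cover of $Y_\infty$ (overlaps being inverse images of finite-level opens) gives the perfectoid space representing $W$.

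The main obstacle I expect is the first step: justifying that the diamond $W$, defined as the limit over all $K'\subset K$, agrees with the limit of the pullbacks to $Y_\infty$, i.e.\ that the full-level tower restricted to $W$ factors compatibly through the anticanonical tower. I would try to establish this through cofinality of the pairs $(K',m)$: the system $\{K'\cap K_m\}$ is cofinal in $\{K'\}$, and the maps $\Sh_{K^p(K'\cap K_m)}\to\cH_m$ restricted over $Y_m$ are compatible with $\rho_m$ by the Cartesian identity $Y_{m+1}=\rho_m^{-1}(Y_m)$. That identity ensures that no additional open conditions accumulate in the limit, so the limit is an open subdiamond as claimed.
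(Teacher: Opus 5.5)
Your proposal is correct and is essentially the paper's argument: you pull the finite-level covers back to the perfectoid space $Y_\infty$, invoke almost purity, pass to completed colimits over affinoid perfectoid charts, and glue. The only difference is in how the limit is identified with $W$. The paper uses the stabilization $K_m=K_{m+1}(K_m\cap N_r)$ for $m\gg0$ (coming from the images of $K_m$ in the finite group $K/N_r$) to make each fixed-$r$ tower a single finite \'etale pullback. Your diagonal reindexing avoids that lemma, but it needs $K'\mapsto m(K')$ to be monotone and unbounded, which you should state explicitly: for bounded $m$ the limit is $Y_\infty\times_{Y_m}W$ rather than $W$.
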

\begin{proof}
For $r\geq1$ put $N_r=K(p^r)$ and
$L_{m,r}=K_m\cap N_r$. Let $Z_{m,r}$ be the inverse image of
$Y_m$ at level $L_{m,r}$. For fixed $r$, the decreasing images of
$K_m$ in the finite group $K/N_r$ stabilize. Hence for all sufficiently
large $m$,
\[
 K_m=K_{m+1}(K_m\cap N_r).
\]
The corresponding squares of level covers are cartesian. Equivalently,
the natural map to the fiber product is a finite etale map inducing a
bijection on level-structure fibers; the displayed group equality
is precisely the required transitivity. Thus
\[
 Z_r=\ilim_m(Z_{m,r})_C^\diamond
\]
is the pullback of one finite etale cover to $Y_\infty$.
Almost purity makes $Z_r$ perfectoid.

The maps $Z_{r+1}\to Z_r$ are finite etale and surjective. Over an
affinoid perfectoid open in $Y_\infty$, all these covers are affinoid
perfectoid. The pullback maps on their algebras are isometric for the
spectral norms, by surjectivity. Complete their union for this norm.
The resulting norm is power-multiplicative, so its unit ball is the
power-bounded subring and the completed algebra is uniform. Every
element of the unit ball is congruent modulo $p$ to an element of a
finite-stage unit ball. Frobenius surjectivity at that stage then
proves Frobenius surjectivity modulo $p$ in the completion. This gives
an affinoid perfectoid limit. Choose the plus subring by taking the
integral closure of the closure of the union of the finite-stage plus
subrings. The inverse-limit identification is the usual one
of~\cite[Propositions 2.4.2 and 2.4.5]{ScholzeWeinstein2013}.
Finally, the subgroups $L_{m,r}$ are cofinal among the compact open
subgroups defining full level as $r$ increases. Equation
\eqref{eq:cartesian} identifies the resulting open subdiamond with
$W$. Hence $W=\ilim_r Z_r$ is perfectoid.
\end{proof}

\begin{proposition}[Saturation of the anticanonical open]
\label{prop:saturation}
Let $\operatorname{pr}_K:S_{\infty,C}\to S_C^\diamond$ forget the
$p$-level structure, and put
\[
 \mathcal N_\eps=\operatorname{pr}_K^{-1}(S(\eps)_C^\diamond).
\]
Then $\mathcal N_\eps$ is a quasicompact open perfectoid subspace of
$S_{\infty,C}$. More precisely,
\begin{equation}\label{eq:K-saturation}
 \mathcal N_\eps=\bigcup_{k\in K}kW,
\end{equation}
and finitely many translates suffice.
\end{proposition}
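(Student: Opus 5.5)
The plan is to prove the set-theoretic equality \eqref{eq:K-saturation} first, and then to deduce perfectoidness and quasicompactness from it together with Proposition~\ref{prop:full}.

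\emph{Translates lie in $\mathcal N_\eps$.} The group $K=K_0$ acts on $S_{\infty,C}$ by right translation of the $p$-level structure. This action commutes with $\operatorname{pr}_K$, since $\operatorname{pr}_K$ forgets exactly the $K$-level. The subdiamond $W$ is the inverse image of $(Y_1)_C\subset t_1^{-1}(S(\eps))$. Hence $\operatorname{pr}_K(W)\subset S(\eps)_C^\diamond$, and therefore $kW\subset\mathcal N_\eps$ for every $k\in K$.

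\emph{$\mathcal N_\eps$ is covered by translates.} Take a point $y\in\mathcal N_\eps$ lying over $x=\operatorname{pr}_K(y)\in S(\eps)$. Its image at level $K_1$ is a point of $t_1^{-1}(x)$, and the fiber of $t_1^{-1}(S(\eps))\to S(\eps)$ is a $K$-torsor quotient, namely $K/K_1$ after trivialization. Acting by $K$ permutes the points of this fiber transitively, because the fibers of $\cH_1\to S$ over geometric points are $K$-orbits $K/K_1$. By Proposition~\ref{prop:stabilization}, $Y_1\cap t_1^{-1}(x)$ is nonempty, since it contains $j_1(\Phi^{-1}\text{-preimage})$; this uses the surjectivity of $\Phi:S(\eps/q)\to S(\eps)$ from Proposition~\ref{prop:finite-branch}. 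So some $k\in K$ moves the level-$K_1$ image of $y$ into $Y_1$, i.e.\ $y\in k^{-1}W$. This gives \eqref{eq:K-saturation}. The main subtlety is to make the identification of $t_1$ with the level-forgetting map consistent with the convention that $j_1$ has $t_1j_1=\Phi$. If the forgetting map is instead $s_1$, I would use $Y_1\supset j_1(S(\eps/q))$ and the surjectivity of $\Phi$ differently. Either way, transitivity on fibers is the key input.

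\emph{Finitely many translates.} The set $W$ is stable under $K_1$, since it is an inverse image from level $K_1$. Hence $kW$ depends only on the coset $kK_1$, and $K/K_1$ is finite. So the union in \eqref{eq:K-saturation} is a finite union of translates.

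\emph{Conclusion.} Each translate $kW$ is an open subdiamond isomorphic to $W$, and $W$ is perfectoid by Proposition~\ref{prop:full}. A finite union of open perfectoid subdiamonds is represented by a perfectoid space, since perfectoid spaces glue along open subdiamonds. Openness of $\mathcal N_\eps$ then follows, although it is only an inverse image of the closed-looking $S(\eps)$. It is open because it equals a union of opens, and equivalently because $S(\eps)$ agrees with an open neighborhood at the level of the tower. For quasicompactness, $W$ is quasicompact: it is an inverse limit, along finite surjective maps, of quasicompact spaces lying over the compact affinoid-covered $Y_1$. A finite union of quasicompact sets is quasicompact. I expect the main obstacle to be making the transitivity and nonemptiness argument on fibers rigorous at the level of diamonds, rather than only on geometric points. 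For this I would argue on geometric points and use that all the objects involved are open subdiamonds, which are determined by their points.
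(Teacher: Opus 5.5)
Your proposal is correct and follows the paper's argument essentially step for step. The steps are: surjectivity of $t_1\colon Y_1\to S(\eps)$ (from $t_1j_1=\Phi$ and Proposition~\ref{prop:finite-branch}), transitivity of $K$ on geometric fibers of the $K$-torsor, $K_1$-stability of $W$ for finiteness, gluing via Proposition~\ref{prop:full}, and quasicompactness from the finite tower over a quasicompact base. Your hedge about $s_1$ is unnecessary, since in the paper's conventions $t_1$ is the level-forgetting map (with $s_1$ the image would only be $S(\eps/q)$), and openness of $\mathcal N_\eps$ is immediate anyway because $S(\eps)$ is a rational, hence open, subset of the adic space.
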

\begin{proof}
The map $Y_1\to S(\eps)$ is finite etale and surjective, by
Theorem~\ref{thm:tower}(1),(3). On every geometric fiber of the
$K$-torsor $S_{\infty,C}\to S_C^\diamond$, some choice of level
structure therefore lies in $W$. Transitivity of $K$ on that fiber
proves~\eqref{eq:K-saturation}. Since $W$ is the inverse image of
$Y_1$ at level $K_1$, it is $K_1$-stable. There are at most
$[K:K_1]$ distinct translates in this union. They are perfectoid by
Proposition~\ref{prop:full}; their open overlaps glue as open
subdiamonds. Finally $S(\eps)$ is quasicompact, and the projection
from the inverse limit of finite covers is quasicompact. Hence
$\mathcal N_\eps$ is quasicompact.
\end{proof}

\section{The period neighborhood and rational Hecke covering}
\label{sec:period}

We work with the diamond $X=S_{\infty,C}$ throughout the argument,
without assuming that it is represented by an adic space. Since the
finite-level Shimura varieties are proper, their analytifications are
quasicompact and quasiseparated. The finite transition maps and
\cite[Lemma 11.22]{ScholzeDiamonds} show that $X$ is a spatial diamond
and that its topological space is the inverse limit of the
finite-level topological spaces. In particular $|X|$ is spectral
and compact for the constructible topology.

\begin{proposition}[The period map before perfectoidness]
\label{prop:period-map}
There is a $G(\mathbb Q_p)$-equivariant morphism of diamonds
\[
 \pi_\HT:X\longrightarrow\Fl_C^\diamond
\]
classifying the Hodge--Tate modification of the universal aperture
with its rational etale framing. Its underlying map is spectral.
\end{proposition}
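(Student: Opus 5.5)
To construct $\pi_\HT$ I would avoid any universal abelian scheme and work purely with the $G^c(\mathbb Z_p)$-local system $\mathrm{Et}_{K,p}$ together with the syntomic realization $\varpi$ of Definition~\ref{def:integral_canonical_model}. The first step is to trivialize the local system at infinite level. Pulled back to $X$, the tower of finite étale covers defining $S_{\infty,C}$ trivializes $\mathrm{Et}_{K,p}$. This gives a canonical isomorphism $\mathrm{Et}_{K,p}|_X\simeq G^c(\mathbb Z_p)\times X$, hence a trivial $G(\mathbb Q_p)$-torsor with an action of $G(\mathbb Q_p)$ through right translation of the frame. This is the ``rational étale framing''. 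Equivariance will then be formal, because $g\in G(\mathbb Q_p)$ acts on $X$ through the Hecke correspondences, and by construction these change the rational frame by $g$.

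The second step produces the Hodge--Tate filtration. Over any affinoid perfectoid $T=\Spa(Q,Q^+)\to X$, the map to finite level extends integrally on a cover by Definition~\ref{def:integral_canonical_model}(2) together with properness of $\cS$. Equivalently, $T\to S^\diamond$ factors through the generic fiber of $\widehat{\cS}$, so the composite with $\varpi$ gives a $(G^c,-\mu_v)$-aperture over $Q^+$. For such an aperture I would take the Hodge--Tate specialization of its prismatic $F$-gauge along the Fontaine/$A_{\inf}$ prism of $Q^+$. The comparison between its étale realization and its Hodge--Tate realization yields a $P_\mu$-reduction of the $G$-torsor $\mathrm{Et}\otimes Q$. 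Combined with the framing, this reduction is a $T$-point of $\Fl=G/P_\mu$. The boundedness condition in the definition of apertures ensures the type is $\mu$ at every geometric point. Compatibility with pullback in $T$ then gives a morphism of v-sheaves $X\to\Fl_C^\diamond$.

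The last step is to check that the map is spectral. Since $X$ is spatial and $\Fl_C^\diamond$ is the diamond of a proper adic space, the map $|X|\to|\Fl_C|$ is quasicompact: preimages of quasicompact opens are quasicompact because $|X|$ is compact in the constructible topology and $|\pi_\HT|$ is continuous. I would verify this by showing that rational subsets of the flag variety pull back to finite unions of subsets cut out by finitely many inequalities of bounded functions at some finite level.

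I expect the main obstacle to be the second step: making the Hodge--Tate filtration of an aperture functorial over arbitrary perfectoid test objects and showing it agrees with the étale framing up to the correct Tate twist. Here I would first reduce to geometric points and to perfectoid valuation rings using v-descent. Over those I would invoke the crystalline/prismatic comparison implicit in \cite{MY2026}, which identifies $T_{\mathrm{\acute et}}$ with the étale realization of the $F$-gauge.
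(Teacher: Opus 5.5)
Your construction of $\pi_\HT$ is a reasonable variant of the paper's. The paper simply applies the Pappas--Rapoport construction, which attaches to a trivialization of the crystalline (hence de Rham) local system its $B_{\mathrm{dR}}^+$-lattice. You instead pass through the $A_{\inf}$-realization of the aperture over perfectoid test objects. Two remarks on your route:
\begin{itemize}
\item The integral extension comes from properness alone, since $S$ is the generic fibre of $\widehat{\cS}$. The pointwise criterion of Definition~\ref{def:integral_canonical_model}(2) concerns discretely valued fields and is not what you need.
\item More substantively, equivariance is not formal here. A Hecke translate by $g\in G(\mathbb Q_p)$ changes the $\mathbb Z_p$-lattice, hence the finite-level point and its aperture. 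You must therefore show that the resulting filtration of the rational \'etale local system does not depend on the lattice. The paper gets this for free because its construction only uses the rational de Rham local system. You would need an isogeny-invariance statement, for instance by comparing your filtration with the de Rham lattice.
\end{itemize}

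The genuine gap is the spectrality step. You name the right ingredients (quasicompact source, proper target), but the mechanism you give is invalid. Continuity of $|\pi_\HT|$ together with constructible compactness of $|X|$ does not imply that preimages of quasicompact opens are quasicompact; a continuous map of spectral spaces need not be spectral. For example, map $\operatorname{Spec}k[x_1,x_2,\dots]$ to the Sierpi\'nski space (the spectrum of a DVR) by sending the complement of the closed point $(x_1,x_2,\dots)$ to the open point. The preimage of that quasicompact open point is $\bigcup_i D(x_i)$, which is not quasicompact.

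Your fallback is to show that rational subsets of $\Fl_C$ pull back to inequalities among bounded functions at some finite level. That amounts to a finite-level approximation of the period map, of the kind Scholze proves using the perfectoid structure at infinite level. Before perfectoidness of $X$ is known this is unavailable, so the argument would be circular.

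The missing idea is to argue with diamonds rather than with topological spaces:
\begin{itemize}
\item Since $\Fl_C^\diamond$ is separated, the graph $X\to X\times_{\operatorname{Spd}C}\Fl_C^\diamond$ is a closed immersion, hence quasicompact.
\item The projection $X\times_{\operatorname{Spd}C}\Fl_C^\diamond\to\Fl_C^\diamond$ is quasicompact, as a base change of the quasicompact map $X\to\operatorname{Spd}C$.
\end{itemize}
Thus $\pi_\HT$ is quasicompact, and $\pi_\HT^{-1}(U)$ is a quasicompact open subdiamond for every quasicompact open $U\subset\Fl_C$. Hence $|\pi_\HT|$ is spectral; this is exactly the paper's argument.
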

\begin{proof}
The universal aperture realizes the canonical crystalline, hence de
Rham, $K$-local system. The construction of
\cite[Proposition 2.6.3 and \S2.6.2, especially (2.6.6)]{PR2024}
associates to a trivialization of that local system its
$B_{\mathrm{dR}}^+$-lattice. For a minuscule conjugacy class the
corresponding Schubert cell is the flag variety. This gives the
map on perfectoid test objects and hence on the full-level diamond.
The construction is tensorial and functorial for rational changes
of framing, so it is $G(\mathbb Q_p)$-equivariant. In particular,
the construction does not require representability of $X$ by a
perfectoid space; see also~\cite[Remark 2.6.7]{PR2024}.

A morphism to the separated analytic flag variety from this
quasicompact spatial diamond is quasicompact: its graph is a closed
immersion, and projection from $X\times\Fl_C^\diamond$ to the second
factor is quasicompact. Thus inverse images of quasicompact opens
are quasicompact opens. The induced continuous map is spectral.
\end{proof}

\subsection{The ordinary period orbit and contraction}

Let $\chi$ be the dominant representative of the cocharacter defining
the Hodge--Tate flag variety, with its actual filtration convention,
and write $\Fl_C=G_C/P_{\chi,C}$. Here $P_\chi$ denotes the parabolic
of nonnegative weights. Choose an unramified
maximal torus in a rational Borel and choose $\chi$ in its dominant
chamber. With $d=[E:\mathbb Q_p]$ as before, put
\[
 \lambda=\sum_{i=0}^{d-1}\sigma^i\chi\in X_*(T)^{\sigma},
 \qquad P=P_\lambda,\qquad f_0=P_\chi\in\Fl(E).
\]
The canonical torus-valued ordinary aperture, constructed by the
Lubin--Tate reflex norm, has period $f_0$ in a compatible framing;
see~\cite[Remark 3.6.19]{MY2026}.
Define the ordinary period locus
\[
 \Omega=G(\mathbb Q_p)f_0\subset|\Fl_C|.
\]
For a rational Hodge cocharacter this is the usual rational flag
locus. In general one must use this orbit, rather than write
$\Fl(\mathbb Q_p)$ for a flag variety whose type need not be rational.
For the covering argument it suffices to use this explicit orbit;
no comparison theorem for all Newton strata is needed.

The conjugates of $\chi$ are dominant in the same chamber. The root
criterion for their parabolics gives
\[
 \bigcap_{i=0}^{d-1}P_{\sigma^i\chi}=P_\lambda.
\]
A rational element fixing $f_0$ belongs to every conjugate parabolic.
Consequently
\begin{equation}\label{eq:ordinary-orbit}
 \Omega=G(\mathbb Q_p)/P(\mathbb Q_p).
\end{equation}
This is a compact space: the right side is the rational-point space
of the projective rational flag variety $G/P$ (equivalently, use
Iwasawa decomposition). Its map to $|\Fl_C|$ is continuous.
Every point of $\Omega$ is a classical $C$-point and has no proper
generizations in the analytic adic space.

\begin{lemma}[Rational translates of an ordinary flag neighborhood]
\label{lem:flag-cover}
If $U\subset\Fl_C$ is an analytic open subset with $U\cap\Omega\ne\varnothing$,
then
\[
 \Fl_C=\bigcup_{g\in G(\mathbb Q_p)}gU.
\]
Finitely many translates suffice.
\end{lemma}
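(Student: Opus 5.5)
Since $U$ meets $\Omega$ and $\Omega$ is a single $G(\mathbb Q_p)$-orbit, the rational translates $gU$ cover $\Omega$. The plan is to spread this covering to all of $\Fl_C$ with a contraction argument in the style of Scholze's covering of the Siegel flag variety: a suitable element of $G(\mathbb Q_p)$ pushes every point of the flag variety toward the closed orbit $\Omega$. Fix a point $f_1=g_1f_0\in U\cap\Omega$. Replacing $U$ by $g_1^{-1}U$, we may assume $f_0\in U$.

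\textbf{Step 1: attracting cell.} Consider the rational cocharacter $\lambda$ and the element $a=\lambda(p)\in T(\mathbb Q_p)$. Choose the sign so that $f_0$ is the attracting fixed point of iteration of $a$, i.e.\ the inverse weights on the opposite unipotent radical of $P_\chi$ are contracted. Let $\overline N$ be the unipotent radical of the parabolic opposite to $P_\chi$. The big cell $\overline N f_0\cong\overline N$ is an open dense Zariski neighbourhood of $f_0$. Every weight of $\lambda$ on $\mathrm{Lie}\,\overline N$ has strict sign, because it is a sum of the $\sigma^i\chi$-weights, all of which have the same sign by dominance in one chamber and at least one of which (the $\chi$-weight) is nonzero. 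Hence $a^n$ acts on $\overline N(C)$ by scaling coordinates by $p^{nk}$ with $k>0$, uniformly in $n$. Since $U$ contains a small affinoid ball around $f_0$ in these coordinates, for every quasicompact subset $Z$ of the big cell there is an $n$ with $a^nZ\subset U$. Thus $\overline N f_0\subset\bigcup_n a^{-n}U$.

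\textbf{Step 2: translating the big cell.} Show that $\Fl_C=\bigcup_{g\in G(\mathbb Q_p)} g\,\overline N f_0$ as adic spaces. The complement of $g\overline Nf_0$ is the Zariski-closed divisor $g\,D$, where $D$ is the complement of the opposite big cell. The intersection $\bigcap_{g\in G(\mathbb Q_p)}gD$ is a $G(\mathbb Q_p)$-stable Zariski-closed subset. Because $G(\mathbb Q_p)$ is Zariski dense in $G_C$ (unirationality of reductive groups over infinite fields), this intersection is $G_C$-stable, hence empty. Noetherianity then gives finitely many $g_i$ with $\bigcap_i g_iD=\varnothing$. The same conclusion holds on the adic space, since these are analytifications of Zariski opens.

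\textbf{Step 3: quasicompactness.} Each $g_i\overline Nf_0$ is not quasicompact, so Step 1 cannot be applied to it directly. Instead I would refine: cover $\Fl_C$ by finitely many quasicompact opens $V_j\subset g_{i(j)}\overline N f_0$, for instance closed unit polydiscs in coordinates after rescaling by the torus. This is possible because $\Fl_C$ is quasicompact and the open sets $g_i\,\overline N f_0$ are exhausted by increasing polydiscs. Applying Step 1 to each $g_{i(j)}^{-1}V_j$ produces one exponent $n_j$ with $V_j\subset g_{i(j)}a^{-n_j}U$. This gives finitely many translates and proves the lemma.

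The main obstacle is Step 1's uniform contraction. One must make sure the chosen sign of $\lambda$ really contracts all of $\mathrm{Lie}\,\overline N$ toward $f_0$, given the paper's filtration conventions and the use of the sum $\lambda$ of Galois conjugates rather than $\chi$ itself. If $\lambda$ had zero weights on part of $\overline N$, I would fall back on replacing $\overline N$ by the opposite unipotent radical of $P_\lambda$. I would then use the compactness of $\Omega=G(\mathbb Q_p)/P(\mathbb Q_p)$ to handle the leftover Levi directions: cover $\Omega$ by finitely many translates, then contract onto them.
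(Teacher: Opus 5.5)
Your proposal is correct and follows essentially the same route as the paper. Both arguments use three ingredients:
\begin{itemize}
\item contraction of the opposite big cell toward $f_0$ by the rational element $\lambda(p)^{\mp1}$, with strict weights coming from dominance of all $\sigma^i\chi$ in one chamber;
\item Zariski density of $G(\mathbb Q_p)$, to cover $\Fl_C$ by rational translates of that cell;
\item quasicompactness of $\Fl_C$, to get finitely many translates.
\end{itemize}
The differences are cosmetic. You treat the cell-covering at the scheme level via Noetherianity, where the paper argues pointwise over completed residue fields. Your Step 3 can be replaced by simply taking a finite subcover of the open cover $\{gU\}$. The fallback in your last paragraph is unnecessary, because Step 1 already establishes the strict sign of the weights.
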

\begin{proof}
Translate $U$ so that it contains $f_0$. Let $\mathcal B$ be the
opposite big cell $U_\chi^-f_0$ in $G_C/P_{\chi,C}$. On a root
coordinate $z_\alpha$ of this cell one has
$\langle\alpha,\chi\rangle<0$. Since all conjugates of $\chi$ are
dominant, $\langle\alpha,\lambda\rangle<0$ as well. Thus the
\emph{rational} element $b=\lambda(p)^{-1}\in G(\mathbb Q_p)$ acts by
\[
 z_\alpha\longmapsto p^{e_\alpha}z_\alpha,
 \qquad e_\alpha=-\langle\alpha,\lambda\rangle>0.
\]
These coordinates can be chosen after a finite splitting extension,
inside $C$. The minuscule opposite radical is a vector group; the
same conclusion also follows using ordered root coordinates.
Every analytic neighborhood of $f_0$ contains a sufficiently small
closed polydisc in this cell. Every point of $\mathcal B^{\an}$ lies
in some bounded polydisc. It follows that for each such point $y$
one has $b^Ny\in U$ for sufficiently large $N$. This argument applies
to higher-rank analytic points as well, since the inequalities
are uniform on bounded polydiscs. Hence
$\mathcal B^{\an}\subset\bigcup_{N\geq0}b^{-N}U$.

Finally the $G(\mathbb Q_p)$-translates of $\mathcal B^{\an}$ cover
$\Fl_C$. To see this at any analytic point $y$, pass to its completed
residue field. The elements carrying $y$ into $\mathcal B$ form a
nonempty Zariski open in the base change of $G$. The rational points
$G(\mathbb Q_p)$ are Zariski dense after any field extension: a
polynomial with extended coefficients vanishing on all rational
points can be expanded in a finite linearly independent set of
coefficients over $\mathbb Q_p$, reducing to ordinary Zariski density.
The latter follows from smoothness and a $p$-adic analytic neighborhood
of the identity. Thus a rational translate moves $y$ into the cell.
The two assertions together prove the covering. Quasicompactness of
the analytic projective flag variety gives a finite subcover.
\end{proof}

\subsection{Ordinary period fibers and constructible compactness}

\begin{lemma}[Ordinary fibers over valued fields]
\label{lem:ordinary-fibers}
Let $D/C$ be an algebraically closed complete rank-one extension and
let $x\in X(D)$, with hyperspecial image extending to
$\bar x\in\cS(\mathcal O_D)$. Then
\[
 \pi_\HT(x)\in\Omega
 \quad\Longleftrightarrow\quad
 \bar x\bmod\mathfrak m_D\text{ is }\mu\text{-ordinary}.
\]
In the left side $\Omega$ is viewed in $\Fl(D)$ by base change.
For every positive $r<1$, this implies the containment on all adic
points
\begin{equation}\label{eq:ordinary-fiber-containment}
 |\pi_\HT|^{-1}(\Omega)\subset|\operatorname{pr}_K^{-1}(S(r)_C^\diamond)|.
\end{equation}
\end{lemma}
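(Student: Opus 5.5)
We plan to prove the equivalence by reading off the Hodge--Tate period from the parabolic reduction that the ordinary locus carries, and to get the converse from a relative-position argument; the containment then follows directly.

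\textbf{The implication from ordinary reduction to $\Omega$.} Suppose $\bar x\bmod\mathfrak m_D$ is $\mu$-ordinary. Since the ordinary locus is open in the $p$-adic completion, the whole $\mathcal O_D$-point $\bar x$ factors through $\widehat{\cS}^{\mathrm{ord}}$. By Proposition~\ref{prop:ord_iso}, the $G$-aperture $\varpi(\bar x)$ then has a canonical reduction to $P^+_\nu$. Its \'etale realization is a $P^+_\nu(\mathbb Z_p)$-local system (Remark~\ref{rem:analytic_frob}), and its graded pieces are those of the torus-valued aperture.

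We will use functoriality of the period construction of Proposition~\ref{prop:period-map} for the inclusion $P^+_\nu\subset G$. Choose the rational framing of $x$ compatible with the parabolic reduction. Then the Hodge--Tate filtration is the image of a $P^+_\nu$-valued period. This period lies in the closed $P^+_\nu$-orbit of $f_0$, because the Levi part is that of the canonical torus aperture, whose period is $f_0$ by \cite[Remark 3.6.19]{MY2026}. The unipotent radical of $P_\lambda=P^+_\nu$ fixes $f_0$, since $\bigcap_i P_{\sigma^i\chi}=P_\lambda\subset P_\chi$. Changing the framing moves the period by $G(\mathbb Q_p)$, so $\pi_\HT(x)\in G(\mathbb Q_p)f_0=\Omega$.

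\textbf{The converse: a point with $\pi_\HT(x)\in\Omega$ reduces to the ordinary locus.} After a $G(\mathbb Q_p)$-change of framing we may assume $\pi_\HT(x)=f_0$. By \eqref{eq:ordinary-orbit}, the stabilizer of this point in $G(\mathbb Q_p)$ is $P(\mathbb Q_p)$. The first step is to deduce that the rational framing refines to a $P$-reduction of the rational local system. The second, and harder, step is to show that this $P$-reduction comes from an integral $P^+_\nu$-reduction of the aperture over $\mathcal O_D$.

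\textbf{Main obstacle.} The implication from $\Omega$ to ordinarity is where I expect the real difficulty. My first attempt will avoid the integral reduction and compare Newton and Hodge data directly. The Hodge--Tate filtration being in relative position $f_0$ with respect to a $p$-adic rational flag forces the crystalline Frobenius slopes to equal $\nu$. One can see this by computing the Frobenius on the Breuil--Kisin/prismatic realization over $\mathcal O_D$, or via the Fargues--Fontaine curve: a modification of the trivial bundle at a point lying in a $\mathbb Q_p$-rational $P$-orbit gives a $G$-bundle reducing to $P$ with slopes $\nu$. The Newton point of the special fiber is then $\nu$, the $\mu$-ordinary Newton point, and for $G$-zips of hyperspecial type, $\mu$-ordinary Newton point means $\mu$-ordinary stratum.

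\textbf{The containment.} If $\bar x\bmod\mathfrak m_D$ is ordinary, then $\bar h$ is a unit at the special point, so $|h(x)|=1$ and $w(\operatorname{pr}_K x)=0\le r$. This gives \eqref{eq:ordinary-fiber-containment} on rank-one points. For a general adic point of $|X|$, pass to its maximal generization. The set $\operatorname{pr}_K^{-1}(S(r))$ is closed under specialization inside the $K$-saturated opens. Moreover, every rank-one point extends integrally by properness of $\cS$ and the pointwise criterion of Definition~\ref{def:integral_canonical_model}, and $\Omega$ consists of classical points without proper generizations. These facts reduce the general case to the rank-one case.
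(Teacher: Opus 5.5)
Your argument for the equivalence is essentially sound. The genuine gap is in extending \eqref{eq:ordinary-fiber-containment} from rank-one points to all adic points. That step rests on a false claim: you assert that $\operatorname{pr}_K^{-1}(S(r)_C^\diamond)$ is closed under specialization, and it is not. $S(r)=\{|h|\geq|p|^r\}$ is a rational open, and opens are stable only under generization. If $y=\Spa(D,\mathcal O_D)$ has $h(y)=p^ru$ with $u\in\mathcal O_D^\times$, then any specialization $y'=\Spa(D,D^+)$ with $\bar u$ in the maximal ideal of $D^+/\mathfrak m_D$ has $|h(y')|<|p|^r$.

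A symptom of the problem is that your argument never uses $r>0$, so it would also give the containment for $r=0$, which the paper says can fail. Even when $\bar x\bmod\mathfrak m_D$ is ordinary, the unit $\bar h(\bar x)\in k_D^\times$ can lie in the maximal ideal of $D^+/\mathfrak m_D$, and then $|h(y')|<1$. The correct step is the paper's:
\begin{itemize}
\item Choose a formal chart containing the centre of $y'$; it then also contains the centre of the rank-one generization $y$.
\item Note that $\pi_\HT(y)=\pi_\HT(y')\in\Omega$, because points of $\Omega$ have no proper generizations. Hence $|h(y)|=1$ by the rank-one case.
\item The value $|h(y')|$ maps to $|h(y)|=1$ under the quotient of the value group by a convex subgroup, so it lies in that convex subgroup.
\item Because $r>0$, $|p|^r$ is not in that subgroup, so $|h(y')|>|p|^r$.
\end{itemize}
This is exactly where positivity of the radius enters.

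On the equivalence itself, your direction $\Omega\Rightarrow$ ordinary is the paper's argument in different language. You modify the trivial bundle on the Fargues--Fontaine curve. The paper instead uses \cite[Theorem 14.1.1]{ScholzeWeinstein2020} to identify the rational Breuil--Kisin--Fargues object with that of $\mathfrak Q_0$, then specializes to the $G$-isocrystal. Your final step, ``ordinary Newton point iff ordinary reduction,'' is what the paper takes from \cite[Lemma 3.6.14]{MY2026}.

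For ordinary $\Rightarrow\Omega$ you take a genuinely different route. The paper shows that every ordinary aperture over $\mathcal O_D$ is isomorphic to $\mathfrak Q_0$: the \'etale Levi part and the finite flat torsors at each truncation are trivial over $\mathcal O_D$, and compatible choices exist by finiteness. So the period lies in $Kf_0$ with no computation. You instead compute the period in a framing adapted to the $P^+_\nu$-reduction. For ``the Levi part is that of the torus aperture'' to yield ``the period lies in $P(C)f_0=\{f_0\}$,'' functoriality alone is not enough. The fibre of the $P$-affine Grassmannian over the Levi one is a $U_P(B_{\mathrm{dR}})$-orbit, and $U_P(B_{\mathrm{dR}})$ does not act through $\Fl$. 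You need to supply three things:
\begin{itemize}
\item the Hodge--Tate modification has type $\mu$ as a $P$-object, so its filtration is split by a cocharacter of $P_C$;
\item cocharacters of $P_C$ with the same central Levi image are $U_P(C)$-conjugate;
\item in your framing, $P^+_\nu$ matches $P_\lambda$ rather than its opposite.
\end{itemize}
The last is a convention check that the paper's isomorphism argument avoids.
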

\begin{proof}
Properness gives the extension $\bar x$. Pull its aperture back to
$\mathcal O_D$ and use its prismatic realization. In every algebraic
representation of $G$, this gives a finite free Breuil--Kisin--Fargues
module, with the compatible tensor data. Its etale lattice is the
one trivialized by $x$, and its $B_{\mathrm{dR}}^+$-lattice is the
modification classified by $\pi_\HT(x)$. The comparison with this
modification follows either from the crystalline comparison or from
the shtuka realization of the prismatic crystal; see
\cite[\S\S3.8--3.9 and the construction in the proof of
Proposition 8.7.9]{MY2026} and~\cite[\S2.6]{PR2024}.

Over $\mathcal O_D$, the classification
\cite[Theorem 14.1.1, parts (2) and (5)]{ScholzeWeinstein2020}
identifies a Breuil--Kisin--Fargues module with its etale lattice and
its $B_{\mathrm{dR}}^+$-lattice. Applying the equivalence to every
representation, compatibly with tensor products, gives the analogous
identification for the $G$-object. After inverting $p$ the datum is a
rational etale framing and a $G$-modification. The flag records this
modification because it has the fixed minuscule bound. This statement
concerns the $G$-modification itself; weights of an individual
representation need not all lie in two consecutive degrees.

Let $\mathfrak Q_0$ be the canonical torus-valued ordinary aperture
with period $f_0$. If $\pi_\HT(x)=gf_0$ for $g\in G(\mathbb Q_p)$,
then $g$ identifies the rational etale spaces and their
$B_{\mathrm{dR}}^+$-lattices. The classification just recalled gives
an isomorphism of rational Breuil--Kisin--Fargues objects, respecting
all tensors, between $\mathfrak Q_{\bar x}$ and $\mathfrak Q_0$.
Specializing their prismatic realizations along
$A_{\inf}(\mathcal O_D)\to W(k_D)$ and inverting $p$ gives
isomorphic $G$-isocrystals. This is the crystalline specialization
compatible with the etale and de Rham realizations
\cite[\S\S3.8--3.9]{MY2026}; compare the vector-bundle case
\cite[Corollary 14.8.2]{ScholzeWeinstein2020}.
The isocrystal of $\mathfrak Q_0$ is the ordinary class.
By~\cite[Lemma 3.6.14]{MY2026}, this is equivalent to ordinary
reduction of the aperture, hence of $\bar x$.

For the converse, an ordinary aperture has the canonical parabolic
reduction of~\cite[Proposition 3.6.12]{MY2026}. Its Levi aperture is
etale, and at each truncation the remaining datum is a torsor under
the finite flat group of~\cite[Proposition 3.6.8]{MY2026}. Every
finite flat torsor over $\mathcal O_D$ is trivial: it has a $D$-point,
since $D$ is algebraically closed, and that point extends by properness
of a finite morphism. The Levi torsors are trivial for the same
reason. Thus each truncation is isomorphic to the corresponding
truncation of $\mathfrak Q_0$. The sets of such isomorphisms are finite
and nonempty, and form an inverse system; compactness of finite sets
gives a compatible choice. The full apertures are consequently
isomorphic. Their periods differ by the change of etale framing,
which belongs to $K$, and in particular the period of $x$ lies in
$\Omega$.

We finally justify~\eqref{eq:ordinary-fiber-containment} for higher-rank
points. Represent a point above $\Omega$ by
$\operatorname{Spa}(D,D^+)$, with $D$ algebraically closed and $D^+$
a valuation subring of $\mathcal O_D$. Restriction to
$\operatorname{Spa}(D,\mathcal O_D)$ gives its rank-one generization.
Its period is still the same point of $\Omega$, so its hyperspecial
image has ordinary reduction by the preceding argument. Choose a
formal chart containing the specialization of the original point
and a local Hasse lift $h$. The rank-one valuation of $h$ is zero.
In the refined valuation its value may differ infinitesimally from
one, but it is strictly larger than $|p|^r$ for every $r>0$.
Consequently the original point belongs to $S(r)$ as well.
We have deliberately used a positive-radius neighborhood: equality
with the ordinary tube itself need not hold at higher-rank points.
\end{proof}

\begin{lemma}[A neighborhood from constructible compactness]
\label{lem:constructible-neighborhood}
Let $f:T\to F$ be a spectral map of spectral spaces and let
$A\subset F$ be compact, with each of its points having no proper
generizations. If $V\subset T$ is open and $f^{-1}(A)\subset V$,
then there is a quasicompact open $U\subset F$ such that
\[
 A\subset U,\qquad f^{-1}(U)\subset V.
\]
\end{lemma}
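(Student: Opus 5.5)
The plan is a constructible-topology compactness argument. For each point $a\in A$, I will use the absence of proper generizations to show that the quasicompact open neighbourhoods of $a$ have intersection exactly $\{a\}$. Then $f^{-1}(a)$ is the intersection of the sets $f^{-1}(U)$, and compactness lets a single $U$ work near $a$. Finitely many such $U$, glued by compactness of $A$, will give the required neighbourhood.

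Step 1 is the point computation. In a spectral space the intersection of all open neighbourhoods of a point $a$ is the set of generizations of $a$. Since quasicompact opens form a basis, this is the same as the intersection of all quasicompact open neighbourhoods. By hypothesis this intersection is $\{a\}$.

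Step 2 uses the constructible topology. The map $f$ is spectral, so $f^{-1}(U)$ is constructible, hence closed in $T_{\mathrm{cons}}$. The set $T\setminus V$ is closed, hence closed in the finer constructible topology, and $T_{\mathrm{cons}}$ is compact Hausdorff. The family of sets $(T\setminus V)\cap f^{-1}(U)$, with $U$ a quasicompact open neighbourhood of $a$, is closed under finite intersections. Its total intersection is $(T\setminus V)\cap f^{-1}(a)$, which is empty by the hypothesis $f^{-1}(A)\subset V$. By compactness some single member is empty, so there is a quasicompact open $U_a\ni a$ with $f^{-1}(U_a)\subset V$.

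Step 3 covers $A$. The sets $U_a$ cover $A$. I will read ``compact'' as quasicompact in the induced topology of $F$, which holds in the application because $\Omega$ is the continuous image of a compact space. Then finitely many $U_{a_1},\dots,U_{a_k}$ cover $A$. Put $U=\bigcup_i U_{a_i}$. It is a finite union of quasicompact opens, so it is quasicompact open, and $f^{-1}(U)=\bigcup_i f^{-1}(U_{a_i})\subset V$.

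The only delicate point is Step 1, namely identifying the intersection of the open neighbourhoods of $a$ with its generizations. This is standard for spectral spaces: a point $y$ lying in every open neighbourhood of $a$ means $a\in\overline{\{y\}}$, so $y$ is a generization of $a$. It is exactly here that the hypothesis on $A$ enters. Without it, $f^{-1}$ of a generization of $a$ could escape $V$, and the compactness argument in Step 2 would fail.
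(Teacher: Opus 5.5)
Your proof is correct and follows essentially the same approach as the paper. Both use the same three ingredients: the no-generization hypothesis makes quasicompact open neighbourhoods shrink to the point, $T\setminus V$ is compact in the constructible topology, and $A$ is quasicompact. The only difference is the order: the paper first uses quasicompactness of $A$ to show that the quasicompact open neighbourhoods of $A$ intersect to $A$ and then applies constructible compactness once, whereas you apply constructible compactness at each $a\in A$ and then cover $A$ by finitely many $U_a$.
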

\begin{proof}
The intersection of all quasicompact open neighborhoods of $A$ is
$A$. Indeed, for $z\notin A$ and each $a\in A$, the condition on
generizations gives a quasicompact open containing $a$ but not $z$.
A finite number of these cover $A$, so their union is a
quasicompact open neighborhood avoiding $z$.

The complement $B=T\setminus V$ is closed, hence compact, for the
constructible topology: every spectral open is constructibly open.
For each quasicompact open neighborhood $U$ of $A$, the set
$B\cap f^{-1}(U)$ is constructibly closed, since $f$ is spectral.
Their intersection is $B\cap f^{-1}(A)=\varnothing$. Constructible
compactness gives a finite subfamily with empty intersection.
Intersecting the corresponding neighborhoods gives the required $U$.
\end{proof}

\begin{proposition}[The ordinary flag neighborhood]
\label{prop:period-neighborhood}
For the fixed positive radius $\eps$ of Theorem~\ref{thm:tower},
there is a quasicompact analytic open $U\subset\Fl_C$ such that
\begin{equation}\label{eq:period-neighborhood}
 \Omega\subset U,\qquad \pi_\HT^{-1}(U)\subset\mathcal N_\eps.
\end{equation}
In particular $\pi_\HT^{-1}(U)$ is represented by a perfectoid space.
\end{proposition}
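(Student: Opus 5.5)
The plan is to apply Lemma~\ref{lem:constructible-neighborhood} with $T=|X|$, $F=|\Fl_C|$, $f=|\pi_\HT|$, $A=\Omega$ and $V=|\mathcal N_\eps|$. The conclusion of that lemma is then exactly \eqref{eq:period-neighborhood}. So the work is to check each hypothesis in turn, and then to deduce perfectoidness of $\pi_\HT^{-1}(U)$ from Proposition~\ref{prop:saturation}.

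\textbf{The spaces and the map.} $|X|$ is spectral because $X$ is a spatial diamond, as recalled at the start of Section~\ref{sec:period}. $|\Fl_C|$ is spectral because $\Fl_C$ is a quasicompact quasiseparated analytic adic space. Proposition~\ref{prop:period-map} shows that $|\pi_\HT|$ is a spectral map.

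\textbf{The set $A=\Omega$.} By \eqref{eq:ordinary-orbit}, $\Omega$ is the continuous image of the compact space $G(\mathbb Q_p)/P(\mathbb Q_p)$, so it is compact. It was also observed there that every point of $\Omega$ is a classical $C$-point with no proper generizations.

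\textbf{The open $V$.} $\mathcal N_\eps=\operatorname{pr}_K^{-1}(S(\eps)_C^\diamond)$ is not obviously open as written, since $S(\eps)$ is a closed Laurent-type domain. Two routes are available here.
\begin{itemize}
\item One can use that Proposition~\ref{prop:saturation} shows $\mathcal N_\eps$ is an open subdiamond. The description \eqref{eq:K-saturation} expresses it as a union of translates of the open $W$.
\item Alternatively, one can shrink the target. Fix $0<r<\eps$ and put $V=\operatorname{pr}_K^{-1}\bigl(|\{w<\eps\}|\bigr)$, which is genuinely open and contains $\operatorname{pr}_K^{-1}(S(r))$.
\end{itemize}
In either case the inclusion $f^{-1}(\Omega)\subset V$ comes from \eqref{eq:ordinary-fiber-containment} of Lemma~\ref{lem:ordinary-fibers}, applied with radius $r$ (or $\eps$). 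That lemma already handles the higher-rank points, which is precisely why it was stated with a positive radius. Lemma~\ref{lem:constructible-neighborhood} then yields a quasicompact open $U\supset\Omega$ with $\pi_\HT^{-1}(U)\subset V\subset\mathcal N_\eps$.

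\textbf{Perfectoidness.} $\pi_\HT^{-1}(U)$ is an open subdiamond of the perfectoid space $\mathcal N_\eps$ from Proposition~\ref{prop:saturation}. An open subdiamond of a perfectoid space is represented by the corresponding open subspace, which is again perfectoid.

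The main obstacle I expect is the openness bookkeeping in the choice of $V$. The constructible-compactness lemma needs $V$ to be open in $|X|$, while the natural neighborhoods $S(r)$ are closed conditions $|h|\ge |p|^r$. I would handle this with the strict inequality $\{w<\eps\}$, which is open since $w$ is continuous on the Berkovich space, and pull it back along the continuous map $|X|\to|S_C|$. The fact that points of $\Omega$ have no proper generizations, which the lemma requires, is already provided in the paper.
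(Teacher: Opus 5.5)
Your proposal is correct and is essentially the paper's proof: apply Lemma~\ref{lem:constructible-neighborhood} to $|\pi_{\HT}|\colon|X|\to|\Fl_C|$ with $A=\Omega$ and $V=|\mathcal N_\eps|$, take the fiber containment from Lemma~\ref{lem:ordinary-fibers}, and get perfectoidness from Proposition~\ref{prop:saturation}. The openness of $V$ is not actually an obstacle, because $S(\eps)=\{|h|\ge|p|^\eps\}$ is a rational, hence open, subset of the adic space, so your first route is exactly what the paper does; your shrunken $\{w<\eps\}$ variant is also valid but unnecessary.
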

\begin{proof}
Apply Lemma~\ref{lem:constructible-neighborhood} to
$|\pi_\HT|:|X|\to|\Fl_C|$, $A=\Omega$, and
$V=|\mathcal N_\eps|$. Spectrality and constructible compactness were
established before Proposition~\ref{prop:period-map}, and the fiber
containment is Lemma~\ref{lem:ordinary-fibers}. The orbit $\Omega$
is compact and consists of points without proper generizations,
as observed after~\eqref{eq:ordinary-orbit}.
Quasicompact opens of the analytic flag variety define analytic
open subspaces, giving~\eqref{eq:period-neighborhood}.
Proposition~\ref{prop:saturation} makes $\mathcal N_\eps$ perfectoid;
its open subdiamond $\pi_\HT^{-1}(U)$ is therefore perfectoid as well.
\end{proof}

\begin{theorem}[Perfectoidness at full $p$-level]
\label{thm:global}
Let $\cS/\OEv$ be a smooth proper limpid scheme model, with 
neat tame level $K^p$. Then
\[
 S_{\infty,C}=\ilim_{K'\subset K}(\Sh_{K^pK',C}^{\an})^\diamond
\]
is represented by a perfectoid space over $C$. It is covered by
finitely many $G(\mathbb Q_p)$-translates of the perfectoid
anticanonical open $W$. No separate geometric-tower or Hecke-covering
hypothesis is required.
\end{theorem}
\begin{proof}
Choose $U$ as in Proposition~\ref{prop:period-neighborhood}.
Lemma~\ref{lem:flag-cover} gives a finite cover of $\Fl_C$ by
translates $g_iU$, with $g_i\in G(\mathbb Q_p)$. Equivariance gives
\[
 X=\bigcup_{i=1}^s g_i\pi_\HT^{-1}(U)
   \subset\bigcup_{i=1}^s g_i\mathcal N_\eps\subset X.
\]
The middle union is therefore $X$. Its members are open perfectoid
subdiamonds by Proposition~\ref{prop:saturation}. Their overlaps are
open subspaces of perfectoid spaces, and the given diamond
identifications supply compatible gluing maps. Gluing yields a
perfectoid space representing $X$. Finally
\eqref{eq:K-saturation} replaces each $\mathcal N_\eps$ by finitely
many $K$-translates of $W$, proving the last assertion.
\end{proof}

\bibliography{myref}{}

@book{ScholzeWeinstein2020,
  title={Berkeley Lectures on {$p$}-adic Geometry},
  author={Scholze, Peter and Weinstein, Jared},
  series={Annals of Mathematics Studies},
  volume={207},
  year={2020},
  publisher={Princeton University Press}
}

@article{PR2024,
  title={{$p$}-adic {S}htukas and the Theory of Global and Local {S}himura Varieties},
  author={Pappas, Georgios and Rapoport, Michael},
  journal={Cambridge Journal of Mathematics},
  volume={12},
  number={1},
  pages={1--164},
  year={2024}
}

@article{ScholzeWeinstein2013,
  title={Moduli of {$p$}-Divisible Groups},
  author={Scholze, Peter and Weinstein, Jared},
  journal={Cambridge Journal of Mathematics},
  volume={1},
  number={2},
  pages={145--237},
  year={2013},
  publisher={International Press of Boston}
}

@misc{ScholzeDiamonds,
  title={{\'E}tale Cohomology of Diamonds},
  author={Scholze, Peter},
  year={2017},
  eprint={1709.07343},
  archivePrefix={arXiv}
}

@article{he2026perfectoidness,
  title={Perfectoidness via Sen theory and applications to Shimura varieties},
  author={He, Tongmu},
  journal={Journal of the American Mathematical Society},
  volume={39},
  number={1},
  pages={95--176},
  year={2026}
}

@misc{bhatt2022prismatic,
  title={Prismatic {$F$}-Gauges},
  author={Bhatt, Bhargav},
  year={2022},
  note={Lecture notes for MAT 549, Princeton University},
  url={https://www.math.ias.edu/~bhatt/teaching/mat549f22/lectures.pdf}
}

@book{BH1999,
  title={Metric Spaces of Non-Positive Curvature},
  author={Bridson, Martin R. and Haefliger, Andr{\'e}},
  series={Grundlehren der mathematischen Wissenschaften},
  volume={319},
  year={1999},
  publisher={Springer}
}

@misc{KW2018,
  title={Generalized {$\mu$}-ordinary {H}asse Invariants},
  author={Koskivirta, Jean-Stefan and Wedhorn, Torsten},
  year={2018},
  eprint={1406.2178},
  archivePrefix={arXiv}
}

@misc{MY2026,
  title={On Canonicity for Integral Models of {S}himura Varieties with Hyperspecial Level},
  author={Madapusi, Keerthi and Youcis, Alex},
  year={2026},
  note={Version 1},
  eprint={2604.06442},
  archivePrefix={arXiv}
}

@article{Scholze2012,
  title={Perfectoid Spaces},
  author={Scholze, Peter},
  journal={Publications Math{\'e}matiques de l'IH{\'E}S},
  volume={116},
  number={1},
  pages={245--313},
  year={2012},
  publisher={Springer}
}

@article{Scholze2015,
  title={On Torsion in the Cohomology of Locally Symmetric Varieties},
  author={Scholze, Peter},
  journal={Annals of Mathematics},
  volume={182},
  number={3},
  pages={945--1066},
  year={2015},
  publisher={Princeton University}
}

@incollection{Yu2009,
  title={{B}ruhat--{T}its Theory and Buildings},
  author={Yu, Jiu-Kang},
  booktitle={{O}ttawa Lectures on Admissible Representations of Reductive {$p$}-adic Groups},
  series={Fields Institute Monographs},
  volume={26},
  pages={53--77},
  year={2009},
  publisher={American Mathematical Society}
}
\bibliographystyle{alphaurl}

\end{document}